\DeclareMathAlphabet{\mathpzc}{OT1}{pzc}{m}{it}
\DeclareMathOperator*{\supp}{supp}
\DeclareMathOperator{\WF}{WF}
\def\WFh{\WF_h}
\DeclareMathOperator*{\loc}{loc}
\DeclareMathOperator*{\comp}{comp}
\DeclareMathOperator{\Ell}{ell}
\DeclareMathOperator*{\tr}{tr}
\newtheorem{theorem}{Theorem}
\numberwithin{prop}{section}
\numberwithin{corol}{section}
\newtheorem{lemma}{Lemma}
\numberwithin{lemma}{section}
\numberwithin{conjecture}{section}
\theoremstyle{definition}
\numberwithin{defin}{section}
\numberwithin{equation}{section}
\renewcommand{\Re}{\mathop{\rm Re}\nolimits}
\renewcommand{\Im}{\mathop{\rm Im}\nolimits}
\newcommand{\bl}{\begin{flushleft}}
\newcommand{\el}{\end{flushleft}}
\newcommand{\br}{\begin{flushright}}
\newcommand{\ert}{\end{flushright}}
\newcommand{\bc}{\begin{center}}
\newcommand{\ec}{\end{center}}
\newcommand{\numList}{\begin{enumerate}}
\newcommand{\enumList}{\end{enumerate}}
\newcommand{\e}{\varepsilon}
\newcommand{\re}{\mathbb{R}}
\newcommand{\la}{\langle}
\newcommand{\ra}{\rangle}
\DeclareMathOperator{\Op}{Op}
\DeclareMathOperator{\HS}{HS}
\DeclareMathOperator{\Vol}{Vol}
\newcommand{\mc}[1]{\mathcal{#1}}
\theoremstyle{remark}
\newtheorem{remark}{Remark}
\newcommand{\Cc}{C_c^\infty}
\renewcommand{\O}[1]{\mathcal{O}_{#1}}
\newcommand{\Ph}[2]{\Psi^{#1}_{h#2}}
\title[Fractal Weyl laws and wave decay for general trapping]%
{Fractal Weyl laws and wave decay\\ for general trapping}
\author{Semyon Dyatlov}
\email{dyatlov@math.mit.edu}
\address{Department of Mathematics, Massachusetts Institute of Technology,
Cambridge, MA, USA}
\author{Jeffrey Galkowski}
\email{jeffrey.galkowski@mcgill.ca}
\address{Department of Mathematics, McGill University, Montr\'eal, QC, Canada}
\begin{document}

\begin{abstract}
We prove a Weyl upper bound on the number of scattering resonances in strips for manifolds with Euclidean infinite ends.
In contrast with previous results, we do not make any strong structural assumptions
on the geodesic flow on the trapped set (such as hyperbolicity) and instead use propagation statements
up to the Ehrenfest time. By a similar method we prove a decay statement with high probability
for linear waves with random initial data. The latter statement is related
heuristically to the Weyl upper bound.
For geodesic flows with positive escape rate, we obtain a power improvement over the trivial Weyl bound
and exponential decay up to twice the Ehrenfest time.
\end{abstract}

\maketitle

\section{Introduction}

In this paper, we study asymptotics of scattering resonances and linear waves on
a $d$-dimensional noncompact Riemannian manifold $(M,g)$ with Euclidean infinite ends (see~\S\ref{s:Euclid}).
Resonances are the spectral data for the Laplacian on non-compact manifolds analogous to eigenvalues
in the compact setting. They are defined as poles of the meromorphic continuation of the $L^2$ resolvent
(see~\S\ref{s:resolvent-infinity})
\begin{equation}
  \label{e:resolvent}
R_g(\lambda)=(-\Delta_g-\lambda^2)^{-1}:\begin{cases}
L^2(M)\to L^2(M),&\Im\lambda>0,\\
L^2_{\comp}(M)\to L^2_{\loc}(M),&\lambda\in \mathbb C\setminus (-\infty,0].
\end{cases}
\end{equation}
Our results involve the structure of the homogeneous geodesic flow
\begin{equation}
  \label{e:geoflow}
\varphi_t=\exp(tH_p):T^*M\setminus 0\to T^*M\setminus 0,\qquad
p(x,\xi)=|\xi|_{g(x)}.
\end{equation}

\subsection{Weyl bounds}

Our first result is an upper bound on the number of resonances in strips,
\begin{equation}
  \label{e:NRB}
\mc{N}(R,\beta):=\#\{\lambda \in [R,R+1]+i[-\beta,0]\colon \lambda \text{ is a resonance}\},\qquad \beta\geq0,\quad
R\to\infty.
\end{equation}
We first state the following simple corollary of the main result:
\begin{theorem}
  \label{t:DG}
For all $\beta>0$ we have
\begin{equation}
  \label{e:DG-1}
\mathcal N(R,\beta)=\mathcal O(R^{d-1}).
\end{equation} 
Moreover, if the trapped set $K\subset T^*M\setminus 0$ of $\varphi_t$ has volume zero
(see~\eqref{e:gamma-def}), then
\begin{equation}
  \label{e:DG-2}
\mathcal N(R,\beta)=o(R^{d-1})\quad\text{as }R\to \infty.
\end{equation}
\end{theorem}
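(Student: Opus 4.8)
The plan is to first prove a refined version of the bound in which the right-hand side of~\eqref{e:DG-1} is multiplied by the measure of a shrinking, dynamically defined phase-space region attached to the trapped set, and then to read off both~\eqref{e:DG-1} and~\eqref{e:DG-2} from it by a short measure-theoretic argument. I would begin with the semiclassical rescaling $h=1/R$: resonances in $[R,R+1]+i[-\beta,0]$ correspond to semiclassical resonances of $P(h)=-h^2\Delta_g$ in an $\mathcal O(h)$-box around energy $1$ lying within $\mathcal O(\beta h)$ of the real axis. Complex scaling in the Euclidean ends turns these into eigenvalues near $1$ of a non-self-adjoint operator $P_\theta(h)$ with discrete spectrum, so that $\mathcal N(R,\beta)$ is bounded by the total algebraic multiplicity of the eigenvalues of $P_\theta(h)$ in that box, that is, by $\tr\Pi$ for the associated Riesz projector $\Pi$.

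The heart of the matter is a propagation estimate up to the Ehrenfest time $T_e(h)=\rho\log(1/h)$, with $\rho>0$ fixed and small. A (generalized) resonant state $u$ attached to a resonance $z$ in the box satisfies an approximate eigenequation for $P_\theta(h)$ and is microlocally trivial near the Euclidean ends; propagating $e^{-itP_\theta(h)/h}u$ forward and backward for $|t|\le T_e(h)$ and applying Egorov's theorem up to the Ehrenfest time --- which is available for \emph{any} smooth flow, with only a polynomial loss $h^{-C\rho}$ and \emph{without} any hyperbolicity hypothesis on $\varphi_t$ --- shows that, up to a negligible error, $u$ is microlocalized on the set
\[
K_T=\{(x,\xi)\in S^*M:\ \varphi_t(x,\xi)\in U\ \text{ for all }|t|\le T\},\qquad T=T_e(h),
\]
where $U$ is a fixed precompact open neighborhood of $K\cap S^*M$. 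Feeding this into a Grushin-problem reduction of the counting function --- so that the range of $\Pi$ essentially lies in the range of a quantized cutoff $\oph(\chi_{K_T})$, whose effective rank is at most $C h^{-(d-1)}\mu_L(K_T)$ by Weyl-type counting for pseudodifferential operators, with $\mu_L$ the natural Liouville measure on $S^*M$ --- one arrives at the refined bound
\[
\mathcal N(R,\beta)\ \le\ C\,R^{d-1}\,\mu_L\big(K_{T_e(1/R)}\big)+o(R^{d-1}),\qquad R\to\infty .
\]

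Granting this refined bound, both parts of Theorem~\ref{t:DG} follow quickly. For~\eqref{e:DG-1}: every $K_T$ is contained in the fixed compact set $\overline U\subset S^*M$, which has finite Liouville measure, so $\mu_L(K_T)\le\mu_L(\overline U)=:C_0<\infty$ uniformly in $T$, and hence $\mathcal N(R,\beta)=\mathcal O(R^{d-1})$. For~\eqref{e:DG-2}: the family $\{K_T\}_{T>0}$ is decreasing, and $\bigcap_{T>0}K_T$ is exactly the set of points of $S^*M$ whose bi-infinite geodesic orbit remains in the compact set $\overline U$, which is precisely $K\cap S^*M$ because $K$ is flow-invariant; since $\mu_L(K_0)<\infty$, continuity of the measure from above gives $\mu_L(K_T)\to\mu_L(K\cap S^*M)$ as $T\to\infty$. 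Under the hypothesis $\Vol(K)=0$ this limit is $0$, and because $T_e(1/R)=\rho\log R\to\infty$ as $R\to\infty$ we conclude $\mu_L\big(K_{T_e(1/R)}\big)\to0$, whence $\mathcal N(R,\beta)=o(R^{d-1})$.

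The real obstacle lies entirely in the refined bound, and more precisely in running the propagation/Egorov analysis and the Grushin-problem bookkeeping out to time $\sim\log(1/R)$: one must dominate the symbol growth in Egorov's theorem, the amplification factor $e^{|\Im z|\,T/h}$ produced by the nonzero imaginary part of the resonance, and the various remainders in the Grushin construction, keeping each of them genuinely $o(R^{d-1})$ while assuming nothing about the structure of $\varphi_t$ on $K$. By contrast, the passage from the refined bound to Theorem~\ref{t:DG} is the routine measure-theoretic step carried out above.
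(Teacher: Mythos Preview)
Your proposal is essentially correct and follows the same high-level strategy as the paper: prove a refined bound of the form $\mathcal N(R,\beta)\le C R^{d-1}\cdot(\text{Liouville volume of a dynamically shrinking set})$, using propagation/Egorov up to the Ehrenfest time with no structural hypothesis on the flow, and then deduce both \eqref{e:DG-1} and \eqref{e:DG-2} by the straightforward measure-theoretic argument you describe. The paper carries out exactly this plan: the refined bound is Theorem~\ref{thm:fractalWeyl}, with the role of your $\mu_L(K_T)$ played by the quantity $\mathcal V(t)=\mu_L(S^*M\cap\mathcal T(t))$ from~\eqref{e:V-t}, and the one-line deduction of Theorem~\ref{t:DG} (``$\mathcal V$ is bounded; $\mathcal V(t)\to 0$ when $K$ has measure zero'') is stated immediately before Theorem~\ref{thm:fractalWeyl}.

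The genuine difference is in the mechanism for the refined bound. You propose to localize resonant states on $K_T$ by propagating them and then bound $\tr\Pi$ via a Grushin reduction and an effective-rank estimate for $\Op_h(\chi_{K_T})$. The paper instead builds an explicit approximate inverse for $P_\theta-\omega^2$ (Lemma~\ref{l:ultimateInverse}) of the form $I=\mathcal Z(\omega)(P_\theta-\omega^2)+\mathcal A(\omega)$ with $\mathcal A(\omega)$ a Hilbert--Schmidt operator microlocalized near the trapped set, bounds $\|\mathcal A(\omega)\|_{\HS}^2$ by $h^{1-d}\mathcal V(t_e)$, and then counts zeros of the Fredholm determinant $\det(I-\mathcal A(\omega)^2)$ via Jensen's inequality. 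Both routes are standard in resonance counting; the determinant/Jensen route has the practical advantage that the ``bookkeeping'' you worry about (controlling the amplification $e^{|\Im z|T/h}$ and the Egorov losses) is cleanly packaged into the operator-norm bounds \eqref{e:ultimZ}--\eqref{e:ultimJ} and the upper/lower bounds \eqref{e:F-upper}--\eqref{e:F-lower} on $|F(\omega)|$, whereas the Grushin route would require you to set up the problem so that these growths are absorbed before the rank estimate. Your identification of the difficulties is accurate, and your final measure-theoretic step ($K_T\downarrow K\cap S^*M$, continuity from above) matches the paper's reasoning precisely.
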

\noindent The bound~\eqref{e:DG-1} has previously been established in various settings by
Petkov--Zworski~\cite[(1.6)]{PetkovZworski},
Bony~\cite{Bony}, and
Sj\"ostrand--Zworski~\cite[Theorem~2]{SjZwFractal}.
We remark that in
general it is difficult to obtain lower bounds on the number of resonances in strips.
 
To state a more precise bound, we use Liouville volume of the set
of trajectories trapped for time $t$
\begin{equation}
  \label{e:V-t}
\mathcal V(t)=\mu_L(S^*M\cap\mathcal T(t)),\qquad
\mathcal T(t)=\pi^{-1}(\mathcal B)\cap \varphi_{-t}(\pi^{-1}(\mathcal B)),
\end{equation}
where $\pi:T^*M\setminus 0\to M$ is the projection map,
$S^*M=\{|\xi|_g=1\}$ is the cosphere bundle, and
$\mathcal B$ is a large compact set with smooth boundary, see~\eqref{e:the-set-B}.
We also use the \emph{Ehrenfest time} at frequency $R>0$,
\begin{equation}
  \label{e:t-e-R}
t_e(R)={\log R\over 2\Lambda_{\max}},\qquad
\Lambda_{\max}:=\limsup_{|t|\to \infty}\frac{1}{|t|}\log\sup_{(x,\xi)\in \mc{T}(t)}\|d\varphi_t(x,\xi)\|.
\end{equation}
Here $\Lambda_{\max}\in [0,\infty)$ is the maximal expansion rate and
if $\Lambda_{\max}=0$, we may replace $\Lambda_{\max}$
by an arbitrarily small positive number and accordingly take $t_e(R)=C\log R$ for any fixed constant $C$.

The following is our main Weyl bound, which immediately implies Theorem~\ref{t:DG} since
$\mathcal V(t)$ is always bounded and $\lim_{t\to\infty}\mathcal V(t)=0$ when $K$ has volume zero.
A connection between the function $\mathcal V(t)$ and resonance counting has previously been
used heuristically in the literature, see~\cite[(10)]{ZworskiRPG}.
See also Stefanov~\cite{Stefanov} for volume-based bounds on the number of resonances
polynomially close to the real axis.%
\begin{figure}
\includegraphics{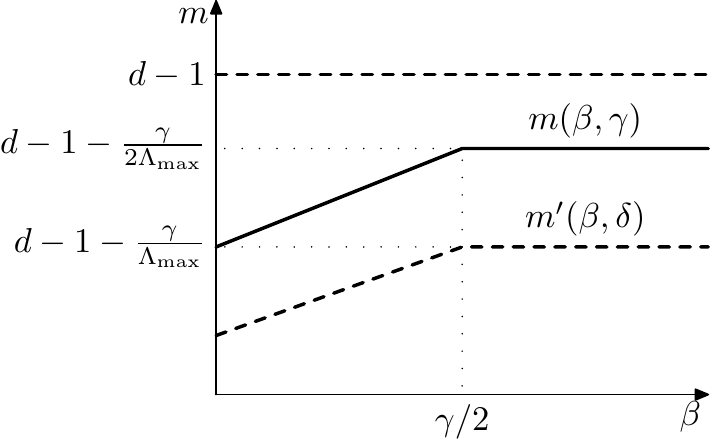}
\qquad\qquad
\includegraphics{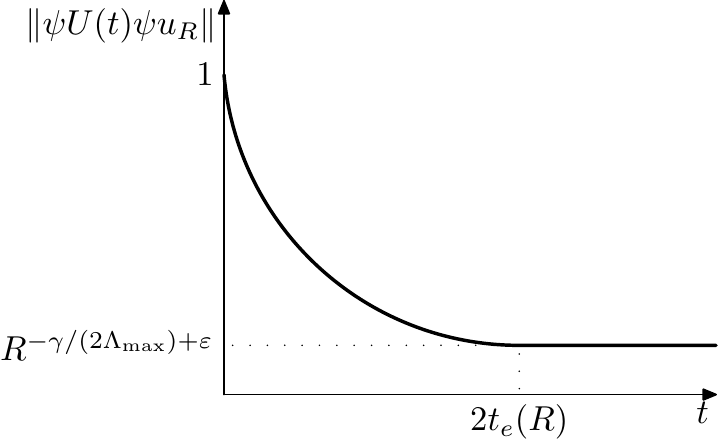}
\hbox to\hsize{\hss \qquad\qquad(a)\hss\hss\qquad\qquad (b)\hss}
\caption{(a) A plot of the exponent $m(\beta,\gamma)$ from~\eqref{e:better-bound} in the case
of positive classical expansion rate $\gamma$,
as compared to the standard Weyl law $m=d-1$ and to the exponent $m'(\beta,\delta)$
from~\cite{DyBo} in the case of hyperbolic manifolds.
(b) A plot of the typical behavior of the norm $\|\psi U(t)\psi u_R\|_{L^2}$ from Theorem~\ref{thm:decayOnAverage}.}
\label{f:weyl-plot}
\end{figure}
\begin{theorem}
\label{thm:fractalWeyl}
For each $\beta\geq 0$, $\e>0$, there exists a constant $C>0$ such that\
\begin{equation}
  \label{e:fractalWeyl}
  \mc{N}(R,\beta)\leq CR^{d-1}\min\Big[\mathcal V\big((1-\varepsilon)t_e(R)\big),\exp\big(2\beta t_e(R)\big)\cdot 
\mathcal V\big(2(1-\varepsilon) t_e(R)\big)\Big].
\end{equation}
\end{theorem}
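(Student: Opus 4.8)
The plan is to prove the Weyl bound \eqref{e:fractalWeyl} via a resolvent estimate on a complex-scaled (or complex-absorbing) operator $P_\theta-\lambda^2$ away from the trapped set, combined with a propagation/trace argument near the trapped set that is pushed out to the Ehrenfest time. First I would set up the semiclassical reduction: rescale so that $\lambda = R/h \cdot(1+o(1))$ with $h\sim R^{-1}$, and replace $-\Delta_g-\lambda^2$ by a semiclassical operator $P_h - z$ with $z$ near $1$ and $\Im z \sim -\beta h$, modified near the Euclidean ends by a complex absorbing potential (or complex scaling) so that the modified operator $P_{h,\theta}$ is elliptic at infinity and its resonances in the box coincide (with multiplicity) with those of $R_g(\lambda)$ in \eqref{e:NRB}. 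The number of resonances in the box is then bounded by $\|(P_{h,\theta}-z)^{-1}\|$-type quantities, concretely via a Jensen-type / Weierstrass-factorization argument applied to a determinant $\det(I + K(z))$ where $K$ is trace class; so the task becomes (i) a good lower bound on this determinant at a reference point and (ii) an upper bound on its growth, the latter controlled by a trace-norm bound on the relevant finite-rank approximation.

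The heart of the matter is the microlocal estimate near the trapped set. Away from $\mathcal T(t)$ for suitable $t$, elliptic estimates plus classical (non-trapping) propagation of singularities give an $O(1)$ (or polynomially bounded) resolvent bound. Near the trapped set I would propagate for time $t = (1-\varepsilon)t_e(R) = (1-\varepsilon)\tfrac{\log(1/h)}{2\Lambda_{\max}}$: this is the maximal time for which a single application of Egorov's theorem and the associated microlocal estimates remain valid with controlled errors (the $\Lambda_{\max}$ in $t_e$ is exactly what makes the symbol calculus survive up to this time, since the derivatives of $\varphi_t$ grow like $e^{\Lambda_{\max}|t|}$ and we need $h^{-\varepsilon}$-type room). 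The key gain is that after propagating for time $t$, the operator $e^{-itP_h/h}$ composed with a cutoff to $\mathcal B$ is, microlocally, supported in $\mathcal T(2t)$ (or $\mathcal T(t)$ for the first factor), and the rank of the corresponding finite-rank projection — equivalently, the number of $h$-boxes needed to cover $S^*M\cap\mathcal T(t)$ — is $\lesssim h^{-(d-1)}\mathcal V(t)$ by the definition \eqref{e:V-t} of $\mathcal V$ together with a standard covering/volume estimate. To get the second, improved term in the minimum I would instead propagate to time $2(1-\varepsilon)t_e(R)$: the wave propagator picks up a factor $e^{\beta t}$ from the $\Im z\sim -\beta h$ in the shifted spectral parameter on each "half" of the trace, producing the $\exp(2\beta t_e)$ prefactor, while the dimensional count is governed by $\mathcal V(2(1-\varepsilon)t_e)$.

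Concretely the steps are: (1) semiclassical/complex-absorbing reduction and the Jensen-inequality framework reducing counting to a trace bound; (2) decompose phase space into the non-trapping region (handled by standard estimates) and a neighborhood of $\mathcal T(t)$; (3) construct, using a partition of unity into $h$-balls refined along the flow, an approximate projector $\Pi_t$ onto the "trapped" microlocal region whose rank is $O(h^{-(d-1)}\mathcal V((1-\varepsilon)t_e))$, and show $(P_{h,\theta}-z)$ is invertible modulo $\Pi_t$ with $O(1)$ norm; (4) feed this into the determinant estimate to get $\mathcal N \lesssim \operatorname{rank}\Pi_t$, yielding the first term; (5) repeat with doubled propagation time and the $e^{\beta t}$ bookkeeping to get the second term; (6) take the minimum. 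The main obstacle I anticipate is step (3): controlling the symbol calculus and the number of microlocal cells uniformly up to the Ehrenfest time, since the standard Egorov remainder is $O(h e^{C|t|})$ and one must choose constants so that at $t=(1-\varepsilon)t_e$ this is still $o(1)$ — and one must do the covering argument carefully so that the count is genuinely $\mathcal V(t) h^{-(d-1)}$ and not $\mathcal V(t) h^{-(d-1)-\delta}$, which is where the $\varepsilon$ and the $\limsup$ in the definition of $\Lambda_{\max}$ are spent. Handling the noncompact Euclidean ends and ensuring the complex-absorbing modification does not interfere with the trace-class structure is a secondary technical point, but it is by now standard.
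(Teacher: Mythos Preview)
Your overall strategy matches the paper's: complex scaling, an approximate inverse of $P_\theta-\omega^2$ modulo an operator $\mathcal A(\omega)$ microlocalized near the trapped set, Jensen's inequality applied to $\det(I-\mathcal A(\omega)^2)$, with the zero count controlled by $\|\mathcal A(\omega)\|_{\HS}^2$.

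Two points of execution differ from what you sketch, and both matter. First, you phrase the key bound as a \emph{rank} estimate via a covering of $S^*M\cap\mathcal T(t)$ by $h$-balls. Without hyperbolicity there is no reason a covering number is controlled by the Liouville volume $\mathcal V(t)$; thin sets can have small volume and large covering number. The paper sidesteps this entirely: it bounds the Hilbert--Schmidt norm directly via $\|\Op_h(a)\|_{\HS}^2\le Ch^{-d}\Vol(\supp a)$, and the extra $h$-wide energy cutoff $\chi_E\big({-h^2\Delta_g-\omega^2\over h}\big)$ (Lemma~\ref{l:h-functional-calculus}) supplies the missing factor of $h$, giving $\|\mathcal A(\omega)\|_{\HS}^2\le Ch^{1-d}\mu_L(S^*M\cap\supp\chi_+\cap\supp\chi_-)\le Ch^{1-d}\mathcal V\big((\rho+\rho')t_e\big)$. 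No covering, no box-counting.

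Second, ``propagate to time $2(1-\varepsilon)t_e$'' taken literally would push a single symbol outside the $S_{h,\nu}$ calculus (derivatives of $\chi\circ\varphi_t$ grow like $h^{-1}$ at $t=2t_e$). The paper instead uses two cutoffs $\chi_+=\chi\cdot(\chi\circ\varphi_{-\rho t_e})$ and $\chi_-=\chi\cdot(\chi\circ\varphi_{\rho' t_e})$, each propagated only up to $t_e$ so each stays in $S^{\comp}_{h,\nu}$, $\nu<1/2$; the product $\chi_-\chi_+$ is supported in a translate of $\mathcal T((\rho+\rho')t_e)$. The approximate inverse is built by an iterated parametrix (Lemma~\ref{l:iterated-parametrix}) that propagates step by step, at each step feeding the piece that has left a neighborhood of $\Gamma_+$ into the nontrapping resolvent $\mathcal R_Q$. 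Crucially the exponential factor $e^{(-h^{-1}\Im\omega)\rho' t_e}$ arises \emph{only} from the $\chi_-$ construction (the $J$-term in Lemma~\ref{l:iterated-parametrix}); the $\chi_+$ construction terminates in $\mathcal R_Q$ and contributes no such factor. The two terms in the minimum then come from the two parameter choices $(\rho,\rho')=(1-\varepsilon,\,C_0/t_e)$ and $(\rho,\rho')=(1-\varepsilon,1-\varepsilon)$. Your ``each half of the trace'' remark suggests you intuit this asymmetry, but the write-up should make explicit that it is two one-sided propagations, not one two-sided one.
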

The proof of Theorem~\ref{thm:fractalWeyl} follows the strategy of~\cite{DyBo}.
We first construct an approximate inverse for the complex scaled version of the operator
$-\Delta_g-\lambda^2$ which shows that if $\lambda$ is a resonance,
then $I-A(\lambda)$ is not invertible, where $A(\lambda)$ is a pseudodifferential
operator whose symbol is supported in a small neighborhood of the trapped set.
By Jensen's inequality, the number of resonances can be estimated using
bounds on the determinant of $I-A(\lambda)^2$, which is controlled by the Hilbert--Schmidt norm
$\|A(\lambda)\|_{\HS}$. The latter norm can be bounded by the right-hand side of~\eqref{e:fractalWeyl}.
The operator $A(\lambda)$ is defined using the dynamics of the flow for time $t_e(R)$,
and due to Egorov's theorem up to Ehrenfest time it lies in a mildly exotic pseudodifferential calculus.

The proof of Theorem~\ref{thm:fractalWeyl} only relies on propagation of singularities
and the semiclassical outgoing property of the resolvent, see~\S\ref{s:resolvent-infinity}.
In particular it applies to a wide variety of situations including semiclassical
Schr\"odinger operators and asymptotically hyperbolic manifolds (where~\cite{Vasy-AH1,Vasy-AH2} replaces
complex scaling).
It also applies to the setting of Pollicott--Ruelle resonances where upper bounds based on volume
estimation have been proved by Faure--Sj\"ostrand~\cite{Faure-Sjostrand},
Datchev--Dyatlov--Zworski~\cite{ddz},
and Faure--Tsujii~\cite{Faure-Tsujii-FWL}.

The expression~\eqref{e:fractalWeyl} can be bounded in terms of the \emph{classical escape rate}
\begin{equation}
  \label{e:escape-rate}
\gamma:=-\limsup_{t\to\infty} {1\over t}\log \mathcal V(t)\geq 0.
\end{equation}
Theorem~\ref{thm:fractalWeyl} implies that (see Figure~\ref{f:weyl-plot}(a))
\begin{equation}
  \label{e:fractalWeyl-gamma}
\mathcal N(R,\beta)=\mathcal O(R^{m(\beta,\gamma)+}),\quad
m(\beta,\gamma):=\begin{cases}
d-1-\displaystyle{\gamma-\beta\over \Lambda_{\max}},&\displaystyle 0\leq \beta\leq{\gamma\over 2}; \\
\noalign{\vskip .1in}
d-1-\displaystyle{\gamma\over 2\Lambda_{\max}},&\displaystyle \beta\geq{\gamma\over 2}.
\end{cases}
\end{equation}
where $\mathcal O(R^{m+})$ stands for a function which is $\mathcal O(R^{m+\varepsilon})$
for each $\varepsilon>0$. Note that the change in behavior for $m(\beta,\gamma)$
happens when $\beta$ is equal to half the classical escape rate, which is the depth
at which accumulation of resonances has previously
been observed mathematically, numerically, and experimentally~-- see~\S\ref{s:history}.

Under the assumption that the trapped set is \emph{hyperbolic},
there exist several previous results giving bounds on $\mathcal N(R,\beta)$
which are stronger than~\eqref{e:fractalWeyl-gamma}, see~\S\ref{s:history}.
For instance, in the case of $d$-dimensional convex co-compact hyperbolic quotients
with limit set of dimension $\delta\in [0,d-1)$ we have~\cite[Theorem~1]{DyBo}
\begin{equation}
  \label{e:better-bound}
\mathcal N(R,\beta)=\mathcal O(R^{m'(\beta,\delta)+}),\quad
m'(\beta,\delta)=\min(2\delta+2\beta+1-d,\delta).
\end{equation}
Since in this case $\gamma=d-1-\delta$ and $\Lambda_{\max}=1$,
the bound~\eqref{e:better-bound} corresponds to~\eqref{e:fractalWeyl-gamma}
with $\Lambda_{\max}$ replaced by ${1\over 2}\Lambda_{\max}$, or equivalently
$t_e(R)$ replaced by $2t_e(R)$. The lack of optimality of~\eqref{e:fractalWeyl}
is thus due to the fact that without the hyperbolicity assumption we can only propagate quantum observables up to the Ehrenfest time
(rather than twice the Ehrenfest time as in~\cite{DyBo}).
Upper bounds on $\mathcal N(R,\beta)$ are also available in the case of normally hyperbolic trapping~--
see~\S\ref{s:history}.

On the other hand, little is known on resonance bounds in strips 
for smooth metrics when $\varphi_t$
is not hyperbolic or normally hyperbolic on the trapped set, and Theorem~\ref{thm:fractalWeyl}
appears to give the first general upper bound depending on the dynamics of $\varphi_t$.
(For operators with real analytic coefficients, a bound depending
on the volume of an $R^{-1/2}$ sized neighborhood of the trapped set
was proved by Sj\"ostrand~\cite[Theorem~4.2]{SjostrandFWL}.)
In particular, if the escape rate is positive
then Theorem~\ref{thm:fractalWeyl}
gives a power improvement over $\mathcal O(R^{d-1})$.
The most promising potential example of such systems which are not hyperbolic/normally hyperbolic
is given by uniformly partially hyperbolic systems,
see~\cite{FernandoEnrique} and~\cite[Theorem 4]{YoungDyn}.

An example with zero escape rate is given by
manifolds of revolution with cylindrical or degenerate
hyperbolic trapping, where Theorem~\ref{thm:fractalWeyl}
gives an improvement which is a power of $\log R$~-- see~\S\ref{s:examples}.
See the work of Christianson~\cite{Hans} for a related question
of resolvent bounds on more general manifolds
of revolution.

\subsection{Wave decay for random initial data}

Our next theorem concerns high probability decay estimates for the half-wave group
$$
U(t):=\exp(-it\sqrt{-\Delta_g}).
$$
It is often not possible to show deterministic exponential decay for the 
cutoff propagator $\psi U(t)\psi$, $\psi\in C_c^\infty(M)$, when the trapping
is sufficiently strong. However as Theorem~\ref{thm:decayOnAverage} below shows,
if the classical escape rate is positive then such exponential decay holds
for a certain time when the initial data is random.
We apply $U(t)$ to a function chosen at random using the following procedure.
Let $\mathcal B$ be the large
smooth compact subset of $M$ given by~\eqref{e:the-set-B},
$\Delta_{\mathcal B}$ be the Dirichlet Laplacian on $\mathcal B$ with respect to the metric $g$, and
$\{(e_k,\lambda_k)\}_{k=1}^\infty$ be an orthonormal basis of $L^2(\mathcal B)$ with
$$
(-\Delta_{\mathcal B}-\lambda_k^2)e_k=0.
$$
Fix small $\varepsilon'>0$. For $R>0$ consider the subspace of $L^2(\mathcal B)$
\begin{equation}
  \label{e:E-R}
\mathcal E_R:=\bigg\{\sum_{k\in I_R}a_ke_k(x),\ a_k\in \mathbb{C}\bigg\},\quad\quad I_R:=\{k\colon \lambda_k\in R[1-\varepsilon',1+\varepsilon']\}.
\end{equation}
By the Weyl law~\cite[Theorem~29.3.3]{HOV4}, $\mathcal E_R$ has dimension
$cR^{d}+\mathcal O(R^{d-1})$ for some $c>0$. Let
$$
u_R\ \in\ \mathcal S_R:=\{u\in \mathcal E_R\colon \|u\|_{L^2}=1\}
$$
be chosen at random with respect to the standard measure on the sphere.
As before, denote by $K\subset T^*M\setminus 0$ the trapped set.
Then our result is as follows:
\begin{theorem}
  \label{thm:decayOnAverage}
Suppose that $K\neq\emptyset$ and $\psi \in C_c^\infty(\mathcal B^\circ)$.
Fix $C_0,\alpha,\varepsilon>0$.
Then there exists $C>0$ such that for all $m\geq C$, 
\begin{equation}
\label{e:decayonAverage}
\mathbb{P}\Big[\|\psi U(t)\psi u_R\|_{L^2}\leq m \sqrt{\mathcal V\big((1-\varepsilon)\min(t,2t_e(R))\big)}\text{ for all }t\in [\alpha\log R,C_0R]\Big]\ \geq\ 1-Ce^{-m^2/C}.
\end{equation}
\end{theorem}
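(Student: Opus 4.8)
\emph{Overview.} The plan is to deduce Theorem~\ref{thm:decayOnAverage} from a deterministic Hilbert--Schmidt bound on $\psi U(t)\psi$ restricted to the spectral window $\mathcal E_R$, and then to promote that bound to a high-probability statement, uniform in $t$, by concentration of measure on the sphere $\mathcal S_R$.

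\emph{Step 1: reduction.} Write $N_R=\dim\mathcal E_R\sim cR^d$ and let $\Pi_R\colon L^2(\mathcal B)\to\mathcal E_R$ be the orthogonal projection. If $u_R=\sum_{k\in I_R}a_ke_k$ with $(a_k)$ uniform on the unit sphere of $\mathbb C^{N_R}$, then for any bounded $T$ one has $\mathbb E\,\|Tu_R\|_{L^2}^2=N_R^{-1}\|T\Pi_R\|_{\HS}^2=N_R^{-1}\sum_{k\in I_R}\|Te_k\|_{L^2}^2$. Applying this with $T=\psi U(t)\psi$, it suffices (after shrinking $\varepsilon$ if necessary) to prove, for $t\in[\alpha\log R,C_0R]$,
\begin{equation}
\label{e:plan-HS}
\sum_{k\in I_R}\|\psi U(t)\psi e_k\|_{L^2}^2\ \le\ C\,R^d\,\mathcal V\big((1-\varepsilon)\min(t,2t_e(R))\big),
\end{equation}
which yields $\mathbb E\,\|\psi U(t)\psi u_R\|_{L^2}^2\le C'\,\mathcal V_0(t)$ with $\mathcal V_0(t):=\mathcal V((1-\varepsilon)\min(t,2t_e(R)))$.

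\emph{Step 2: the Hilbert--Schmidt bound.} Set $h=R^{-1}$. Since $\psi\in C_c^\infty(\mathcal B^\circ)$ is supported away from $\partial\mathcal B$, finite speed of propagation identifies $\psi\Pi_R\psi$, up to errors negligible on the relevant scale, with $\psi\,\rho(h\sqrt{-\Delta_g})\,\psi$ for a suitable $\rho$ supported near $[1-\varepsilon',1+\varepsilon']$; in particular the $h$-wavefront set of each $\psi e_k$ lies in $\pi^{-1}(\supp\psi)\cap\{|\xi|_g\sim1\}$. Writing $\|\psi U(t)\psi e_k\|^2=\langle U(-t)\psi^2U(t)\,\psi e_k,\psi e_k\rangle$ and summing in $k$,
\begin{equation*}
\sum_{k\in I_R}\|\psi U(t)\psi e_k\|_{L^2}^2=\tr\!\big(\,[U(-t)\psi^2U(t)]\cdot\psi\Pi_R\psi\,\big).
\end{equation*}
For $t\le2(1-\varepsilon)t_e(R)$ write $t=2s$ with $s\le(1-\varepsilon)t_e(R)$; then $U(-s)\psi^2U(s)$ and $U(s)\,[\psi\Pi_R\psi]\,U(-s)$ are, by Egorov's theorem in a mildly exotic calculus, $h$-pseudodifferential operators with principal symbols $\psi^2\circ\pi\circ\varphi_s$ and $(\psi^2\rho\circ p)\circ\varphi_{-s}$, and the trace above equals $(2\pi h)^{-d}$ times the integral of the product of their symbols, up to errors small relative to $\mathcal V_0(t)$. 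That product is supported within a mild enlargement of the set of trajectories lying over $\supp\psi$ at both times $\pm s$; by invariance of $\mu_L$, $\supp\psi\subset\mathcal B$, and a dispersion estimate for trapped trajectories, this set has Liouville volume at most $C\,\mathcal V((1-\varepsilon)t)$. For $t>2(1-\varepsilon)t_e(R)$ one freezes the window at $\tau:=2(1-\varepsilon)t_e(R)$: splitting $\psi e_k=\Op_h(\indic_{V})\psi e_k+\Op_h(1-\indic_{V})\psi e_k$ with $V$ a neighbourhood of $K$ of volume $\le C\,\mathcal V(\tau)$ whose complement in $S^*\mathcal B$ leaves $\pi^{-1}(\mathcal B)$ within time $\tau<t$, the exact identity $\|\Op_h(\indic_V)\|_{\HS}^2=(2\pi h)^{-d}\mu_L(V)$ together with $\|\psi U(t)\|\le\|\psi\|_\infty$ bounds the first contribution by $C\,h^{-d}\mathcal V(\tau)$, while $\psi U(t)\Op_h(1-\indic_V)\psi e_k=\mathcal O(h^\infty)$ because $U(t)\Op_h(1-\indic_V)\psi e_k$ has $h$-wavefront set outside $\pi^{-1}(\supp\psi)$ --- here the Euclidean structure of the ends (a geodesic that exits $\mathcal B$ does so with outward velocity and never returns) is used. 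This proves~\eqref{e:plan-HS}.

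\emph{Step 3: concentration and uniformity in $t$.} The map $F_t(u):=\|\psi U(t)\psi u\|_{L^2}$ is $\|\psi\|_\infty^2$-Lipschitz on $\mathcal S_R$, a sphere of real dimension $\sim R^d$; since $\mathbb E\,F_t\le\sqrt{C'\mathcal V_0(t)}\le\tfrac m4\sqrt{\mathcal V_0(t)}$ once $m\ge4\sqrt{C'}$, L\'evy's concentration inequality gives $\mathbb P[F_t(u_R)\ge m\sqrt{\mathcal V_0(t)}]\le C_1\exp(-c_1m^2N_R\mathcal V_0(t))$. Because $K\neq\emptyset$ and the geodesic flow, being symplectic, has at most $d-1$ expanding directions transverse to a trapped trajectory, $\mathcal V_0(t)\ge\mathcal V((1-\varepsilon)2t_e(R))\ge c\,R^{-(d-1)}$, so $N_R\mathcal V_0(t)\ge c\,R^{1+(d-1)\varepsilon}$. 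Finally $t\mapsto\|\psi U(t)\psi u_R\|$ is $CR$-Lipschitz on $[\alpha\log R,C_0R]$ (since $\|\sqrt{-\Delta_g}\psi\Pi_R\|\le CR$), so a net of cardinality $\mathcal O(R^{(d+3)/2})$ reduces the claim ``for all $t$'' to the net points; a union bound --- using that $\mathcal V_0$ varies slowly across the net spacing, that $R^{(d+3)/2}\exp(-c_1m^2R^{1+(d-1)\varepsilon})\le Ce^{-m^2/C}$ for $R$ large, and a trivial estimate when $R$ is bounded --- completes the proof.

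\emph{Main obstacle.} The heart of the matter is Step~2, specifically the factor of \emph{two} in the Ehrenfest time together with the sharp $(1-\varepsilon)$ loss: this forces one to run Egorov's theorem in a borderline exotic calculus --- propagating one cutoff forward and the other backward by times up to $(1-\varepsilon)t_e(R)$, or propagating $\Op_h(1-\indic_V)$ for times up to $2(1-\varepsilon)t_e(R)$ --- and to keep all composition and Egorov remainders small relative to $\mathcal V_0(t)$, which may be polynomially small in $R$ (the individual remainders are only $O(1)$ in operator norm, and one must exploit that they are jointly microlocalized on the small returning set). This careful microlocal bookkeeping is exactly where the techniques of~\cite{DyBo} are needed. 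A secondary technical point is the comparison, with sufficiently small errors, of the Dirichlet spectral projector $\Pi_R$ with a function of $\sqrt{-\Delta_g}$ near $\supp\psi$.
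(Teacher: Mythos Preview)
Your overall architecture (Hilbert--Schmidt bound followed by concentration) matches the paper's, but Step~3 contains a genuine gap. Your union bound over a net of size $\sim R^{(d+3)/2}$ can only give a uniform-in-$R$ estimate of the form $Ce^{-m^2/C}$ if the per-point concentration bound decays faster than any polynomial in $R$. You obtain this from the claim $N_R\mathcal V_0(t)\ge cR^{1+(d-1)\varepsilon}$, which rests on the lower bound $\mathcal V_0(t)\ge cR^{-(d-1)}$. That lower bound is not justified: the symplectic pairing of Lyapunov exponents does not by itself produce an invariant stable foliation along which one can take $O(1)$-sized boxes. The only general lower bound available (see the paper's~\eqref{e:volume-nonzero}) is $\mathcal V(t)\ge C^{-1}e^{-2(d-1)\Lambda t}$, which at $t=2(1-\varepsilon)t_e(R)$ yields merely $\mathcal V_0\gtrsim R^{-2(d-1)+}$, hence $N_R\mathcal V_0\gtrsim R^{2-d+}$. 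For $d\ge 2$ this is not enough to absorb a polynomially growing net, and the union bound fails to give $Ce^{-m^2/C}$ uniformly in $R$. (Switching to the $\|A\|_{\HS}$-Lipschitz version of concentration, as in the paper's Lemma~\ref{l:probability}, does not help: the per-point bound then becomes $e^{-m^2/16}$, again insufficient against a polynomial net.)

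The paper avoids this obstacle by a different mechanism. Using the ``mega-sandwich'' estimate (Lemma~\ref{l:mega-sandwich}), one has the \emph{deterministic} inequality $\|\psi U(s+t)\psi u\|\le\|\chi' U(t)\psi_2(-h^2\Delta_g)\psi u\|+O(h^\infty)$ for all $s\ge 0$, so control at a single time $t_i$ automatically controls the whole interval $[t_i,t_{i+1}]$. Choosing the $t_i$ geometrically spaced on $[\varepsilon_0\log R,\,2\rho t_e(R)]$ requires only $L=O(1)$ times (bounded independently of $R$), and a union bound over $L+1$ events gives the desired $Ce^{-m^2/C}$ directly. This monotonicity-type reduction --- not Lipschitz-in-$t$ continuity plus a net --- is the missing ingredient. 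There is also a looser end in your Step~2 for $t>2(1-\varepsilon)t_e(R)$: the cutoff $\indic_V$ to $V\approx\mathcal T(\tau)$ with $\tau=2(1-\varepsilon)t_e(R)$ is outside the $S^0_{h,\nu}$ calculus (its derivatives grow like $h^{-(1-\varepsilon)}$, not $h^{-\nu}$ with $\nu<1/2$); the paper handles this by building the cutoff as a product of one forward- and one backward-propagated factor, each by time $\le\rho t_e(R)$, and by reaching long times through the sandwich lemma rather than by propagating a single symbol.
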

\noindent A related result in the setting of the damped wave equation was proved
by Burq--Lebeau~\cite[page~6]{BurqLebeau}. To the authors' knowledge,
Theorem~\ref{thm:decayOnAverage} has not been previously known even
in simple settings such as a single hyperbolic trapped orbit.
We expect that a corresponding lower bound can be proved by a similar argument.

In terms of the escape rate $\gamma$ from~\eqref{e:escape-rate}, Theorem~\ref{thm:decayOnAverage}
gives the following bound with high probability for each $\varepsilon>0$ (see Figure~\ref{f:weyl-plot}(b)):
\begin{equation}
  \label{e:decayOnAverage-gamma}
\|\psi U(t)\psi u_R\|_{L^2}=\begin{cases}
\mathcal O(e^{-\gamma t/2+\varepsilon t}),& \alpha\log R\leq t\leq 2t_e(R);\\
\mathcal O(R^{-\gamma/(2\Lambda_{\max})+\varepsilon}),& 2t_e(R)\leq t\leq C_0R.
\end{cases}
\end{equation}
The bounds~\eqref{e:fractalWeyl-gamma} and~\eqref{e:decayOnAverage-gamma}
(and more generally
Theorems~\ref{thm:fractalWeyl} and~\ref{thm:decayOnAverage}) are related by the following heuristic.
To simplify the formulas below assume that $\Lambda_{\max}=1$.
Take small $\beta>0$, then by~\eqref{e:fractalWeyl-gamma} the number of resonances
in
$$
\Omega=\{\lambda\colon R/2\leq |\Re\lambda|\leq R,\quad \Im\lambda\geq -\beta\}
$$
is $\mathcal O(R^{d-\gamma+\beta+})$. Suppose that $U(t)$ has a resonance expansion up to $\Im\lambda=-\beta$
(similar to~\cite[Theorem~3.9]{ZwScat} but with infinitely many terms in the expansion;
such resonance expansions are quite rare which is one of the reasons why the argument below
is heuristic).
Then we expect for some~$N$,
\begin{equation}
  \label{e:resexp}
\psi U(t)\psi u_R=\sum_{\lambda\in\Omega\atop \lambda\text{ resonance}} e^{-it\lambda} \langle \psi u_R,v_\lambda\rangle
\psi w_\lambda+\mathcal O(R^Ne^{-\beta t})
+\mathcal O(R^{-\infty}).
\end{equation}
Here resonances with $\Im\lambda\geq -\beta$ and $|\Re\lambda|\notin [R/2,2R]$ would contribute
$\mathcal O(R^{-\infty})$ because the corresponding coresonant states live in a different band
of frequencies than $\psi u_R$.

If we additionally knew that the resonant and coresonant states $w_\lambda,v_\lambda$ are bounded in $L^2_{\loc}$
and form approximately orthonormal systems on $\supp\psi$, then with high probability
we would have $\langle \psi u_R,v_\lambda\rangle\sim R^{-d/2}$.
Estimating the norm of the sum on the right-hand side of~\eqref{e:resexp}
and using approximate orthogonality, we then expect that
$$
\|\psi U(t)\psi u_R\|_{L^2}\leq \mathcal O(R^{{\beta-\gamma\over 2}+})
+\mathcal O(R^Ne^{-\beta t}).
$$
For $t\geq C_1\log R$ and $C_1$ large enough, the first term on the right-hand side dominates
and we recover~\eqref{e:decayonAverage} (given that $\beta$ can be chosen small).
Note that~\eqref{e:decayonAverage}
also holds for $t\leq C_1\log R$, but this cannot be seen from the resonance expansion because
the error term in this expansion dominates for short times.

We remark that while the above heuristic is useful to relate Theorems~\ref{thm:fractalWeyl}
and~\ref{thm:decayOnAverage}, the proof of Theorem~\ref{thm:decayOnAverage} does not rely on it.
Instead, by a concentration of measure argument we reduce to estimating the Hilbert--Schmidt
norm of the cutoff propagator $\psi U(t)\psi$ restricted to a range of frequencies.
The latter norm is next bounded in terms of the volume $\mathcal V(t)$.
As in the proof of Theorem~\ref{thm:fractalWeyl}, this strategy can only be used up to
time $2t_e(R)$ so that the resulting symbols still lie in a mildly exotic calculus.

\subsection{Previous results}
  \label{s:history}
  
We now briefly review previous results on Weyl bounds for resonances in strips, referring the reader
to the reviews of Nonnenmacher~\cite[\S\S4,7]{NoReview} and Zworski~\cite[\S3.4]{ZwoReview}
for more information.
 
When the trapping is hyperbolic, upper bounds on $\mathcal N(R,\beta)$ have been proved in various settings
by Sj\"ostrand~\cite{SjostrandFWL}, Zworski~\cite{Zworski99},
Guillop\'e--Lin--Zworski~\cite{GLZ}, Sj\"ostrand--Zworski~\cite{SjZwFractal},
Datchev--Dyatlov~\cite{DaDy}, and
Nonnenmacher--Sj\"ostrand--Zworski~\cite{NoSjZw}. These bounds take the form
\begin{equation}
  \label{e:standardFWL}
\mathcal N(R,\beta)=\mathcal O(R^{\delta+})
\end{equation}
where $2\delta+1$ is the upper Minkowski dimension of $K\cap S^*M$,
and $R^{\delta+}$ can be replaced by $R^\delta$ if $K\cap S^*M$ has pure Minkowski dimension.
The bound~\eqref{e:standardFWL} is
stronger than the one in Theorem~\ref{thm:fractalWeyl}. Indeed,
$\varphi_{-t/2}(\mathcal T(t))$ contains an $e^{-(\Lambda_{\max}+\varepsilon)t/2}$ sized neighborhood
of the trapped set $K$, which implies that (assuming that the upper and lower Minkowski dimensions
of $K$ agree)
$$
\mathcal V((1-\varepsilon)t)\geq C^{-1}e^{-\Lambda_{\max}(d-1-\delta)t}.
$$
Therefore
$$
R^{d-1}\min\Big[\mathcal V\big((1-\varepsilon)t_e(R)\big),\exp\big(2\beta t_e(R)\big)\cdot 
\mathcal V\big(2(1-\varepsilon) t_e(R)\big)\Big]\geq C^{-1}\min\big(R^{d-1+\delta\over 2},R^{\delta+\beta/\Lambda_{\max}}\big).
$$
See also the discussion following~\eqref{e:better-bound}. 

In the setting of hyperbolic quotients, Naud~\cite{Na}, Jakobson--Naud~\cite{JaNa}, and Dyatlov~\cite{DyBo}
have obtained bounds which improve over~\eqref{e:standardFWL} when $\delta<\gamma/2$;
here $\gamma>0$ is the escape rate defined in~\eqref{e:escape-rate}. See also
the work of Dyatlov--Jin~\cite{DyatlovJin} in the case of open quantum maps. Concentration
of resonances near the line $\{\Im\lambda=-\gamma/2\}$ has been observed numerically
(for the semiclassical zeta function in obstacle scattering) by Lu--Sridhar--Zworski~\cite{LSZ}
and experimentally (for microwave scattering) by Barkhofen et al.~\cite{ZworskiPRL}.

For $r$-normally hyperbolic trapped sets (such as those appearing in Kerr--de Sitter black holes),
Dyatlov~\cite{DyJAMS} obtained an upper bound of the form~\eqref{e:standardFWL}. In this setting
$K$ is smooth and $\delta$ is an integer. Under a pinching condition, it is shown in~\cite{DyJAMS,DyAIF}
that resonances in strips have a band structure and the number of resonances in the first band
with $|\lambda|\leq R$ grows like $R^{\delta+1}$.

\subsection{Structure of the paper}

\begin{itemize}
\item In~\S\ref{s:prelims} we review geometry and dynamics
of manifolds with Euclidean ends (\S\ref{s:Euclid})
and semiclassical analysis (\S\S\ref{s:semi}, \ref{s:funcal}).
\item In~\S\ref{s:reduction} we perform analysis of the
scattering resolvent and the wave propagator near the
infinite ends of $M$ to reduce to a neighborhood of the trapped set.
\item In~\S\ref{s:cutoffs} we construct dynamical cutoff functions
used in the proofs.
\item In~\S\ref{s:weyl}, we prove Theorem~\ref{thm:fractalWeyl}.
\item In~\S\ref{s:wave}, we prove Theorem~\ref{thm:decayOnAverage}.
\item In~\S\ref{s:examples}, we estimate the quantity $\mathcal V(t)$
for two examples of manifolds of revolution. 
\end{itemize}

\noindent\textbf{Acknowledgements}. The authors would like to thank Maciej Zworski, Nicolas Burq,
St\'ephane Nonnenmacher, and
Andr\'as Vasy for many useful discussions,
and the anonymous referees for many useful suggestions to improve the manuscript.
This research was conducted during the period SD served as
a Clay Research Fellow. JG was partially supported by an NSF Mathematical Science Postdoctoral Research
Fellowship DMS-1502661.

\section{Preliminaries}
\label{s:prelims}

\subsection{Manifolds with Euclidean ends}
  \label{s:Euclid}

Thoughout the paper we assume that $(M,g)$ is a noncompact complete $d$-dimensional Riemannian manifold which
has Euclidean infinite ends in the following sense:
\begin{itemize}
\item there exists a function $r\in C^\infty(M;\mathbb R)$ such that
the sets $\{r\leq c\}$ are compact for all $c$, and
\item there exists $r_0>0$ such that $\{r\geq r_0\}$ is the disjoint union
of finitely many components, each of which is isometric to $\mathbb R^d\setminus B(0,r_0)$
with the Euclidean metric, and the pullback of $r$ under the isometry
is the Euclidean norm.
\end{itemize}
The connected components of $\{r\geq r_0\}$ are called the \emph{infinite ends} of $M$.
We parametrize each of them by a \emph{Euclidean coordinate}
$y\in\mathbb R^d\setminus B(0,r_0)$ so that
$g=\sum_{j=1}^d dy_j^2$. We lift $r$ to a function
on $T^*M$ and parametrize the cotangent bundle of each infinite end
by $(y,\eta)\in T^*(\mathbb R^d\setminus B(0,r_0))$.

As in~\eqref{e:geoflow}, put $p(x,\xi):=|\xi|_{g(x)}$
and $\varphi_t:=\exp(tH_p)$. Then
on each infinite end, we have
\begin{equation}
  \label{e:operator-far}
p(y,\eta)=|\eta|,\quad
H_{p}={\langle\eta,\partial_y\rangle\over |\eta|}.
\end{equation}
Define the \emph{directly escaping sets} in $T^*\mathbb R^d$ by
\begin{equation}
  \label{e:de-sets-R}
\begin{aligned}
\mathcal E_{\pm,\mathbb R}:=\{(y,\eta)\in T^*\mathbb R^d\colon
|y|\geq r_0,\
\pm \langle y,\eta\rangle_{\mathbb R^d}\geq 0\},\\
\mathcal E_{\pm,\mathbb R}^\circ:=\{(y,\eta)\in T^*\mathbb R^d\colon
|y|> r_0,\
\pm \langle y,\eta\rangle_{\mathbb R^d}>0\},
\end{aligned}
\end{equation}
and pull these back by the Euclidean coordinates in the infinite ends of $M$ to
\begin{equation}
  \label{e:de-sets-M}
\mathcal E_\pm,\mathcal E_\pm^\circ\ \subset\ \{r\geq r_0\}\ \subset\ T^*M.
\end{equation}
It follows from~\eqref{e:operator-far} that for $\mathbf x\in T^*M\setminus 0$,
\begin{equation}
  \label{e:convexinf-1}
\mathbf x\in \mathcal E_\pm \quad \Longrightarrow\quad
\varphi_{\pm t}(\mathbf x)\in\mathcal E_\pm,\quad
r(\varphi_{\pm t}(\mathbf x))\geq\sqrt{r(\mathbf x)^2+t^2} \quad\text{for all }t\geq 0,
\end{equation}
in particular $r(\varphi_t(\mathbf x))\to\infty$
as $t\to \pm\infty$. Arguing by contradiction, this implies that for all $\mathbf x\in T^*M\setminus 0$
\begin{equation}
  \label{e:convexinf-2}
r(\mathbf x)\geq r_0,\
r(\varphi_{\mp t_0}(\mathbf x))\leq r(\mathbf x)\text{ for some }t_0> 0\quad\Longrightarrow\quad
\pm\langle y(\mathbf x),\eta(\mathbf x)\rangle_{\mathbb R^d}> 0.
\end{equation}
Therefore, if a trajectory of $\varphi_t$ starting on $\{r<r_0\}$ enters some infinite end, it escapes
to infinity inside this end.

Define the incoming/outgoing tails $\Gamma_\pm$ and the trapped set $K$ by
\begin{equation}
  \label{e:gamma-def}
\Gamma_\pm:=\{\mathbf x\in T^*M\setminus 0\ \colon\ r(\varphi_t(\mathbf x))\not\to\infty\text{ as }t\to\mp\infty\},\quad
K:=\Gamma_+\cap\Gamma_-.
\end{equation}
The next lemma establishes basic properties of $\Gamma_\pm$ and $K$;
see~\cite[\S6.1]{ZwScat} for a more general setting.
\begin{lemma}
  \label{l:trapprop}
1. The sets $\Gamma_\pm,K$ are closed in $T^*M\setminus 0$ and
\begin{equation}
  \label{e:K-contained}
K\subset \{r<r_0\},
\end{equation}
in particular $K\cap S^*M$ is compact.

\noindent 2. We have locally uniformly in $\mathbf x$,
\begin{equation}
  \label{e:trapprop-1}
\mathbf x\in \Gamma_\pm \quad\Longrightarrow\quad d(\varphi_t(\mathbf x),K)\to 0\quad\text{as }t\to \mp\infty.
\end{equation}

\noindent 3. Let $U$ be a neighborhood of $K$ and $V\subset T^*M\setminus 0$ be compact. Then
there exists $T>0$ such that
\begin{equation}
  \label{e:trapprop-2}
\varphi_{-t}(V)\cap\varphi_{s}(V)\ \subset\ U\quad\text{for all }t,s\geq T.
\end{equation}

\noindent 4. Assume that $V\subset T^*M\setminus 0$ is compact and
$V\cap\Gamma_\pm=\emptyset$. Then there exists $T>0$ such that
\begin{equation}
  \label{e:trapprop-3}
\varphi_{\mp t}(V)\ \subset\ \mathcal E_\mp^\circ\cap\Big\{r\geq \sqrt{r_0^2+(t-T)^2}\Big\}\quad\text{for all }t\geq T.
\end{equation}
Moreover, the set $\bigcup_{\mp t\geq 0}\varphi_t(V)$ is closed in $T^*M$.
\end{lemma}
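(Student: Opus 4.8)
The whole lemma is driven by the behavior of geodesics in the infinite ends, and the plan is to first distill from \eqref{e:convexinf-1}--\eqref{e:convexinf-2} two facts that will be used throughout. The first is an \emph{escape criterion}: for $\mathbf x\in T^*M\setminus 0$ one has $\mathbf x\notin\Gamma_+$ if and only if $\varphi_{-t_0}(\mathbf x)\in\mathcal{E}_-^\circ$ for some $t_0\ge 0$, and symmetrically $\mathbf x\notin\Gamma_-$ if and only if $\varphi_{t_0}(\mathbf x)\in\mathcal{E}_+^\circ$ for some $t_0\ge 0$. The implication ``$\Leftarrow$'' is immediate from \eqref{e:convexinf-1}; for ``$\Rightarrow$'' one uses that, by \eqref{e:operator-far}, $r^2\circ\varphi_t$ has second derivative $2$ on any time interval where the trajectory lies in an infinite end, so a backward trajectory that enters $\{r>r_0\}$ and does not escape must turn around, and at the outermost crossing of $\{r=r_0\}$ one reads off $\langle y,\eta\rangle_{\mathbb R^d}<0$, i.e. membership in $\mathcal{E}_-^\circ$ (alternatively one quotes \eqref{e:convexinf-2}). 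The second is an \emph{a priori bound}: if $\mathbf x\in\Gamma_+$ then $r(\varphi_t(\mathbf x))\le\max(r(\mathbf x),r_0)$ for all $t\le 0$, and symmetrically for $\Gamma_-$ and $t\ge 0$. This again follows from \eqref{e:convexinf-1}--\eqref{e:convexinf-2} by tracking the excursions of the backward trajectory into the ends: any excursion that terminates would, at its outer endpoint, put the trajectory in $\mathcal{E}_-$ and hence force $r\to\infty$, contradicting $\mathbf x\in\Gamma_+$, so the backward trajectory stays in $\{r\le r_0\}$ apart from at most one monotone excursion ending at $t=0$, on which $r\le r(\mathbf x)$.

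Given these, part~1 is soft: the set $\{\mathbf x\notin\Gamma_\pm\}=\bigcup_{t_0\ge 0}\varphi_{t_0}(\mathcal{E}_\mp^\circ)$ is open, so $\Gamma_\pm$ and $K=\Gamma_+\cap\Gamma_-$ are closed in $T^*M\setminus 0$; and if some $\mathbf x\in K$ had $r(\mathbf x)\ge r_0$ then $\mathbf x$ lies in an infinite end with $\langle y(\mathbf x),\eta(\mathbf x)\rangle_{\mathbb R^d}$ either $\ge 0$, so $\mathbf x\in\mathcal{E}_+$ and the forward trajectory escapes (contradicting $\mathbf x\in\Gamma_-$), or $<0$, so $\mathbf x\in\mathcal{E}_-^\circ$ and the backward trajectory escapes (contradicting $\mathbf x\in\Gamma_+$); this gives \eqref{e:K-contained}, and $K\cap S^*M$ is a closed subset of the compact set $\{r\le r_0\}\cap S^*M$. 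For part~2, by the a priori bound the backward orbit of $\mathbf x\in\Gamma_+$ lies in the compact set $\{r\le\max(r(\mathbf x),r_0)\}\cap\{p=p(\mathbf x)\}$, so its $\alpha$-limit set $\alpha(\mathbf x)$ is nonempty, compact, $\varphi_t$-invariant, and contained in that same compact set; hence every point of $\alpha(\mathbf x)$ has a bounded full orbit, so lies in $\Gamma_+\cap\Gamma_-=K$, and $\varphi_t(\mathbf x)\to\alpha(\mathbf x)\subset K$ as $t\to-\infty$. Local uniformity in \eqref{e:trapprop-1} is then a routine contradiction argument: if $\mathbf x_n$ runs through a compact subset of $\Gamma_+$, $t_n\to-\infty$, and $d(\varphi_{t_n}(\mathbf x_n),K)\ge\varepsilon$, then by the uniform form of the a priori bound the $\varphi_{t_n}(\mathbf x_n)$ lie in a fixed compact set; a subsequence converges to some $\mathbf y$, and since $r(\varphi_{s+t_n}(\mathbf x_n))$ is uniformly bounded once $s+t_n\le 0$, the full orbit of $\mathbf y$ is bounded, forcing $\mathbf y\in K$ and a contradiction.

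For part~4 (say $V\cap\Gamma_-=\emptyset$), the escape criterion gives for each $\mathbf x\in V$ some $t_0(\mathbf x)\ge 0$ with $\varphi_{t_0(\mathbf x)}(\mathbf x)\in\mathcal{E}_+^\circ$, an open condition in $\mathbf x$; a finite subcover of $V$ produces $T$ such that every $\mathbf x\in V$ satisfies $\varphi_{t_1}(\mathbf x)\in\mathcal{E}_+^\circ$ for some $t_1\le T$, and then \eqref{e:convexinf-1} (with the fact that $\langle y,\eta\rangle_{\mathbb R^d}$ is strictly increasing along the flow in an end) upgrades this to $\varphi_t(\mathbf x)\in\mathcal{E}_+^\circ$ and $r(\varphi_t(\mathbf x))\ge\sqrt{r_0^2+(t-T)^2}$ for all $t\ge T$, which is \eqref{e:trapprop-3}. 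Closedness of $\bigcup_{t\ge 0}\varphi_t(V)$ follows by writing it as the union of the compact set $\bigcup_{0\le t\le T}\varphi_t(V)$ with $\bigcup_{t\ge T}\varphi_t(V)$, on which $r\ge\sqrt{r_0^2+(t-T)^2}$: a convergent sequence in this union either has bounded time parameter, so its limit lies in the compact piece, or has time parameter tending to $\infty$, which is impossible since then $r$ of the terms tends to $\infty$. Finally, part~3 is proved by contradiction: suppose $\mathbf z_n\notin U$ (take $U$ open without loss of generality), $t_n,s_n\to\infty$, and $\varphi_{t_n}(\mathbf z_n),\varphi_{-s_n}(\mathbf z_n)\in V$. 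If $r(\mathbf z_n)\to\infty$ along a subsequence then $\mathbf z_n$ lies deep in an end, hence in $\mathcal{E}_+$ (so the forward trajectory escapes, contradicting $\varphi_{t_n}(\mathbf z_n)\in V$) or in $\mathcal{E}_-^\circ$ (so the backward trajectory escapes, contradicting $\varphi_{-s_n}(\mathbf z_n)\in V$); hence $r(\mathbf z_n)$ is bounded, and since $p(\mathbf z_n)$ is pinched between its extrema on $V$, the $\mathbf z_n$ lie in a compact subset of $T^*M\setminus 0$. Passing to a subsequence $\mathbf z_n\to\mathbf z_\infty\notin U$; if $\mathbf z_\infty\notin\Gamma_+$ then by the escape criterion $\varphi_{-t_0}(\mathbf z_\infty)\in\mathcal{E}_-^\circ$ for some $t_0$, an open condition, so $\varphi_{-t_0}(\mathbf z_n)\in\mathcal{E}_-^\circ$ for $n$ large and thus $r(\varphi_{-s_n}(\mathbf z_n))\to\infty$, contradicting $\varphi_{-s_n}(\mathbf z_n)\in V$; symmetrically $\mathbf z_\infty\in\Gamma_-$, so $\mathbf z_\infty\in K\subset U$, which contradicts $\mathbf z_\infty\notin U$ and proves \eqref{e:trapprop-2}.

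The only nonsoft ingredient is the pair of reductions in the first step --- the elementary but slightly fiddly bookkeeping of how a trajectory can enter and leave an infinite end, encoded in \eqref{e:convexinf-1}--\eqref{e:convexinf-2} --- and once these are in place parts~1--4 are standard limit-set and compactness arguments, so that is where I expect to spend most of the care; see \cite[\S6.1]{ZwScat} for the analogous statements in a more general setting.
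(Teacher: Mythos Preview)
Your proof is correct and follows essentially the same route as the paper: isolate the escape criterion and the a priori bound $r(\varphi_t(\mathbf x))\le\max(r(\mathbf x),r_0)$ for $\mathbf x\in\Gamma_\pm$, then run the obvious compactness/contradiction arguments for each part. The only organizational difference is in part~2: you first establish the pointwise statement via the $\alpha$-limit set of a single backward orbit and then upgrade to local uniformity by contradiction, whereas the paper goes directly to the locally uniform statement by a single contradiction argument (taking $\mathbf x_k\to\mathbf x_\infty\in\Gamma_-\setminus\Gamma_+$ and using the escape criterion to force $r(\mathbf x_k)\to\infty$). Both are fine; your version has the mild advantage of making the dynamical content (bounded full orbit $\Rightarrow$ trapped) explicit.
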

\begin{proof}
1. We first show that $\Gamma_-$ is closed in $T^*M\setminus 0$. Assume
that $\mathbf x_0\in T^*M\setminus 0$ and $\mathbf x_0\notin\Gamma_-$. Then
$r(\varphi_t(\mathbf x_0))\to\infty$ as $t\to \infty$, thus by~\eqref{e:convexinf-2}
there exists $t_0>0$ such that
$\varphi_{t_0}(\mathbf x_0)\in\mathcal E_+^\circ$.
Since $\mathcal E_+^\circ$ is open,
we have $\varphi_{t_0}(\mathbf x)\in\mathcal E_+^\circ$
for all $\mathbf x$ which are sufficiently close to $\mathbf x_0$.
By~\eqref{e:convexinf-1}, we have $\mathbf x\notin \Gamma_-$, showing that
$\mathbf x_0$ does not lie in the closure of $\Gamma_-$.
A similar argument shows that $\Gamma_+$, and thus $K$, is closed.

It remains
to show~\eqref{e:K-contained}. Assume that $\mathbf x\in T^*M\setminus 0$
and $r(\mathbf x)\geq r_0$. If $\langle y(\mathbf x),\eta(\mathbf x)\rangle_{\mathbb R^d}\geq 0$,
then by~\eqref{e:convexinf-1} we have $\mathbf x\notin\Gamma_-$. Similarly
if $\langle y(\mathbf x),\eta(\mathbf x)\rangle_{\mathbb R^d}\leq 0$,
then $\mathbf x\notin\Gamma_+$.

\noindent 2. We consider the case of $\Gamma_-$; the case
of $\Gamma_+$ is handled similarly. Assume~\eqref{e:trapprop-1} is false.
Then there exists $\varepsilon>0$ and sequences
$\mathbf x_k\in\Gamma_-$, $t_k\to\infty$ such that
$\mathbf x_k$ lie in a compact subset of $T^*M\setminus 0$
and $d(\varphi_{t_k}(\mathbf x_k),K)>\varepsilon$.
By~\eqref{e:convexinf-1} and~\eqref{e:convexinf-2},
$\mathbf x_k\in\Gamma_-$ implies that
$r(\varphi_{t_k}(\mathbf x_k))$ is bounded, specifically
$$
r(\varphi_{t_k}(\mathbf x_k))\leq \max(r(\mathbf x_k),r_0)\quad\text{when }t_k\geq 0.
$$
By passing to a subsequence, we may assume that
$$
\varphi_{t_k}(\mathbf x_k)\to \mathbf x_{\infty}\in T^*M\setminus 0.
$$
We have $\mathbf x_{\infty}\notin K$; however, since $\Gamma_-$
is closed and invariant under the flow, $\mathbf x_\infty\in\Gamma_-$.
Therefore $\mathbf x_{\infty}\notin\Gamma_+$. By~\eqref{e:convexinf-2},
there exists $T>0$ such that
$\varphi_{-T}(\mathbf x_{\infty})\in \mathcal E_-^\circ$.
Then for large enough $k$,
$\varphi_{t_k-T}(\mathbf x_k)\in\mathcal E_-^\circ$.
It follows from~\eqref{e:convexinf-1} applied to $\varphi_{t_k-T}(\mathbf x_k)$ that as $k\to\infty$,
$$
r(\mathbf x_k)=r\big(\varphi_{-(t_k-T)}(\varphi_{t_k-T}(\mathbf x_k))\big)\ \geq\ \sqrt{r_0^2+(t_k-T)^2}\ \to\ \infty,
$$
contradicting the fact that $\mathbf x_k$ varies in a compact set.

\noindent 3. Assume~\eqref{e:trapprop-2} is false. Then there exist sequences
$$
t_k,s_k\to\infty,\quad
\mathbf x_k\in \varphi_{-t_k}(V)\cap\varphi_{s_k}(V),\quad
\mathbf x_k\notin U.
$$
By~\eqref{e:convexinf-1}, assuming $t_k,s_k\geq 0$, we have
$$
r(\mathbf x_k)\leq \max(\max\nolimits_V r,r_0).
$$
Passing to a subsequence, we may assume
$$
\mathbf x_k\to \mathbf x_\infty\in T^*M\setminus 0.
$$
We have $\mathbf x_\infty\notin K$, thus $\mathbf x_\infty\notin\Gamma_+$
or $\mathbf x_\infty\notin\Gamma_-$. We assume $\mathbf x_\infty\notin\Gamma_-$,
the other case being handled similarly. By~\eqref{e:convexinf-2}, there exists
$T>0$ such that
$\varphi_T(\mathbf x_\infty)\in\mathcal E_+^\circ$. Therefore,
for $k$ large enough we have
$\varphi_T(\mathbf x_k)\in\mathcal E_+^\circ$.
It follows from~\eqref{e:convexinf-1} applied to $\varphi_T(\mathbf x_k)$ that as $k\to\infty$,
$$
r(\varphi_{t_k}(\mathbf x_k))=r\big(\varphi_{t_k-T}(\varphi_T(\mathbf x_k))\big)
\ \geq\ \sqrt{r_0^2+(t_k-T)^2}\ \to \infty
$$
contradicting the fact that $\varphi_{t_k}(\mathbf x_k)\in V$.

\noindent 4. We assume $V\cap \Gamma_-=\emptyset$, the case
$V\cap\Gamma_+=\emptyset$ being handled similarly.
Arguing as in part~1, we see that each $\mathbf x_0\in V$
has an open neighborhood $U(\mathbf x_0)$ such that for some
$T=T(\mathbf x_0)>0$ and all $\mathbf x\in U(\mathbf x_0)$, we have
$\varphi_T(\mathbf x)\in\mathcal E_+^\circ$. By~\eqref{e:convexinf-1}
applied to $\varphi_T(\mathbf x)$,
$$
\varphi_t(\mathbf x)\in \mathcal E_+^\circ\cap \Big\{r\geq \sqrt{r_0^2+(t-T(\mathbf x_0))^2}\Big\}\quad\text{for all }
\mathbf x\in U(\mathbf x_0),\
t\geq T(\mathbf x_0).
$$
To show~\eqref{e:trapprop-3}, it remains to cover $V$ by finitely many open
sets of the form $U(\mathbf x_0)$ and let $T$ be the maximum of the corresponding times
$T(\mathbf x_0)$.

To show that $\bigcup_{t\geq 0} \varphi_t(V)$ is closed,
take sequences $\mathbf x_j\in V$, $t_j\geq 0$,
and assume that $\varphi_{t_j}(\mathbf x_j)$
converges to some $\mathbf y_\infty\in T^*M$. Then $r(\varphi_{t_j}(\mathbf x_j))$
is bounded, so by~\eqref{e:trapprop-3} the sequence $t_j$ is bounded as well.
Passing to subsequences, we may assume that $t_j\to t_\infty\geq 0$,
$\mathbf x_j\to \mathbf x_\infty\in V$. Then
$\mathbf y_\infty =\varphi_{t_\infty}(\mathbf x_\infty)\in \bigcup_{t\geq 0} \varphi_t(V)$,
finishing the proof.
\end{proof}
Following~\eqref{e:V-t} we define for $\mathcal B\subset M$
$$
\mathcal V_{\mathcal B}(t):=\mu_L(S^*M\cap \mathcal T_{\mathcal B}(t)),\quad
\mathcal T_{\mathcal B}(t):=\pi^{-1}(\mathcal B)\cap \varphi_{-t}(\pi^{-1}(\mathcal B)).
$$
By~\eqref{e:trapprop-2}, if $\pi^{-1}(\mathcal B)$ contains a neighborhood of $K$
and $\mathcal B'\subset M$ is compact, then there exists $T>0$ such that
$$
\mathcal T_{\mathcal B'}(t+2T)\ \subset\ \varphi_{-T}(\mathcal T_{\mathcal B}(t)),\quad
t\geq 0,
$$
thus in particular
\begin{equation}
  \label{e:B-does-not-matter}
\mathcal V_{\mathcal B'}(t+2T)\leq \mathcal V_{\mathcal B}(t),\quad
t\geq 0.
\end{equation}
Since Theorems~\ref{thm:fractalWeyl} and~\ref{thm:decayOnAverage}
use quantities of the form $\mathcal V((1-\varepsilon)t)$ where
$t\geq C^{-1}\log R$, by slightly changing~$\varepsilon$
and using~\eqref{e:B-does-not-matter} we see that
these theorems do not depend on the choice of $\mathcal B$,
as long as $\pi^{-1}(\mathcal B)$ contains a neighborhood of $K$.
We henceforth fix $r_1>r_0$ and put
\begin{equation}
  \label{e:the-set-B}
\mathcal B:=\{r\leq r_1\}.
\end{equation}
By~\eqref{e:convexinf-1}, the set $\mathcal B$ is geodesically convex, therefore
$$
\mathcal T_{\mathcal B}(t+t_0)\subset \varphi_{-t_0}\big(\mathcal T_{\mathcal B}(t)\big)\quad\text{for all }t,t_0\geq 0,
$$
implying that
\begin{equation}
  \label{e:volume-decreasing}
\mathcal V_{\mathcal B}(t+t_0)\leq \mathcal V_{\mathcal B}(t)\quad\text{for all }t,t_0\geq 0.  
\end{equation}
Moreover, if $K\cap S^*M\neq \emptyset$, then we have for each $\Lambda>\Lambda_{\max}$,
\begin{equation}
  \label{e:volume-nonzero}
\mathcal V_{\mathcal B}(t)\geq C^{-1}e^{-2(d-1)\Lambda t},\quad
t\geq 0.
\end{equation}
Indeed, if $(x_0,\xi_0)\in K\cap S^*M$, then $\mathcal T_{\mathcal B}(t)\cap S^*M$ contains
an $e^{-\Lambda t}$ sized neighborhood of $\varphi_s(x_0,\xi_0)$ for all
$s\in [0,1]$.

\subsection{Semiclassical analysis}
  \label{s:semi}

We next briefly review the tools from semiclassical analysis used in this paper,
referring the reader to~\cite{EZB} and~\cite[Appendix~E]{ZwScat} for a comprehensive
introduction to the subject.

For an $h$-dependent family of smooth functions $a(x,\xi;h)$ on $T^*M$,
we say that $a$ lies in the symbol class $S^m_{h,\nu}(T^*M)$ if
it satisfies the following derivative bounds on $T^*M$, uniformly in $h$:
$$
|\partial^\alpha_y\partial^\beta_\eta a(y,\eta;h)|\leq C_{\alpha\beta} h^{-\nu(|\alpha|+|\beta|)}
\langle\eta\rangle^{m-|\beta|}.
$$
Here $\nu\in [0,1/2)$ and $m\in\mathbb R$ are parameters;
$y$ is any coordinate system on $M$ which coincides with the Euclidean
coordinate in each infinite end. Note that we require the bounds
to be uniform as $y\to\infty$.

We fix a quantization procedure $\Op_h$, mapping each $a\in S^m_{h,\nu}(T^*M)$
to an $h$-dependent family of operators
$$
\Op_h(a):\ \mathscr S(M)\to \mathscr S(M),\quad
\mathscr S'(M)\to\mathscr S'(M).
$$
Here $\mathscr S(M)$ denotes the space of Schwartz functions
and $\mathscr S'(M)$ the space of tempered distributions on~$M$,
defined using Euclidean coordinates in the infinite ends.
In case $M=\mathbb R^d$, $\Op_h(a)$ is defined by the standard formula
\begin{equation}
  \label{e:standard-op}
\Op_h(a)u(x)=(2\pi h)^{-d}\int_{\mathbb R^{2d}}e^{{i\over h}\langle x-y,\xi\rangle}a(x,\xi)u(y)\,dyd\xi,
\end{equation}
and for general $M$ it is constructed from~\eqref{e:standard-op} using coordinate charts (taking
the Euclidean coordinate in each infinite end of $M$) and a partition of unity,
see for instance~\cite[Proposition~E.14]{ZwScat}. We also arrange so that
\begin{equation}
  \label{e:op-h-1}
\Op_h(1)=I.
\end{equation}
This gives a class of operators
(which is independent of the choice of coordinate charts; see below
for the definition of $h^\infty\Psi^{-\infty}(M)$)
$$
\Psi^{m}_{h,\nu}(M)=\{\Op_h(a)+\mathcal O(h^\infty)_{\Psi^{-\infty}(M)}\colon a\in S^m_{h,\nu}(T^*M)\}.
$$
The principal symbol map
$$
\sigma_h:\Psi^m_{h,\nu}(M)\to {S^m_{h,\nu}(T^*M)/ h^{1-2\nu} S^{m-1}_{h,\nu}(T^*M)},\quad
\sigma_h(\Op_h(a))=a,
$$
is independent of the choice of local coordinates and satisfies for
$A\in\Psi^m_{h,\nu}(M)$, $B\in\Psi^{m'}_{h,\nu}(M)$
\begin{align}
  \label{e:bca-1}
\sigma_h(A^*)&=\overline{\sigma_h(A)}+\mathcal O(h^{1-2\nu})_{S^{m-1}_{h,\nu}},\\
  \label{e:bca-2}
\sigma_h(AB)&=\sigma_h(A)\sigma_h(B)+\mathcal O(h^{1-2\nu})_{S^{m+m'-1}_{h,\nu}},\\
  \label{e:bca-3}
\sigma_h([A,B])&=-ih\{\sigma_h(A),\sigma_h(B)\}+\mathcal O(h^{2(1-2\nu)})_{S^{m+m'-2}_{h,\nu}}.
\end{align}
We have $\sigma_h(A)=0$ if and only if $A\in h^{1-2\nu}\Psi^{m-1}_{h,\nu}(M)$.
Every $A\in\Psi^m_{h,\nu}(M)$ is bounded uniformly in $h$ as an operator
$$
A:H^{s}_h(M)\to H^{s-m}_h(M),\quad
s\in\mathbb R,
$$
where $H^s_h(M)$ is the (global) semiclassical Sobolev space, defined using
Euclidean coordinates in the infinite ends (see~\cite[\S E.1.6]{ZwScat}).
See for instance~\cite[Theorems~4.14, 9.5, 14.1, 14.2]{EZB} for the proofs in the case $\nu=0$,
which adapt directly to the case of general $\nu$ (see~\cite[Theorems~4.17, 4.18]{EZB}).
We also have for all $A\in\Psi^0_{h,\nu}(M)$,
\begin{equation}
  \label{e:opernorm}
\|A\|_{L^2(M)\to L^2(M)}\leq \sup |\sigma_h(A)|+\mathcal O(h^{1/2-\nu}).
\end{equation}
See for instance~\cite[Theorem~5.1]{EZB} whose proof adapts
to operators in $\Psi^0_{h,\nu}$.
Using the explicit formula for the integral kernel of $\Op_h(a)$,
we also have the Hilbert--Schmidt bound
\begin{equation}
  \label{e:hsnorm}
\|\Op_h(a)\|_{\HS}^2\leq C^2h^{-d} \Vol(\supp a),\quad
a\in S^0_{h,\nu}.
\end{equation}
where $C$ is some $S^0_{h,\nu}$ seminorm of $a$.

The residual class for $S^m_{h,\nu}(M)$, denoted by $h^\infty\Psi^{-\infty}(M)$
or $\mathcal O(h^\infty)_{\Psi^{-\infty}(M)}$,
is defined as follows:
$$
A\in h^\infty\Psi^{-\infty}(M)\quad\Longleftrightarrow\quad
\|A\|_{H^{-N}_h(M)\to H^N_h(M)}\leq C_Nh^N\quad\text{for all }N.
$$
We also use the class of compactly microlocalized operators
$$
\Psi^{\comp}_{h,\nu}(M)=\{A=\Op_h(a)+\mathcal O(h^\infty)_{\Psi^{-\infty}}\mid
a\in \Cc(T^*M)\}.
$$
The standard classes of symbols and operators are given by the case $\nu=0$:
$$
S^m_h(T^*M):=S^m_{h,0}(T^*M),\quad
\Psi^m_h(M):=\Psi^m_{h,0}(M),\quad
\Psi^{\comp}_h(M):=\Psi^{\comp}_{h,0}(M).
$$
We have the following improvement of~\eqref{e:bca-3} when
$M=\mathbb R^d$, the quantization~\eqref{e:standard-op} is used,
and one of the symbols in question is in $S^m_h$:
\begin{equation}
  \label{e:bca-4}
a\in S^m_h(T^*\mathbb R^d),
b\in S^{m'}_{h,\nu}(T^*\mathbb R^d)\ \Longrightarrow\
[\Op_h(a),\Op_h(b)]=-ih\Op_h(\{a,b\})+\mathcal O(h^{2-2\nu})_{\Psi^{m+m'-2}_{h,\nu}(\mathbb R^d)}.
\end{equation}
This follows immediately from the
asymptotic expansion for the full symbol of $\Op_h(a)\Op_h(b)$, see~\cite[Theorems~4.14, 4.17]{EZB}.

For $A\in\Psi^m_{h,\nu}(M)$, the wavefront set
$\WFh(A)\subset \overline T^*M$ is defined as follows:
$(x_0,\xi_0)\in \overline T^*M$ does \emph{not} lie in $\WFh(A)$
if and only if $A=\Op_h(a)+\mathcal O(h^\infty)_{\Psi^{-\infty}}$
for some $a\in S^m_{h,\nu}(M)$ such that
$a=\mathcal O(h^\infty\langle\xi\rangle^{-\infty})$
in a neighborhood of $(x_0,\xi_0)$ in $\overline T^*M$.
Here $\overline T^*M$ is the fiber-radially compactified
cotangent bundle, see for instance~\cite[\S\S E.1.2, E.2.1]{ZwScat}.
For $A,B\in\Psi^m_{h,\nu}(M)$ and some $h$-independent open set
$U\subset \overline T^*M$, we say
$$
A=B+\mathcal O(h^\infty)_{\Psi^{-\infty}}\quad\text{microlocally in }U,
$$
if $\WFh(A-B)\cap U=\emptyset$.
For $A\in\Psi^m_{h,\nu}(M)$, the elliptic set $\Ell_h(A)\subset \overline T^*M$ is defined
as follows: $(x,\xi)\in\Ell_h(A)$ if $\langle\xi\rangle^{-m}\sigma_h(A)$
is bounded away from zero in a neighborhood of $(x,\xi)$.

\subsection{Functional calculus and the half-wave propagator}
\label{s:funcal}

By the functional calculus of self-adjoint operators in $\Psi^m_h(M)$
(see for instance~\cite[\S8]{d-s}), for each $\psi\in \Cc(\mathbb R)$ the operator
$$
\psi(-h^2\Delta_g):L^2(M)\to L^2(M)
$$
lies in $\Psi^{-N}_h(M)$ for each $N$. Moreover,
$$
\sigma_h(\psi(-h^2\Delta_g))=\psi\big(|\xi|_g^2\big),\quad
\WFh(\psi(-h^2\Delta_g))\subset \{|\xi|_g^2\in\supp\psi\},
$$
and for each open set $U\subset \mathbb R$,
\begin{equation}
  \label{e:vaelor}
\psi=1\text{ on }U\quad\Longrightarrow\quad
\psi(-h^2\Delta_g)=I+\mathcal O(h^\infty)_{\Psi^{-\infty}}\text{ microlocally in }\{|\xi|_g^2\in U\}.
\end{equation}
This makes it possible to describe
the square root $\sqrt{-\Delta_g}$ microlocally in $T^*M\setminus 0$:
\begin{lemma}
  \label{l:sqrt}
Assume that $A\in\Psi^{\comp}_h(M),\WFh(A)\subset T^*M\setminus 0$.
Then for each $N$, with $p(x,\xi)=|\xi|_{g(x)}$,
\begin{equation}
  \label{e:sqrt}
\begin{gathered}
h\sqrt{-\Delta_g}A,\ Ah\sqrt{-\Delta_g}\ \in\ \Psi^{-N}_h(M),\quad
\sigma_h(h\sqrt{-\Delta_g}A)=\sigma_h(Ah\sqrt{-\Delta_g})=p\cdot\sigma_h(A);\\
\WFh(h\sqrt{-\Delta_g}A),\WFh(Ah\sqrt{-\Delta_g})\subset\WFh(A).
\end{gathered}
\end{equation}
\end{lemma}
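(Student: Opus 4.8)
The plan is to reduce the statement about $h\sqrt{-\Delta_g}$ to the already-established functional calculus for $\psi(-h^2\Delta_g)$ by inserting a microlocal cutoff that respects the support condition $\WFh(A)\subset T^*M\setminus 0$. First I would choose $\chi\in\Cc(\mathbb R)$ supported away from $0$ such that $\chi=1$ on a neighborhood of $\{|\xi|_g^2:(x,\xi)\in\WFh(A)\}$ (which is a compact subset of $(0,\infty)$ since $A\in\Psi^{\comp}_h$ and $\WFh(A)\subset T^*M\setminus 0$), and additionally pick $\chi$ so that $\supp\chi\subset(0,\infty)$. Then define the smooth function $f(s):=\sqrt{s}\,\chi(s)\in\Cc(\mathbb R)$; note $f$ is genuinely smooth and compactly supported because $\chi$ vanishes near $s=0$ where $\sqrt{s}$ fails to be smooth. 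By the functional calculus quoted above, $f(-h^2\Delta_g)\in\Psi^{-N}_h(M)$ for every $N$, with $\sigma_h(f(-h^2\Delta_g))=f(|\xi|_g^2)=|\xi|_g\,\chi(|\xi|_g^2)$ and $\WFh(f(-h^2\Delta_g))\subset\{|\xi|_g^2\in\supp\chi\}$.

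Next I would write $h\sqrt{-\Delta_g}A=f(-h^2\Delta_g)\cdot A + \big(h\sqrt{-\Delta_g}-f(-h^2\Delta_g)\big)A$ and argue the second term is $\mathcal O(h^\infty)_{\Psi^{-\infty}}$. Indeed $h\sqrt{-\Delta_g}-f(-h^2\Delta_g)=g(-h^2\Delta_g)$ where $g(s):=\sqrt{s}\,(1-\chi(s))$; although $g$ is not compactly supported, on the spectral side $g$ is supported in $\{s\ge 0\}\setminus(\text{nbhd of }\sigma_h(A)\text{-support})$, and since $\chi\equiv1$ near the relevant frequencies, by~\eqref{e:vaelor} we have $\chi(-h^2\Delta_g)=I+\mathcal O(h^\infty)_{\Psi^{-\infty}}$ microlocally on a neighborhood of $\WFh(A)$, hence $(I-\chi(-h^2\Delta_g))A=\mathcal O(h^\infty)_{\Psi^{-\infty}}$ by the composition/wavefront calculus. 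Multiplying on the left by the (unbounded but spectrally controlled) operator $\sqrt{-\Delta_g}$ requires care; the clean way is to instead factor $h\sqrt{-\Delta_g}A = f(-h^2\Delta_g)\,\tilde\chi(-h^2\Delta_g)\,A + \mathcal O(h^\infty)_{\Psi^{-\infty}}$ directly, where $\tilde\chi\in\Cc((0,\infty))$ equals $1$ near $\WFh(A)$'s frequencies and $f\equiv\sqrt{s}$ on $\supp\tilde\chi$, so that $f(-h^2\Delta_g)\tilde\chi(-h^2\Delta_g)$ and $h\sqrt{-\Delta_g}\tilde\chi(-h^2\Delta_g)$ agree exactly (both equal $h\sqrt{-\Delta_g}\tilde\chi(-h^2\Delta_g)$ by the functional calculus identity $f\cdot\tilde\chi=\sqrt{\cdot}\,\tilde\chi$). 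Then $h\sqrt{-\Delta_g}A = f(-h^2\Delta_g)\tilde\chi(-h^2\Delta_g)A + h\sqrt{-\Delta_g}(I-\tilde\chi(-h^2\Delta_g))A$, and the last term is $\mathcal O(h^\infty)_{\Psi^{-\infty}}$ since $(I-\tilde\chi(-h^2\Delta_g))A$ is, and $h\sqrt{-\Delta_g}$ composed with a smoothing operator mapping into $\bigcap_N H^N_h$ remains smoothing (using that $(-h^2\Delta_g)^k(I-\tilde\chi(-h^2\Delta_g))A=\mathcal O(h^\infty)_{\Psi^{-\infty}}$ for all $k$).

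With that reduction in hand, $h\sqrt{-\Delta_g}A = f(-h^2\Delta_g)\tilde\chi(-h^2\Delta_g)A + \mathcal O(h^\infty)_{\Psi^{-\infty}}$ exhibits $h\sqrt{-\Delta_g}A$ as an element of $\Psi^{-N}_h(M)$ for all $N$, being a product of operators in $\Psi^{-N}_h$ and $\Psi^{\comp}_h$ respectively. The symbol identity follows from~\eqref{e:bca-2}: $\sigma_h(h\sqrt{-\Delta_g}A)=\sigma_h(f(-h^2\Delta_g))\,\sigma_h(\tilde\chi(-h^2\Delta_g))\,\sigma_h(A)$, and since $\tilde\chi(|\xi|_g^2)=1$ and $f(|\xi|_g^2)=|\xi|_g=p$ on $\WFh(A)$ (where the symbol of $A$ is concentrated, all other points contributing nothing to the principal symbol), this equals $p\cdot\sigma_h(A)$. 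The wavefront containment $\WFh(h\sqrt{-\Delta_g}A)\subset\WFh(A)$ follows from the general fact $\WFh(BC)\subset\WFh(B)\cap\WFh(C)$ applied to $B=f(-h^2\Delta_g)\tilde\chi(-h^2\Delta_g)$, $C=A$. The case of $Ah\sqrt{-\Delta_g}$ is identical by symmetry (or by taking adjoints), using that $\sqrt{-\Delta_g}$ is self-adjoint; one writes $Ah\sqrt{-\Delta_g}=A\tilde\chi(-h^2\Delta_g)f(-h^2\Delta_g)+\mathcal O(h^\infty)_{\Psi^{-\infty}}$ and proceeds as before.

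The main obstacle I anticipate is purely bookkeeping rather than conceptual: justifying that left-composing the unbounded operator $h\sqrt{-\Delta_g}$ with an $\mathcal O(h^\infty)_{\Psi^{-\infty}}$ remainder (coming from $(I-\tilde\chi(-h^2\Delta_g))A$) still yields something in $\mathcal O(h^\infty)_{\Psi^{-\infty}}$. This is handled by noting that $(I-\tilde\chi(-h^2\Delta_g))A$ and all of its compositions with powers of $-h^2\Delta_g$ are $\mathcal O(h^\infty)_{\Psi^{-\infty}}$ — equivalently, the operator maps $H^{-N}_h$ into $H^{M}_h$ with $\mathcal O(h^\infty)$ norm for every $N,M$ — together with the spectral estimate $\|h\sqrt{-\Delta_g}u\|_{H^s_h}\le\|u\|_{H^{s+1}_h}$; no exotic calculus is needed here since everything takes place at $\nu=0$.
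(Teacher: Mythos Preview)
Your proof is correct and follows essentially the same route as the paper: reduce to the compactly supported functional calculus by inserting a spectral cutoff $\psi\in\Cc((0,\infty))$ equal to $1$ near the frequencies of $\WFh(A)$, and observe that $h\sqrt{-\Delta_g}\,\psi(-h^2\Delta_g)=\varphi(-h^2\Delta_g)$ with $\varphi(\lambda)=\sqrt{\lambda}\,\psi(\lambda)\in\Cc(\mathbb R)$. The paper's version is slightly leaner in that a single cutoff suffices (your $\tilde\chi$ alone does the job; the extra $\chi$ in $f$ is redundant once $\tilde\chi$ is present), and the paper simply writes $A=\psi(-h^2\Delta_g)A+\mathcal O(h^\infty)_{\Psi^{-\infty}}$ first and then multiplies by $h\sqrt{-\Delta_g}$, tacitly using the same Sobolev mapping argument you spell out for the remainder.
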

\begin{proof}
We consider the case of the operator $h\sqrt{-\Delta_g}A$.
Fix $C>0$ such that $\WFh(A)\subset \{C^{-1}\leq |\xi|_g^2\leq C\}$.
Choose $\psi\in\Cc((0,\infty))$ such that 
$\psi=1$ near $[C^{-1},C]$. Then by~\eqref{e:vaelor}
$$
A=\psi(-h^2\Delta_g)A+\mathcal O(h^\infty)_{\Psi^{-\infty}}.
$$
Put $\varphi(\lambda)=\sqrt{\lambda}\psi(\lambda)$, then $\varphi\in\Cc(\mathbb R)$
and
$$
h\sqrt{-\Delta_g}A=\varphi(-h^2\Delta_g)A+\mathcal O(h^\infty)_{\Psi^{-\infty}}
$$
and~\eqref{e:sqrt} follows.
\end{proof}
We next prove a Egorov theorem for the half-wave propagator
$$
U(t)=\exp(-it\sqrt{-\Delta_g}):L^2(M)\to L^2(M).
$$
Recall that $\varphi_t=\exp(tH_p)$ is the homogeneous geodesic flow on $T^*M\setminus 0$.
\begin{lemma}
  \label{l:egorov}
Assume that $a\in S^0_{h,\nu}(T^*M)$ for some $\nu\in[0,1/2)$
and $\supp a$ is contained in an $h$-independent compact subset of $T^*M\setminus 0$.
Then there exists a smooth family of symbols compactly supported in $T^*M\setminus 0$
$$
a_t\in S^0_{h,\nu}(T^*M),\quad
t\in\mathbb R;\quad
\supp a_t\subset \varphi_{-t}(\supp a),\quad
a_t=a\circ \varphi_t+\mathcal O(h^{1-2\nu})_{S^0_{h,\nu}},
$$
such that, with constants in the remainder uniform as long as $t$ is in a bounded set
$$
U(-t)\Op_h(a)U(t)=\Op_h(a_t)+\mathcal O(h^\infty)_{\Psi^{-\infty}}.
$$
\end{lemma}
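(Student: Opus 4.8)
The plan is to prove the Egorov theorem in Lemma~\ref{l:egorov} by differentiating the conjugated operator in $t$ and solving the resulting transport equation at the level of symbols, while carefully tracking the exotic parameter $\nu$. First I would reduce to the case $M=\mathbb R^d$ with the standard quantization~\eqref{e:standard-op}: since $\supp a$ is a compact subset of $T^^*M\setminus 0$, after inserting cutoffs $\psi(-h^2\Delta_g)$ as in Lemma~\ref{l:sqrt} and using a partition of unity I may work in a single coordinate chart (or in an infinite end), so that the half-wave generator $h\sqrt{-\Delta_g}$ acts, microlocally near $\supp a$, as an operator in $\Psi^{1}_h(\mathbb R^d)$ with principal symbol $p(x,\xi)=|\xi|_g$ and a full symbol in the nice (non-exotic, $\nu=0$) calculus. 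This is the crucial structural point: the \emph{generator} lives in the standard calculus even though the symbols we propagate will be exotic, which is exactly the situation covered by the improved composition formula~\eqref{e:bca-4}.

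Next I would set $B(t):=U(-t)\Op_h(a)U(t)$ and compute
\[
\partial_t B(t)=\frac{i}{h}\,U(-t)\big[h\sqrt{-\Delta_g},\Op_h(a)\big]U(t).
\]
Using~\eqref{e:bca-4} with the generator's symbol in $S^1_h$ and $a\in S^0_{h,\nu}$, the commutator equals $-ih\Op_h(\{p,a\})$ modulo $h^{2-2\nu}\Psi^{-\infty}$, so $\partial_t B(t)=U(-t)\Op_h(H_p a)U(t)+\mathcal O(h^{1-2\nu})$ conjugated. This motivates the ansatz: construct $a_t$ by solving the transport equation $\partial_t a_t = H_p a_t$ with $a_0=a$ at leading order, which gives $a_t^{(0)}=a\circ\varphi_t$, and then add lower-order corrections $h^{1-2\nu}a_t^{(1)}+h^{2(1-2\nu)}a_t^{(2)}+\cdots$ solving inhomogeneous transport equations along $\varphi_t$, Borel-summing the (finite, for each fixed $t$-range) asymptotic series. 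The key point to check is that each $a_t^{(j)}$ stays in $S^0_{h,\nu}$ with uniform seminorms for $t$ in a bounded set: here one uses that the flow $\varphi_t$ and its derivatives are bounded on $t$-bounded sets, so composition with $\varphi_t$ costs only $h$-independent constants, and the support statement $\supp a_t\subset\varphi_{-t}(\supp a)$ follows because the transport equations propagate support along $\varphi_t$.

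Finally, I would close the argument by an energy/Duhamel estimate: letting $R(t):=B(t)-\Op_h(a_t)$, the construction arranges $\partial_t R(t)=\frac{i}{h}U(-t)[h\sqrt{-\Delta_g},\cdot]U(t)$ applied appropriately so that $\partial_t R(t) - (\text{conjugated transport})R(t) = \mathcal O(h^\infty)_{\Psi^{-\infty}}$ in operator norm on all semiclassical Sobolev spaces, with $R(0)=\mathcal O(h^\infty)$; since $U(\pm t)$ are unitary on $L^2$ and the relevant operators are compactly microlocalized (so mapping properties on $H^s_h$ are uniform), Gr\"onwall over a bounded $t$-interval gives $R(t)=\mathcal O(h^\infty)_{\Psi^{-\infty}}$. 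The main obstacle I anticipate is bookkeeping the exotic calculus: one must verify that every composition and remainder estimate used is valid in $\Psi^\bullet_{h,\nu}$ with $\nu<1/2$ (so the gains $h^{1-2\nu}$ are genuinely positive and the asymptotic series converges in the Borel sense), and in particular that~\eqref{e:bca-4} — which requires the generator's symbol to be in the \emph{standard} class $S^1_h$ — really applies after the microlocal reduction near $\supp a$; handling the transition region where we cut off to $T^*M\setminus 0$ and patch coordinate charts, without losing the $h^\infty$ errors, is the delicate part.
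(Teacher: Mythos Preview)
Your proposal is correct and follows essentially the same route as the paper: reduce via partition of unity to $M=\mathbb R^d$ with the standard quantization, exploit~\eqref{e:bca-4} (crucially, the generator $h\sqrt{-\Delta_g}$ has a non-exotic symbol so the commutator gains $h^{1-2\nu}$ rather than $h^{1-3\nu}$), solve iterated transport equations $\partial_t a_t^{(j)}=H_p a_t^{(j)}-b_t^{(j)}$ along $\varphi_t$, and Borel-sum. The paper packages the closing step slightly differently---it directly demands $\partial_t\big(U(t)\Op_h(a_t)U(-t)\big)=\mathcal O(h^\infty)_{\Psi^{-\infty}}$ and integrates, which is equivalent to your Duhamel formulation (conjugating your $R(t)$ by $U(\pm t)$ kills the feedback term, so Gr\"onwall is not actually needed---plain integration suffices).
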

\begin{proof}
Since $U(t)$ is bounded on all Sobolev spaces, it suffices to construct
$a_t$ such that
\begin{equation}
  \label{e:gorov-1}
a_0=a,\quad
d_t\big(U(t)\Op_h(a_t)U(-t)\big)=\mathcal O(h^\infty)_{\Psi^{-\infty}}.
\end{equation}
Using a partition of unity for $a$, it suffices to consider
the case when $\supp a$ is contained in a coordinate chart on $M$. Moreover,
by induction on time we see that it is enough to study the case
when $t$ is small and thus $\varphi_{-s}(a)$ lies in a fixed coordinate chart
for all $s$ between $0$ and $t$. We thus reduce to the case when $M=\mathbb R^d$
and $\Op_h$ is given by~\eqref{e:standard-op}.

The differential equation in~\eqref{e:gorov-1} can be rewritten as  
\begin{equation}  
  \label{e:gorov-2}
\Op_h(\partial_t a_t)+{i\over h}[\Op_h(a_t),h\sqrt{-\Delta_g}]=\mathcal O(h^\infty)_{\Psi^{-\infty}}.
\end{equation}
We construct $a_t$ as an asymptotic series
\begin{equation}
  \label{e:asex}
a_t\sim \sum_{j=0}^\infty a^{(j)}_t,\quad
a^{(j)}_t\in h^{j(1-2\nu)}S^0_{h,\nu}(T^*\mathbb R^d),\quad
\supp a^{(j)}_t\subset \varphi_{-t}(\supp a).
\end{equation}
To satisfy~\eqref{e:gorov-2} it suffices take $a^{(j)}_t$ such that for
some symbols
$$
b^{(j)}_t\in h^{j(1-2\nu)}S^0_{h,\nu}(T^*\mathbb R^d),\quad
\supp b^{(j)}_t\subset \varphi_{-t}(\supp a),\quad
b^{(0)}_t=0,
$$
we have
\begin{equation}  
  \label{e:gorov-3}
\Op_h(\partial_t a^{(j)}_t)+{i\over h}[\Op_h(a_t^{(j)}),h\sqrt{-\Delta_g}]+\Op_h(b_t^{(j)})=\Op_h(b_t^{(j+1)})+\mathcal O(h^\infty)_{\Psi^{-\infty}}.
\end{equation}
We construct $a^{(j)}_t,b^{(j+1)}_t$ by induction, assuming $b^{(j)}_t$ is already known.
Since $a_t^{(j)}$ is compactly supported in $T^*M\setminus 0$,
by Lemma~\ref{l:sqrt} and~\eqref{e:bca-4} the left-hand side of~\eqref{e:gorov-3} is
$$
\Op_h\big(\partial_t a^{(j)}_t-H_pa^{(j)}_t+b^{(j)}_t\big)+\mathcal O(h^{(j+1)(1-2\nu)})_{\Psi^0_{h,\nu}(\mathbb R^d)}.
$$
Then~\eqref{e:gorov-3} holds for some $b^{(j+1)}_t\in h^{(j+1)(1-2\nu)}S^0_{h,\nu}(T^*\mathbb R^d)$
if $a^{(j)}_t$ satisfies the transport equation
\begin{equation}
  \label{e:gorov-4}
\partial_t a^{(j)}_t=H_pa^{(j)}_t-b^{(j)}_t.
\end{equation}
We now put
$$
a^{(j)}_t:=\delta_{j0}(a\circ\varphi_t)-\int_0^t b^{(j)}_s\circ\varphi_{t-s}\,ds.
$$
Then~\eqref{e:gorov-4} is satisfied and thus~\eqref{e:gorov-3} holds for some
choice of $b_t^{(j+1)}$. The support condition on $a^{(j)}_t$ follows from
the support condition on $b^{(j)}_s$. The support
condition on $b_t^{(j+1)}$ follows from this and the fact that the asymptotic
expansion for the full symbol of the left-hand side of~\eqref{e:gorov-3} at each
point only depends on the values of all derivatives of
$a^{(j)}_t,b_t^{(j)}$ at this point. With $a_t$ given by~\eqref{e:asex}
we also have $a_0=a$ and $a_t=a\circ \varphi_t+\mathcal O(h^{1-2\nu})$,
finishing the proof.
\end{proof}
Lemma~\ref{l:egorov} gives us the following approximate inverse statement
for the semiclassical Helmholtz operator $-h^2\Delta_g-\omega^2$, which is a version
of propagation of singularities used in the proof of Lemma~\ref{l:iterated-parametrix}.
\begin{lemma}
  \label{l:basic-parametrix}
Assume that $a,b\in S^0_{h,\nu}(T^*M)$ are supported in an $h$-independent
compact subset of $T^*M\setminus 0$, $B'\in\Psi^{0}_h(M)$ is compactly supported,
and for some $T\geq 0$,
\begin{equation}
  \label{e:basic-parametrix-assumption}
\varphi_{-T}(\supp a)\cap \supp(1-b)=\emptyset,\qquad
\WFh(I-B')\cap \bigcup_{t=0}^T \varphi_{-t}(\supp a)=\emptyset.
\end{equation}
Then for any constant $C$ and $\omega\in [C^{-1},C]+ih[-C,C]$, we have
\begin{equation}
  \label{e:basic-parametrix}
\Op_h(a)=Z(\omega)B'(-h^2\Delta_g-\omega^2)+e^{i\omega T/h}\Op_h(a)U(T)\Op_h(b)+\mathcal O(h^\infty)_{\Psi^{-\infty}}
\end{equation}
where $Z(\omega)$ is holomorphic in $\omega$ and satisfies the estimate for all $N$,
$$
\|Z(\omega)\|_{H_h^{-N}(M)\to H_h^{N}(M)}\leq C_Nh^{-1}|\sup a|.
$$
\end{lemma}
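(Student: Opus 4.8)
The plan is to derive an exact truncated‑resolvent identity for the half‑wave group and then graft on the cutoffs $\Op_h(b)$ and $B'$ using propagation of singularities. Write $U(t)=e^{-(it/h)\sqrt{-h^2\Delta_g}}$. Differentiating $e^{i\omega t/h}U(t)$ in $t$ and integrating from $0$ to $T$ gives the operator identity
\[
I=e^{i\omega T/h}U(T)+\tfrac{i}{h}\,(\sqrt{-h^2\Delta_g}-\omega)\,V_T(\omega),\qquad V_T(\omega):=\int_0^T e^{i\omega t/h}U(t)\,dt.
\]
Because $\Re\omega\in[C^{-1},C]$, the spectral theorem shows that $\sqrt{-h^2\Delta_g}+\omega$ is invertible with $\mathcal O(1)$ operator norm on every $H_h^s(M)$, that $\omega\mapsto(\sqrt{-h^2\Delta_g}+\omega)^{-1}$ is holomorphic, and that $\sqrt{-h^2\Delta_g}-\omega=(\sqrt{-h^2\Delta_g}+\omega)^{-1}(-h^2\Delta_g-\omega^2)$. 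Left‑multiplying the identity by $\Op_h(a)$, using that $V_T(\omega)$, $(\sqrt{-h^2\Delta_g}+\omega)^{-1}$ and $-h^2\Delta_g-\omega^2$ all commute (being functions of $\Delta_g$), and moving $-h^2\Delta_g-\omega^2$ to the far right, one gets
\[
\Op_h(a)=e^{i\omega T/h}\Op_h(a)U(T)+Z(\omega)(-h^2\Delta_g-\omega^2),\qquad Z(\omega):=\tfrac{i}{h}\,\Op_h(a)\,V_T(\omega)\,(\sqrt{-h^2\Delta_g}+\omega)^{-1}.
\]
Here $Z(\omega)$ is visibly holomorphic; since $|e^{i\omega T/h}|\le e^{CT}$ and $U(t)$ is uniformly bounded on $H_h^s(M)$ for $t\in[0,T]$, the factor $V_T(\omega)(\sqrt{-h^2\Delta_g}+\omega)^{-1}$ is $\mathcal O(1)$ on Sobolev spaces, and because $\Op_h(a)$ is microlocalized to a fixed compact subset of $T^*M\setminus0$ its $H_h^{-N}(M)\to H_h^N(M)$ norm is $\le C_N|\sup a|$ by~\eqref{e:opernorm}; hence $\|Z(\omega)\|_{H_h^{-N}\to H_h^N}\le C_Nh^{-1}|\sup a|$, as claimed.

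It then remains to insert $\Op_h(b)$ just after $U(T)$ in the first term and $B'$ just before $-h^2\Delta_g-\omega^2$ in the second, in each case up to $\mathcal O(h^\infty)_{\Psi^{-\infty}}$; both are the same mechanism. By the Egorov theorem of Lemma~\ref{l:egorov}, $\Op_h(a)U(t)=U(t)\Op_h(a_t)+\mathcal O(h^\infty)_{\Psi^{-\infty}}$ with $\supp a_t\subset\varphi_{-t}(\supp a)$, uniformly in $t\in[0,T]$. For the first term, $\Op_h(a)U(T)\bigl(I-\Op_h(b)\bigr)=U(T)\,\Op_h(a_T)\Op_h(1-b)+\mathcal O(h^\infty)$, and $\supp a_T\subset\varphi_{-T}(\supp a)$ is disjoint from $\supp(1-b)$ by the first condition in~\eqref{e:basic-parametrix-assumption}, so this is $\mathcal O(h^\infty)_{\Psi^{-\infty}}$ (product of $\Psi$DOs with disjoint wavefront sets). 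For the second term, $(\sqrt{-h^2\Delta_g}+\omega)^{-1}$ commutes with $U(t)$, so for each $t\in[0,T]$
\[
\Op_h(a)\,U(t)\,(\sqrt{-h^2\Delta_g}+\omega)^{-1}(I-B')=U(t)\,\Op_h(a_t)\,(\sqrt{-h^2\Delta_g}+\omega)^{-1}(I-B')+\mathcal O(h^\infty),
\]
and $\Op_h(a_t)(\sqrt{-h^2\Delta_g}+\omega)^{-1}$ has wavefront set inside $\supp a_t\subset\bigcup_{t=0}^T\varphi_{-t}(\supp a)$, which is disjoint from $\WFh(I-B')$ by the second condition in~\eqref{e:basic-parametrix-assumption}; integrating in $t$ yields $\Op_h(a)V_T(\omega)(\sqrt{-h^2\Delta_g}+\omega)^{-1}=\Op_h(a)V_T(\omega)(\sqrt{-h^2\Delta_g}+\omega)^{-1}B'+\mathcal O(h^\infty)$. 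Multiplying this on the right by $-h^2\Delta_g-\omega^2$ (which preserves $\mathcal O(h^\infty)_{\Psi^{-\infty}}$) and collecting terms gives~\eqref{e:basic-parametrix} with $Z(\omega)$ as above.

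The only genuine work, I expect, is the wavefront‑set bookkeeping. Since $a$ lies in the mildly exotic class $S^0_{h,\nu}$ and the propagation time $T$ is a fixed finite number, one cannot argue purely with symbols; instead one routes everything through Lemma~\ref{l:egorov}, which keeps $U(t)\Op_h(a)U(-t)$ pseudodifferential with symbol support tracked exactly by $\varphi_{-t}$, and then uses the standard fact that a product of $h$‑pseudodifferential operators with disjoint wavefront sets lies in $h^\infty\Psi^{-\infty}$. This is exactly where the two hypotheses in~\eqref{e:basic-parametrix-assumption} enter: the first at the endpoint $t=T$, where $\Op_h(b)$ must absorb $U(T)$, and the second along the whole backward segment $t\in[0,T]$ over which $V_T(\omega)$ integrates, where $I-B'$ must have wavefront set away from $\varphi_{-t}(\supp a)$. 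The remaining steps — the factorization of $\sqrt{-h^2\Delta_g}-\omega$, Sobolev boundedness of $V_T(\omega)$ and $(\sqrt{-h^2\Delta_g}+\omega)^{-1}$, and holomorphy in $\omega$ — are immediate from the spectral theorem together with $\Re\omega\ge C^{-1}>0$.
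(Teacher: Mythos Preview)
Your proof is correct and follows essentially the same route as the paper: the same truncated Duhamel identity $I=e^{i\omega T/h}U(T)+\tfrac{i}{h}\int_0^T e^{i\omega t/h}U(t)(h\sqrt{-\Delta_g}-\omega)\,dt$, the same factorization $h\sqrt{-\Delta_g}-\omega=(h\sqrt{-\Delta_g}+\omega)^{-1}(-h^2\Delta_g-\omega^2)$, and the same use of Lemma~\ref{l:egorov} to absorb $I-\Op_h(b)$ at time $T$ and $I-B'$ along $[0,T]$. The paper's $Z(\omega)$ is exactly your $\tfrac{i}{h}\Op_h(a)V_T(\omega)(h\sqrt{-\Delta_g}+\omega)^{-1}$.
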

\begin{proof}
Observe that 
$$
hD_t\big(e^{i\omega t\over h}U(t)\big)=e^{i\omega t\over h}U(t)(-h\sqrt{-\Delta_g}+\omega),\quad\quad U(0)=I.
$$
Therefore,
\begin{equation}
  \label{e:identity}
\begin{aligned}
I&=e^{i\omega T\over h}U(T)+\frac{i}{h}\int_0^T e^{\frac{i\omega t}{h}}U(t)(h\sqrt{-\Delta_g}-\omega)dt\\
&=e^{\frac{i\omega T}{h}}U(T)+\frac{i}{h}\int_0^Te^{\frac{i\omega t}{h}}U(t)(h\sqrt{-\Delta_g}+\omega)^{-1}(-h^2\Delta_g-\omega^2)dt.
\end{aligned}
\end{equation}
By~\eqref{e:op-h-1}, Lemma~\ref{l:egorov}, and~\eqref{e:basic-parametrix-assumption}, we have
$$
\begin{aligned}
\Op_h(a)U(T)(I-\Op_h(b))&=\mathcal O(h^\infty)_{\Psi^{-\infty}},\\
\Op_h(a)U(t)(h\sqrt{-\Delta_g}+\omega)^{-1}(I-B')&=\mathcal O(h^\infty)_{\Psi^{-\infty}}\quad\text{for all }t\in [0,T],
\end{aligned}
$$
where $U(-t)\Op_h(a)U(t)(h\sqrt{-\Delta_g}+\omega)^{-1}$ is a pseudodifferential
operator similarly to~\eqref{e:sqrt}.
It remains to apply $\Op_h(a)$ on the left to~\eqref{e:identity} and put
$$
Z(\omega):={i\over h}\int_0^Te^{i\omega t\over h}\Op_h(a)U(t)(h\sqrt{-\Delta_g}+\omega)^{-1}\,dt.\qedhere
$$
\end{proof}
We finally establish properties of certain spectral cutoffs of width $h$ for the operator
$h^2\Delta_g$:
\begin{lemma}
  \label{l:h-functional-calculus}
Assume that $\psi\in C^\infty(\mathbb R)$ is bounded and its Fourier transform $\widehat\psi$
satisfies for some $T_0,T_1\in\mathbb R$
\begin{equation}
  \label{e:psi-supported}
\supp\widehat\psi\subset (T_0,T_1).
\end{equation}
For $\omega\in\mathbb C$ varying in an $h$-sized neighborhood of 1, define
$B(\omega):=\psi\big({-h^2\Delta_g-\omega^2\over h}\big):L^2(M)\to L^2(M)$,
where $\psi$ extends to an entire function by~\eqref{e:psi-supported}. Then:

1. If $A_1,A_2\in\Psi^{0}_{h,\nu}(M)$ satisfy
\begin{equation}
  \label{e:h-pseudolocality}
e^{tH_{p^2}}\big(\WFh(A_2)\big)\cap \WFh(A_1)=\emptyset\quad\text{for all }t\in [T_0,T_1],
\end{equation}
and at least one of $A_1,A_2$ is in $\Psi^{\comp}_{h,\nu}(M)$,
then $A_2 B(\omega) A_1=\mathcal O(h^\infty)_{\Psi^{-\infty}}$.

2. If additionally $\psi\in\mathscr S(\mathbb R)$ and $a\in S^0_{h,\nu}(M)$ is supported in an $h$-independent
compact subset of $T^*M$, then we have the Hilbert--Schmidt norm bound
with the constants depending only on $\psi$, some $S^0_{h,\nu}$
seminorm of $a$, and a fixed compact set containing $\supp a$,
\begin{equation}
  \label{e:HS-estimate}
\big\| \Op_h(a) B(\omega) \|_{\HS}^2,\big\|B(\omega)\Op_h(a)\|_{\HS}^2\leq Ch^{1-d}
\mu_L(S^*M\cap \supp a)+\O{}(h^\infty).
\end{equation}
\end{lemma}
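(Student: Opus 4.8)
The plan is to realize $B(\omega)$ (and, for part 2, $B(\omega)B(\omega)^*$) through the semiclassical Schr\"odinger propagator of $-h^2\Delta_g$ and to exploit that, since $\widehat\psi$ (resp. the relevant Fourier transform) has compact support, only a bounded range of times occurs. Throughout put $Q:=-h^2\Delta_g$, a self-adjoint operator with $\sigma_h(Q)=|\xi|_g^2=p^2$, so that $e^{it(-h^2\Delta_g)/h}=e^{itQ/h}$ is the Schr\"odinger propagator, and recall that an Egorov theorem identical in structure to Lemma~\ref{l:egorov} applies to it, conjugation by $e^{itQ/h}$ transporting wavefront sets along the flow $e^{tH_{p^2}}$.

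\emph{Part 1.} By Fourier inversion and~\eqref{e:psi-supported},
\[
B(\omega)=\frac{1}{2\pi}\int_{T_0}^{T_1}\widehat\psi(t)\,e^{-it\omega^2/h}\,e^{itQ/h}\,dt .
\]
For $\omega$ in an $h$-sized neighbourhood of $1$ the scalar factor satisfies $|e^{-it\omega^2/h}|=e^{t\,\Im(\omega^2)/h}=\mathcal O(1)$ uniformly on $[T_0,T_1]$, so it suffices to show $A_2\,e^{itQ/h}\,A_1=\mathcal O(h^\infty)_{\Psi^{-\infty}}$ uniformly for $t\in[T_0,T_1]$. Assume $A_1\in\Psi^{\comp}_{h,\nu}(M)$ (the case $A_2\in\Psi^{\comp}_{h,\nu}(M)$ is symmetric, moving the cutoff to the other side). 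By the Egorov statement above, $e^{itQ/h}A_1=\Op_h\bigl(\sigma_h(A_1)\circ e^{-tH_{p^2}}+\mathcal O(h^{1-2\nu})_{S^0_{h,\nu}}\bigr)e^{itQ/h}+\mathcal O(h^\infty)_{\Psi^{-\infty}}$, with wavefront set in $e^{tH_{p^2}}(\WFh(A_1))$; assumption~\eqref{e:h-pseudolocality} says precisely that this set is disjoint from $\WFh(A_2)$ for $t\in[T_0,T_1]$, hence $A_2\,e^{itQ/h}\,A_1=\mathcal O(h^\infty)_{\Psi^{-\infty}}$, and integrating in $t$ finishes part 1.

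\emph{Part 2.} Write $\omega^2=\alpha+ih\beta$ with $\alpha=\Re(\omega^2)$ real, $\alpha=1+\mathcal O(h)$, $\beta=\mathcal O(1)$, and set $\psi_\beta(s):=\psi(s-i\beta)$; then $\psi_\beta\in\mathscr S(\mathbb R)$ with $\widehat{\psi_\beta}(t)=\widehat\psi(t)e^{\beta t}$ still supported in $(T_0,T_1)$. By the functional calculus
\[
B(\omega)B(\omega)^*=\Phi_\beta\!\left(\tfrac{-h^2\Delta_g-\alpha}{h}\right)=:\widetilde B,\qquad \Phi_\beta:=|\psi_\beta|^2\ \ge 0,
\]
with $\Phi_\beta\in\mathscr S(\mathbb R)$ and, by Paley--Wiener (since $\widehat{\Phi_\beta}=\tfrac1{2\pi}\widehat{\psi_\beta}*\overline{\widehat{\psi_\beta}(-\cdot)}$), $\widehat{\Phi_\beta}\in\Cc^\infty$ supported in $(T_0-T_1,T_1-T_0)$. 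Using cyclicity of the trace (all products below are trace class because $\Op_h(a)$ is compactly microlocalized, which also localizes spatially on the noncompact $M$),
\[
\|\Op_h(a)B(\omega)\|_{\HS}^2=\tr\!\bigl(\Op_h(a)\widetilde B\,\Op_h(a)^*\bigr)=\tr\!\bigl(\widetilde B\,\Op_h(c)\bigr),\quad \Op_h(c):=\Op_h(a)^*\Op_h(a)\in\Psi^{\comp}_{h,\nu}(M),
\]
where $c=|a|^2+\mathcal O(h^{1-2\nu})_{S^0_{h,\nu}}$ and $\supp c\subset\supp a$. Now Fourier invert $\widetilde B$ and note that for $t$ in the fixed compact set $\supp\widehat{\Phi_\beta}$ the physical Schr\"odinger time $th$ is $\mathcal O(h)$, so a standard short-time stationary phase/FIO computation gives, uniformly in $t$,
\[
\tr\!\bigl(\Op_h(c)\,e^{itQ/h}\bigr)=(2\pi h)^{-d}\!\int_{T^*M}\! e^{it|\xi|_g^2/h}\,c(x,\xi)\bigl(1+\mathcal O(h^{1-\nu})\bigr)dx\,d\xi+(\text{closed-geodesic terms}),
\]
the closed-geodesic terms being supported near the (energy-dependent) return times $L_\gamma/(2|\xi|_g)$ of periodic orbits of $\varphi_t$ through $\supp c$. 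Integrating $\tfrac{1}{2\pi}\int\widehat{\Phi_\beta}(t)e^{-it\alpha/h}(\cdots)\,dt$: the main term reassembles (Fourier inversion) to $(2\pi h)^{-d}\int c(x,\xi)\Phi_\beta\!\bigl(\tfrac{|\xi|_g^2-\alpha}{h}\bigr)(1+\mathcal O(h^{1-\nu}))\,dx\,d\xi$, and since $\Phi_\beta$ is Schwartz this $\xi$-integral localizes to the $h$-slab $\{|\xi|_g^2=\alpha+\mathcal O(h)\}$; the closed-geodesic terms acquire a $t$-phase that is nonstationary on $\supp\widehat{\Phi_\beta}$ and so contribute $\mathcal O(h^\infty)$. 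A coarea computation, using $\alpha=1+\mathcal O(h)$ and that $c$ has $S^0_{h,\nu}$ seminorms, then bounds the whole thing by $Ch^{1-d}\mu_L(S^*M\cap\supp a)+\mathcal O(h^\infty)$, with $C$ depending only on $\psi$, a seminorm of $a$, and a fixed compact set $\supset\supp a$, exactly as in~\eqref{e:hsnorm}. The bound for $B(\omega)\Op_h(a)$ follows by taking adjoints.

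\emph{Main obstacle.} The heart of the matter is the uniform-in-$t$ short-time trace expansion for $\tr(\Op_h(c)e^{itQ/h})$ over the fixed interval $\supp\widehat{\Phi_\beta}$, together with the verification that all periodic-orbit contributions become nonstationary — hence negligible — after the $t$-integration against $e^{-it\alpha/h}$; it is precisely this that upgrades the trivial bound $\mathcal O(h^{-d}\operatorname{Vol}(\supp a))$ to the slab bound $\mathcal O(h^{1-d}\mu_L(S^*M\cap\supp a))$, reflecting that $\widetilde B=B(\omega)B(\omega)^*$ is a spectral cutoff of width $h$. A secondary, routine-but-tedious point is the bookkeeping of trace-class-ness, cyclicity, and the $\mathcal O(h^\infty)_{\Psi^{-\infty}}$ remainders on the noncompact manifold $M$, for which the spatial localization provided by $\Op_h(a)$ is used throughout.
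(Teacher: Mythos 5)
Your Part 1 is essentially the paper's argument: the paper also writes $B(\omega)$ by Fourier inversion over $t\in(T_0,T_1)$ and invokes the microlocal propagation (Egorov/wavefront) properties of the semiclassical Schr\"odinger propagator, citing~\cite[Proposition~3.8]{DyGu}; your Egorov-based rendering of that step is correct, including moving the compact microlocalization to whichever side carries it.

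Part 2, however, has a genuine gap, and it sits exactly where you yourself placed the ``main obstacle''. First, the premise ``the physical Schr\"odinger time $th$ is $\mathcal O(h)$, so a standard short-time stationary phase applies'' misreads the operator: for fixed $t\neq 0$, $e^{itQ/h}=e^{-ith\Delta_g}$ is an $h$-Fourier integral operator quantizing the time-$t$ flow $e^{tH_{p^2}}$, not a near-identity operator, so over the fixed interval $\supp\widehat{\Phi_\beta}$ you need a genuine FIO parametrix (with caustics) and the asserted uniform trace expansion is not ``standard short-time'' at all. Second, and more seriously, the claim that the closed-orbit terms are ``nonstationary on $\supp\widehat{\Phi_\beta}$ and so contribute $\mathcal O(h^\infty)$'' is false in general: stationarity of the $t$-phase selects unit-speed geodesic loops through $\supp a$ with return time (for $H_{p^2}$, i.e.\ half their length) inside $\supp\widehat{\Phi_\beta}\subset(T_0-T_1,T_1-T_0)$, and nothing in the hypotheses rules these out~-- indeed the trapped sets this paper is designed for are made of such orbits (e.g.\ the cylindrical example of \S\ref{s:example-1} consists of a whole family of closed geodesics of one common period, a degenerate situation in which the periodic contributions are of the \emph{same} order as the main term, not even $\mathcal O(h^{(1-d)/2})$). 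Since the bound must be proportional to $\mu_L(S^*M\cap\supp a)$, which in the application shrinks with $h$, these contributions cannot simply be absorbed; they must be genuinely controlled. Two standard repairs: (i) follow the kernel computation of~\cite[Lemma~3.11]{DyGu}, which is what the paper cites; or (ii) use positivity: $\Op_h(a)^*\Op_h(a)\geq 0$ and $\Phi_\beta\geq 0$, so you may majorize $\Phi_\beta$ pointwise by a nonnegative Schwartz function $\Psi$ with $\supp\widehat\Psi\subset(-\delta,\delta)$, where $\delta$ is smaller than the minimal return time of $e^{tH_{p^2}}$ at energies near $1$ over the fixed compact set containing $\supp a$ (the constant is allowed to depend on $\psi$ and on that compact set); then $\tr\big(\Op_h(a)^*\Op_h(a)\,\Phi_\beta(\tfrac{P-\alpha}{h})\big)\leq\tr\big(\Op_h(a)^*\Op_h(a)\,\Psi(\tfrac{P-\alpha}{h})\big)$, only the $t\approx 0$ WKB term survives, and your main-term/coarea computation then does give $Ch^{1-d}\mu_L(S^*M\cap\supp a)+\mathcal O(h^\infty)$. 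The remaining bookkeeping (trace-class justification on the noncompact $M$, the $\mathcal O(h^{1-2\nu})$ terms in $c$, passing from $\psi$ to $\psi_\beta$) is fine as you set it up.
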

\begin{proof}
We write $B(\omega)$ using the Fourier inversion formula:
$$
B(\omega)={1\over 2\pi}\int_{T_0}^{T_1}\widehat\psi(t)e^{-it\omega^2/h}e^{-ith\Delta_g}\,dt.
$$
Then~\eqref{e:h-pseudolocality} follows from the wavefront set properties
of the Schr\"odinger propagator $e^{-ith\Delta_g}$ (see for instance~\cite[Proposition~3.8]{DyGu}).
The estimate~\eqref{e:HS-estimate} follows from the proof of~\cite[Lemma~3.11]{DyGu}.
\end{proof}

\section{Reduction to the trapped set}
\label{s:reduction}

In this section we review the global properties of the scattering
resolvent and the half-wave propagator and prove several
statements which reduce the analysis to a neighborhood
of the trapped set $K$.

\subsection{Scattering resolvent}
  \label{s:resolvent-infinity}

The $L^2$ resolvent
$$
R_g(\lambda)=(-\Delta_g-\lambda^2)^{-1}:L^2(M)\to L^2(M),\quad
\Im\lambda>0
$$
admits a meromorphic continuation
$$
R_g(\lambda):L^2_{\comp}(M)\to L^2_{\loc}(M),\quad
\lambda\in\mathbb C\setminus (-\infty,0].
$$
In fact, when the dimension $d$ is odd, $R_g(\lambda)$ continues
meromorphically to $\lambda\in\mathbb C$,
and when $d$ is even, $R_g(\lambda)$ continues meromorphically
to the logarithmic cover of $\mathbb C$.
One way to prove meromorphic continuation is
by constructing an approximate inverse to $-\Delta_g-\lambda^2$ modulo 
a compact remainder which uses the free resolvent in $\mathbb R^d$~--
see for instance~\cite[\S 4.2]{ZwScat} or~\cite[Theorem~1.1]{SjoDist}.
(When $M$ has several infinite ends, we need to include the free
resolvent on each of these ends.) Another way is by using the method
of complex scaling which is reviewed below.

To study resonances in the region~\eqref{e:NRB}, we put $h:=R^{-1}$ and use
the semiclassically rescaled resolvent
$$
\mathcal R_g(\omega)=h^{-2}R_g(h^{-1}\omega),\quad
\omega\in\mathbb C\setminus (-\infty, 0],
$$
which is a right inverse to the operator $-h^2\Delta_g-\omega^2$.
For $\lambda=h^{-1}\omega$, the region in~\eqref{e:NRB} corresponds to
\begin{equation}
  \label{e:h-NRB-region}
\omega\ \in\ \Omega:= [1,1+h]+i[-\beta h,0].
\end{equation}
For resonance counting, it is convenient to prove estimates in a larger region,
\begin{equation}
  \label{e:big-Omega}
\widetilde\Omega:=[1-2h,1+2h]+i[-\tilde\beta h,2h],\quad
\tilde\beta > \beta.
\end{equation}
We next review the method of complex scaling,
following~\cite[\S4.3]{DyJAMS}.
 Fix small $\theta>0$ (the angle
of scaling) and $r_1>r_0$ (the place where scaling starts). Consider the following
totally real submanifold:
$$
\Gamma_\theta:=\Big\{y+if_\theta\big(|y|\big){y\over |y|}\ \colon\
y\in\mathbb R^d\Big\}\ \subset\ \mathbb C^d
$$
where $f_\theta\in C^\infty([0,\infty))$ is chosen so that
\begin{equation}
  \label{e:f-theta}
\begin{aligned}
f_\theta(r)=0,\quad
r\leq r_1;&\quad
f_\theta(r)=r\tan \theta,\quad
r\geq 2r_1;\\
f'_\theta(r)\geq 0,\quad
r\geq 0;&\quad
\{f'_\theta(r)=0\}=\{f_\theta(r)=0\}.
\end{aligned}
\end{equation}
Define the complex scaled differential operator $P_\theta$ on $M$ as follows:
\begin{itemize}
\item on $\{r<r_1\}$, $P_\theta$ is equal to $-h^2\Delta_g$;
\item on each infinite end of $M$ with Euclidean coordinate $y$,
$P_\theta$ is the restriction to $\Gamma_\theta$ (parametrized by $y$) of
the extension, $-h^2\sum_j \partial_{z_j}^2$, to $\mathbb C^n$ of the semiclassical Euclidean Laplacian $-h^2\Delta$. In polar coordinates $y=r\varphi$,
$$
P_\theta=\bigg({1\over 1+if'_\theta(r)}hD_r\bigg)^2
-{(d-1)i\over (r+if_\theta(r))(1+if'_\theta(r))}h^2D_r
-{h^2\Delta_\varphi\over(r+if_\theta(r))^2}
$$
with $\Delta_\varphi$ denoting Laplacian on the round sphere $\mathbb R^{d-1}$.
\end{itemize}
Then $P_\theta\in\Psi^2_h(M)$ is a second order semiclassical differential operator
on $M$ with principal symbol
$$
p_\theta:=\sigma_h(P_\theta)
$$
given by $p_\theta(x,\xi)=p(x,\xi)^2$ on $\{r<r_1\}$
and on each infinite end, in the polar coordinates $y=r\varphi$,
\begin{equation}
  \label{e:p-theta-inf}
p_\theta(r,\varphi,\eta_r,\eta_\varphi)={\eta_r^2\over (1+if'_\theta(r))^2}
+{|\eta_\varphi|^2\over (r+if_\theta(r))^2}.
\end{equation}
As shown for instance in~\cite[Theorems~4.36 and~4.38]{ZwScat} (whose proofs extend directly
to the case of several Euclidean ends),
for $h$ small enough so that $\widetilde\Omega\subset \{\Im(e^{i\theta}\omega)>0\}$
and all $s\in\mathbb R$
$$
P_\theta-\omega^2\text{ is a Fredholm operator of index zero }H^{s+2}(M)\to H^s(M),\quad
\omega\in\widetilde\Omega,
$$
and the poles of $(P_\theta-\omega^2)^{-1}$ in $\widetilde\Omega$
coincide with the poles of $\mathcal R_g(\omega)$, counted with multiplicities.

The next statement uses the structure of the complex scaled operator together with
propagation of singularities to show existence of a nontrapping parametrix
(see Figure~\ref{f:trapped-parametrix}):
\begin{lemma}
  \label{l:trapped-parametrix}
Assume that $Q\in\Psi^{\comp}_h(M)$ is supported inside $\{r<r_0\}$ and its
principal symbol is independent of $h$ and satisfies
\begin{equation}
  \label{e:trapped-parametrix-q}
\begin{gathered}
\sigma_h(Q)\geq 0\quad\text{everywhere};\\
\sigma_h(Q)>0\quad\text{on }K\cap S^*M.
\end{gathered}
\end{equation}
Then for $h$ small enough and $\omega\in\widetilde\Omega$,
the operator $P_\theta-iQ-\omega^2$ is invertible $H^2(M)\to L^2(M)$.
The inverse
\begin{equation}
  \label{e:trapped-parametrix-r}
\mathcal R_Q(\omega):=(P_\theta-iQ-\omega^2)^{-1}:L^2(M)\to H^2(M)
\end{equation}
is holomorphic and satisfies for each $s$
\begin{equation}
  \label{e:trapped-parametrix-est}
\|\mathcal R_Q(\omega)\|_{H^s_h(M)\to H^{s+2}_h(M)}\leq Ch^{-1}.
\end{equation}
Moreover, the operator $\mathcal R_Q(\omega)$ is \textbf{semiclassically
outgoing} in the sense that $A_2\mathcal R_Q(\omega)A_1=\mathcal O(h^\infty)_{\Psi^{-\infty}(M)}$
for all compactly supported $A_1,A_2\in\Psi^0_h(M)$ such that
\begin{equation}
  \label{e:semiout-cond}
\WFh(A_1)\cap\WFh(A_2)=e^{tH_p}\big(\WFh(A_1)\big)\cap \WFh(A_2) \cap S^*M=\emptyset\quad\text{for all }t\geq 0.
\end{equation}
\end{lemma}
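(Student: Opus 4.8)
The plan is to carry out the standard complex-scaling-plus-complex-absorbing-potential construction (cf.~\cite[Chapter~6]{ZwScat} and~\cite[\S4]{DyJAMS}), in which the term $-iQ$ is chosen precisely so as to damp the trapped set. First I would reduce the whole statement to the single a priori bound
$$
\|u\|_{H^2_h(M)}\ \leq\ Ch^{-1}\,\|(P_\theta-iQ-\omega^2)u\|_{L^2(M)},\qquad u\in H^2(M),\quad \omega\in\widetilde\Omega,
$$
uniform for $h$ small: indeed $P_\theta-iQ-\omega^2=(P_\theta-\omega^2)-iQ$ is Fredholm of index zero $H^{s+2}(M)\to H^s(M)$ since $P_\theta-\omega^2$ is (as recalled above) and $Q\in\Psi^{\comp}_h(M)$ is compact, so the displayed estimate forces injectivity and hence invertibility, and it is precisely~\eqref{e:trapped-parametrix-est} for $s=0$. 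The case of general $s$ follows by conjugating $P_\theta-iQ-\omega^2$ with $\langle hD_x\rangle^s$, which alters neither its principal symbol $p_\theta-\omega^2$ (elliptic at fiber infinity) nor the sign of the imaginary part of its subprincipal part, so the same argument applies; holomorphy of $\mathcal R_Q(\omega)$ is then immediate from that of $\omega\mapsto P_\theta-iQ-\omega^2$.

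To prove the a priori bound, write $P:=P_\theta-iQ-\omega^2$ and $f:=Pu$, and combine elliptic, damping, and propagation estimates. \emph{Ellipticity:} the characteristic set $\{p_\theta=\Re(\omega^2)\}$ away from fiber infinity sits in a small neighborhood of the compact set $S^*M\cap\{f_\theta=0\}$ --- on $\{f_\theta=0\}$ because there $p_\theta=|\xi|_g^2$, and where $f_\theta>0$ because~\eqref{e:p-theta-inf} forces $\Im p_\theta<0$, so $P$ is elliptic there and at fiber infinity. Thus $\|u\|_{H^2_h}\lesssim\|f\|_{L^2}+\|B_1u\|_{L^2}+\mathcal O(h^\infty)\|u\|$ for a fixed $B_1\in\Psi^{\comp}_h(M)$ elliptic on that characteristic set. \emph{Damping near $K$:} pairing with $u$ gives $\Im\langle f,u\rangle=\Im\langle P_\theta u,u\rangle-\langle Qu,u\rangle-\Im(\omega^2)\|u\|_{L^2}^2$, and since $\Im p_\theta\leq 0$ everywhere (by~\eqref{e:f-theta} and~\eqref{e:p-theta-inf}) and $\Im(\omega^2)=\mathcal O(h)$ on $\widetilde\Omega$, the sharp G\aa rding inequality yields $\langle Qu,u\rangle\leq\|f\|_{L^2}\|u\|_{L^2}+Ch\|u\|_{H^2_h}^2$, and then, using $\sigma_h(Q)>0$ on $K\cap S^*M$, $\|B_Ku\|_{L^2}^2\lesssim\|f\|_{L^2}\|u\|_{H^2_h}+h\|u\|_{H^2_h}^2$ for any $B_K\in\Psi^{\comp}_h(M)$ microsupported in a small enough neighborhood of $K\cap S^*M$.

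\emph{Propagation:} since $Q\geq 0$ the operator $P$ has nonpositive imaginary subprincipal symbol, so semiclassical propagation of singularities transports control of $u$ forward along $H_p$ at a cost of at most one factor $h^{-1}$ over bounded times; using Lemma~\ref{l:trapprop}(2),(4) and continuous dependence of the flow on initial conditions, every point of $S^*M\cap\{f_\theta=0\}$ lying outside a fixed neighborhood of $K$ can be joined by a forward bicharacteristic of uniformly bounded length either to the elliptic region (when its backward trajectory escapes into an infinite end) or to the damping region near $K$ (when its backward trajectory stays bounded, hence tends to $K$ by Lemma~\ref{l:trapprop}(2)). Combining the three estimates and absorbing the terms $h\|u\|_{H^2_h}^2$ and $\|f\|_{L^2}\|u\|_{H^2_h}$ on the left for $h$ small yields $\|u\|_{H^2_h}\lesssim h^{-1}\|f\|_{L^2}$. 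The semiclassically outgoing property follows from the same analysis applied to $v:=\mathcal R_Q(\omega)A_1g$, $g\in L^2(M)$, which solves $(P_\theta-iQ-\omega^2)v=A_1g$: complex scaling prevents wavefront set from entering at infinity and $\sigma_h(Q)>0$ on $K\cap S^*M$ prevents trapped wavefront set, so $\WFh(v)\subset\WFh(A_1)\cup\bigcup_{t\geq 0}\varphi_t(\WFh(A_1)\cap S^*M)$, which is disjoint from $\WFh(A_2)$ under~\eqref{e:semiout-cond}, giving $A_2\mathcal R_Q(\omega)A_1=\mathcal O(h^\infty)_{\Psi^{-\infty}(M)}$.

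I expect the main obstacle to be the propagation step: orienting the propagation-of-singularities estimates in the direction forced by the sign of the absorption, and checking that the off-$K$ points of $S^*M$ reach the controlled regions in \emph{uniformly} bounded time --- which is exactly where the uniform convergence in Lemma~\ref{l:trapprop}(2) and the escape statement of Lemma~\ref{l:trapprop}(4) are essential. A secondary delicate point is the bookkeeping that closes the loop at scale $h^{-1}$: the damping estimate controls $\|B_Ku\|_{L^2}^2$ only by $\|f\|_{L^2}\|u\|_{H^2_h}$, so one must feed it into the elliptic and propagation estimates and use Cauchy--Schwarz with a well-chosen weight to avoid losing a worse power of $h$.
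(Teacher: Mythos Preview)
Your proposal is correct and follows the same overall architecture as the paper's proof: reduce to an a priori bound via the Fredholm property, establish ellipticity at fiber infinity and in the complex-scaled region, and close using propagation of singularities in the direction dictated by $\Im\sigma_h(\mathbf P)\leq 0$. The semiclassically outgoing statement is handled the same way in both arguments.

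The one genuine difference is how you control $u$ near $K$. You treat $-iQ$ as a subprincipal damping term and extract $\|B_Ku\|^2\lesssim\|f\|\,\|u\|+h\|u\|^2$ from the pairing $\Im\langle \mathbf Pu,u\rangle$ together with sharp G\aa rding. The paper instead observes that since $Q\in\Psi^{\comp}_h(M)$ (not $h\Psi^{\comp}_h$), the term $-i\sigma_h(Q)$ enters the \emph{principal} symbol $\sigma_h(\mathbf P)=p_\theta-i\sigma_h(Q)-1$; on $\{\sigma_h(Q)>0\}$ this symbol has strictly negative imaginary part, so $\mathbf P$ is simply elliptic there. The characteristic set is then exactly $S^*M\cap\{f_\theta=0\}\cap\{\sigma_h(Q)=0\}$, and the elliptic estimate already gives control near $K$ with no loss of $h$. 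This removes precisely the ``bookkeeping that closes the loop at scale $h^{-1}$'' that you flag as delicate: there is no cross term $\|f\|^{1/2}\|u\|^{1/2}$ to absorb, and the single $h^{-1}$ comes purely from propagation. Your route closes (the cross term can be absorbed by Young's inequality, as you anticipate), but the paper's is shorter and avoids the G\aa rding step entirely.
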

\begin{figure}
\includegraphics{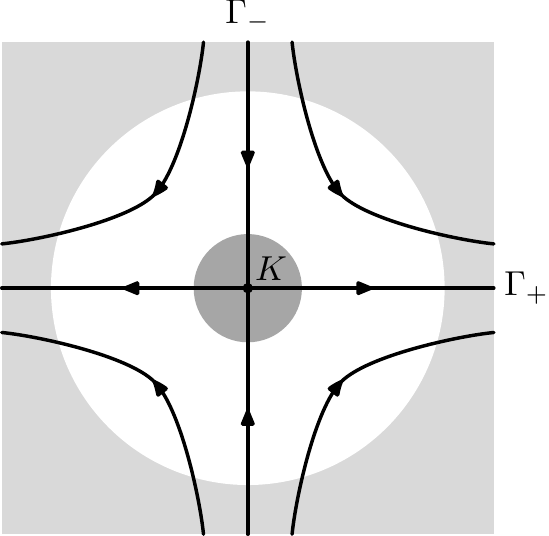}
\caption{An illustration of Lemma~\ref{l:trapped-parametrix}, showing trajectories
of $\varphi_t$ on $S^*M$. The shaded regions show places where $P_\theta-iQ-\omega^2$
is elliptic: the darker shaded region is $\{\sigma_h(Q)>0\}$
and the lighter shaded region is $\{f_\theta(r)\neq 0\}$.}
\label{f:trapped-parametrix}
\end{figure}
\begin{proof}
We follow~\cite[\S4.3]{DyJAMS}, see also~\cite[\S6.2.1]{ZwScat}.
We use semiclassical elliptic and propagation estimates 
for solutions to the equation
$$
\mathbf Pu=f\in H^s(M),\quad
u\in H^{s+2}(M)
$$
where
$$
\mathbf P:=P_\theta-iQ-\omega^2\in\Psi^2_h(M),\quad
\sigma_h(\mathbf P)=p_\theta-i\sigma_h(Q)-1.
$$
The operator $\mathbf P$ is elliptic for $r\geq 2r_1$, since
$$
\sigma_h(\mathbf P)(y,\eta)={|\eta|^2\over (1+i\tan\theta)^2}-1\quad\text{for }|y|\geq 2r_1.
$$
Moreover, $\mathbf P$ is elliptic near the fiber infinity of $M$, that is for
large enough $|\xi|$. By the elliptic estimate in the class $\Psi^2_h(M)$
(see for instance~\cite[Theorem~4.29]{EZB},
\cite[Proposition~2.4]{DyZeta}, or~\cite[\S E.2.2]{ZwScat})
there exists $\chi\in \Cc(M)$ such that for all $N$,
\begin{equation}
  \label{e:infes-ell}
\|(1-\chi)u\|_{H^{s+2}_h(M)}\leq C\|f\|_{H^s_h(M)}+\mathcal O(h^\infty)\|u\|_{H^{-N}_h(M)}.
\end{equation}
It remains to estimate $u$ in a compact set.
By~\eqref{e:f-theta} and~\eqref{e:p-theta-inf}
the operator $\mathbf P$ is elliptic
outside the set
$S^*M\cap \{f_\theta(r)=0\}\cap \{\sigma_h(Q)=0\}$. By the elliptic estimate,
we have for all $N$
\begin{equation}
  \label{e:infes-ell2}
\begin{gathered}
\|Bu\|_{H^{s+2}_h(M)}\leq C\|B'f\|_{H^s_h(M)}+\mathcal O(h^\infty)\|u\|_{H^{-N}_h(M)}\\
\text{for all compactly supported }B,B'\in\Psi^{0}_h(M)\text{ such that}\\
\WFh(B)\cap S^*M\cap \{f_\theta(r)=0\}\cap \{\sigma_h(Q)=0\}=\emptyset,\quad
\WFh(B)\subset\Ell_h(B').
\end{gathered}
\end{equation}
To estimate $\|Au\|$ for general $A$, we use the following statement:
for each $(x,\xi)\in T^*M$, there exists $T_{(x,\xi)}\geq 0$ such that
\begin{equation}
  \label{e:T-x-xi}
\exp(-T_{(x,\xi)}H_{\Re \sigma_h(\mathbf P)})(x,\xi)\ \notin\ S^*M\cap \{f_\theta(r)=0\}\cap \{\sigma_h(Q)=0\}. 
\end{equation}
Indeed, assume the contrary, and put $\gamma(t)=\exp(tH_{\Re\sigma_h(\mathbf P)})(x,\xi)$.
Clearly $(x,\xi)\in S^*M$. For all $t\leq 0$, we have $\gamma(t)\in \{f_\theta(r)=0\}$ and thus
(using that $f_\theta'(r)=f_\theta''(r)=0$ on $\{f_\theta(r)=0\}$)
$$
\gamma(t)=\exp(tH_{p^2})(x,\xi)
=\varphi_{2t}(x,\xi).
$$
Now, if $(x,\xi)\in\Gamma_+$, then
$\varphi_{-T}(x,\xi)\in \{\sigma_h(Q)>0\}$ for some $T>0$,
by~\eqref{e:trapprop-1} and~\eqref{e:trapped-parametrix-q}. If $(x,\xi)\notin\Gamma_+$,
then $\varphi_{-T}(x,\xi)\in \{r\geq 2r_1\}\subset \{f_\theta(r)\neq 0\}$
for some $T>0$. In either case we reach a contradiction,
finishing the proof of~\eqref{e:T-x-xi}.

By~\eqref{e:p-theta-inf} and~\eqref{e:trapped-parametrix-q},
\begin{equation}
  \label{e:imagination}
\Im\sigma_h(\mathbf P)\leq 0\quad\text{everywhere}.
\end{equation}
Using semiclassical propagation of singularities (see for instance~\cite[Theorem~E.49]{ZwScat}
or~\cite[Proposition~2.5]{DyZeta}) and~\eqref{e:infes-ell2}, we deduce that
\begin{equation}
  \label{e:infes-prop}
\begin{gathered}
\|Au\|_{H^{s+2}_h(M)}\leq Ch^{-1}\|A'f\|_{H^s_h(M)}+\mathcal O(h^\infty)\|u\|_{H^{-N}_h(M)}\\
\text{for all compactly supported }A,A'\in\Psi^{0}_h(M)\text{ such that }\WFh(A)\subset \Ell_h(A')\text{ and}\\
\varphi_{-2t}(x,\xi)\in\Ell_h(A')\text{ for all }(x,\xi)\in S^*M\cap \WFh(A),\
t\in [0,T_{(x,\xi)}].
\end{gathered}
\end{equation}
Indeed, by a pseudodifferential partition of unity we may reduce to the case
when $\WFh(A)$ is contained in a small neighborhood of some $(x,\xi)\in \overline T^*M$.
If $(x,\xi)\notin S^*M$, then we use~\eqref{e:infes-ell2}.
Otherwise we use propagation of singularities and~\eqref{e:T-x-xi}, \eqref{e:imagination},
and bound the term on the right-hand side of the propagation estimate by~\eqref{e:infes-ell2}.

Together~\eqref{e:infes-ell} and~\eqref{e:infes-prop} imply that
\begin{equation}
  \label{e:infes-all}
\|u\|_{H^{s+2}_h(M)}\leq Ch^{-1}\|\mathbf Pu\|_{H^s_h(M)}+\mathcal O(h^\infty)\|u\|_{H^{s+2}_h(M)}\quad\text{for all }
u\in H^{s+2}(M).
\end{equation}
As a compact perturbation of $P_\theta-\omega^2$,
$\mathbf P$ is a Fredholm operator $H^{s+2}(M)\to H^s(M)$, therefore~\eqref{e:infes-all}
implies that for $h$ small enough, $\mathbf P:H^{s+2}(M)\to H^s(M)$ is invertible and
\eqref{e:trapped-parametrix-est} holds. The restriction of the inverse to $C_c^\infty(M)$
does not depend on $s$.

It remains to show that under the condition~\eqref{e:semiout-cond},
we have $A_2\mathcal R_Q(\omega)A_1=\mathcal O(h^\infty)_{\Psi^{-\infty}(M)}$.
If $\WFh(A_1)\cap S^*M=\emptyset$ or $\WFh(A_2)\cap S^*M=\emptyset$, this follows from the elliptic estimate;
thus we may assume that $A_1,A_2\in\Psi^{\comp}_h(M)$.
Take $\tilde f\in H^{-N}(M)$ and put
$$
f:=A_1\tilde f,\quad
u:=\mathbf P^{-1}f.
$$
By~\eqref{e:semiout-cond}, we may find $A'\in\Psi^0_h(M)$ such that
$\WFh(A_1)\cap \WFh(A')=\emptyset$ and
\eqref{e:infes-prop} holds for $A:=A_2$ and~$A'$.
Then
$$
\|A_2u\|_{H^{s+2}_h(M)}\leq Ch^{-1}\|A'A_1\tilde f\|_{H^s_h(M)}+\mathcal O(h^\infty)\|u\|_{H^{-N}_h(M)}
=\mathcal O(h^\infty)\|\tilde f\|_{H^{-N}_h(M)},
$$
finishing the proof.
\end{proof}
We now prove two corollaries of Lemma~\ref{l:trapped-parametrix},
which in particular imply estimates on solutions to
\begin{equation}
  \label{e:equation-cs}
(P_\theta-\omega^2)u=f,\quad
u,f\in L^2(M),\quad
\omega\in\widetilde\Omega.
\end{equation}
The first statement implies that
$$
\|A_1u\|_{H^{s+2}_h(M)}\leq Ch^{-1}\|f\|_{H^s_h(M)}+\mathcal O(h^\infty)\|u\|_{H^{-N}_h(M)}\quad\text{when }
\WFh(A_1)\cap\Gamma^+\cap S^*M=\emptyset.
$$
\begin{lemma}
  \label{l:infinity-A1}
Assume that $A_1\in\Psi^0_{h,\nu}(M)$ is compactly supported and
$\WFh(A_1)\cap \Gamma^+\cap S^*M=\emptyset$. Then
there exists a neighborhood $U$ of $K\cap S^*M$
such that for all $Q$ satisfying~\eqref{e:trapped-parametrix-q} and $\WFh(Q)\subset U$,
we have
\begin{equation}
  \label{e:infinity-A1}
A_1\big(I-\mathcal R_Q(\omega)(P_\theta-\omega^2)\big)=\mathcal O(h^\infty)_{\Psi^{-\infty}},\quad
\omega\in\widetilde\Omega.
\end{equation}
\end{lemma}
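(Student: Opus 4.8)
The strategy is to write $I - \mathcal R_Q(\omega)(P_\theta-\omega^2)$ as an operator which, when sandwiched on the left by $A_1$, is negligible because $\WFh(A_1)$ does not meet the forward-trapped set $\Gamma^+$. The key mechanism is that $P_\theta - iQ - \omega^2 = (P_\theta-\omega^2) - iQ$, so formally
$$
I - \mathcal R_Q(\omega)(P_\theta-\omega^2) = I - \mathcal R_Q(\omega)\big((P_\theta - iQ - \omega^2) + iQ\big) = -i\,\mathcal R_Q(\omega) Q.
$$
Thus it suffices to show $A_1 \mathcal R_Q(\omega) Q = \mathcal O(h^\infty)_{\Psi^{-\infty}}$. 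Since $Q$ is compactly microlocalized with $\WFh(Q)$ contained in a small neighborhood $U$ of $K\cap S^*M$, and the semiclassically outgoing property of $\mathcal R_Q(\omega)$ from Lemma~\ref{l:trapped-parametrix} handles exactly pairs of operators $A_1, A_2$ satisfying the forward-non-intersection condition~\eqref{e:semiout-cond}, I would try to apply that property with $A_2 := A_1$ and with $Q$ playing the role of $A_1$ in~\eqref{e:semiout-cond}.

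The main point to verify is therefore the dynamical condition: for $U$ a sufficiently small neighborhood of $K\cap S^*M$,
$$
\WFh(A_1)\cap \overline{U} = \varnothing \quad\text{and}\quad
e^{tH_p}\big(\overline{U}\cap S^*M\big)\cap \WFh(A_1)\cap S^*M = \varnothing \quad\text{for all } t\geq 0.
$$
The first is automatic once $U$ is small, since $\WFh(A_1)\cap K\cap S^*M \subset \WFh(A_1)\cap\Gamma^+\cap S^*M = \varnothing$ and $\WFh(A_1)$ is closed. For the second, note that $\varphi_t(K) = K$, so the forward flowout of a point near $K$ stays near $K$ only so long as... actually the right statement to extract from Lemma~\ref{l:trapprop} is: a point $\mathbf x$ with $\varphi_t(\mathbf x)$ bounded for all $t\ge 0$ lies in $\Gamma_+$, so its forward orbit approaches $K$ (by~\eqref{e:trapprop-1}); and a point whose forward orbit is \emph{not} bounded escapes to infinity inside an end, hence leaves any fixed compact set and in particular never returns to $\WFh(A_1)$ once $r$ is large enough. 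Combining these with compactness, I would argue by contradiction: if no such $U$ works, take $U_k$ shrinking to $K\cap S^*M$ and points $\mathbf x_k\in U_k$, $t_k\ge 0$ with $\varphi_{t_k}(\mathbf x_k)\in \WFh(A_1)\cap S^*M$; pass to a limit $\mathbf x_\infty\in K\cap S^*M$, so $\mathbf x_\infty\in\Gamma_+\cap\Gamma_-$. If the $t_k$ stay bounded, the limit of $\varphi_{t_k}(\mathbf x_k)$ lies in $\varphi_{t_\infty}(K)\cap\WFh(A_1) \subset \Gamma^+\cap\WFh(A_1)\cap S^*M=\varnothing$, contradiction. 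If $t_k\to\infty$, then since $\mathbf x_\infty\in\Gamma_+$ (i.e. backward orbit bounded... here I need the forward version) the forward orbit of $\mathbf x_\infty$ stays near $K$; a short additional softening argument using continuous dependence on initial data shows $\varphi_{t_k}(\mathbf x_k)$ is eventually within $\varepsilon$ of $K$, again missing $\WFh(A_1)$ for $\varepsilon$ small. I would make this precise using part~3 or part~2 of Lemma~\ref{l:trapprop}, whichever gives uniform control of $d(\varphi_t(\mathbf x), K)$ on the relevant set.

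Once the dynamical condition is established, Lemma~\ref{l:trapped-parametrix} gives $A_1 \mathcal R_Q(\omega) Q = \mathcal O(h^\infty)_{\Psi^{-\infty}}$ directly (one may need to factor $Q = Q_2 Q$ with $Q_2\in\Psi^{\comp}_h$ elliptic on $\WFh(Q)$ so that $Q_2$ serves as the $A_1$ in~\eqref{e:semiout-cond} — this is a routine microlocalization). The identity $I - \mathcal R_Q(\omega)(P_\theta-\omega^2) = -i\mathcal R_Q(\omega)Q$ then finishes the proof, with the bound~\eqref{e:infinity-A1} holding uniformly for $\omega\in\widetilde\Omega$ since $\mathcal R_Q(\omega)$ is holomorphic there with the uniform estimate~\eqref{e:trapped-parametrix-est}. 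The one place requiring care — and the main obstacle — is the dynamical statement: making sure the neighborhood $U$ can be chosen independently of $\omega$ and that the "escape to infinity" alternative genuinely prevents return to $\WFh(A_1)\cap S^*M$, which relies essentially on the convexity property~\eqref{e:convexinf-1} of the ends and the fact that $\WFh(A_1)$ is a fixed compact set.
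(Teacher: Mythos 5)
Your overall route coincides with the paper's: the identity $I-\mathcal R_Q(\omega)(P_\theta-\omega^2)=-i\,\mathcal R_Q(\omega)Q$, followed by the semiclassically outgoing property of Lemma~\ref{l:trapped-parametrix} with $Q$ as the right factor and $A_1$ as the left factor, once $U$ is chosen so that the dynamical condition~\eqref{e:semiout-cond} holds. The gap is in your justification of that dynamical condition in the unbounded-time case. In your contradiction argument with $t_k\to\infty$ you pass to a limit $\mathbf x_\infty\in K\cap S^*M$ and then claim that $\varphi_{t_k}(\mathbf x_k)$ remains close to $K$ by ``continuous dependence on initial data''; this fails because continuous dependence is uniform only on bounded time intervals, and the points $\mathbf x_k$ need not lie in $\Gamma_+$, so neither part~2 of Lemma~\ref{l:trapprop} (which concerns only points of $\Gamma_\pm$) nor part~3 (which requires the point to lie in the compact set at two large times of opposite sign) supplies the uniform control of $d(\varphi_t(\mathbf x_k),K)$ that you invoke. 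The tool you actually need is part~4 of Lemma~\ref{l:trapprop}, applied to the compact set $V:=\WFh(A_1)\cap S^*M$, which is disjoint from $\Gamma_+$ by hypothesis: it gives that $\bigcup_{t\geq 0}\varphi_{-t}(V)$ is closed and escapes uniformly, $r\geq\sqrt{r_0^2+(t-T)^2}$ on $\varphi_{-t}(V)$ for $t\geq T$. Since this closed set is disjoint from the compact invariant set $K\cap S^*M$, one can choose $U$ avoiding it together with $\WFh(A_1)$, which is exactly the paper's one-line choice of $U$; equivalently, in your $t_k\to\infty$ case the uniform escape gives $r(\mathbf x_k)=r(\varphi_{-t_k}(\mathbf y_k))\to\infty$, contradicting $\mathbf x_k\to K\subset\{r<r_0\}$. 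So the claim you need is true, but your proposed mechanism for it (parts~2/3 plus continuity in initial data) does not prove it.

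A second, minor point: $A_1$ is allowed to be in the exotic class $\Psi^0_{h,\nu}$, while the outgoing property in Lemma~\ref{l:trapped-parametrix} is stated for compactly supported operators in $\Psi^0_h$. Your microlocal factorization is placed on the $Q$ side, where it is not needed ($Q\in\Psi^{\comp}_h$ already qualifies); the insertion must be made on the $A_1$ side: choose a compactly supported $B\in\Psi^0_h$ with $A_1(I-B)=\mathcal O(h^\infty)_{\Psi^{-\infty}}$ and $\WFh(B)$ still disjoint from $\bigcup_{t\geq 0}\varphi_t(\WFh(Q))$ in the sense of~\eqref{e:semiout-cond}, and apply the outgoing property to $B\mathcal R_Q(\omega)Q$. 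This is routine (it is the paper's parenthetical remark), but as written your application of Lemma~\ref{l:trapped-parametrix} with $A_2:=A_1$ is outside the scope of its statement.
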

\begin{proof}
Choose $U$ such that
$$
U\cap \WFh(A_1)=
U\cap \bigcup_{t\geq 0}\varphi_{-t}\big(\WFh(A_1)\cap S^*M\big)=\emptyset.
$$
This is possible by part~4 of Lemma~\ref{l:trapprop}.
Now
$$
A_1\big(I-\mathcal R_Q(\omega)(P_\theta-\omega^2)\big)=-iA_1\mathcal R_Q(\omega)Q
=
\mathcal O(h^\infty)_{\Psi^{-\infty}}
$$
by the semiclassically outgoing property in Lemma~\ref{l:trapped-parametrix}
(inserting an operator in $\Psi_h^0(M)$ between
$A_1$ and $\mathcal R_Q(\omega)$).
\end{proof}
The second corollary of Lemma~\ref{l:trapped-parametrix} implies the following bound for solutions of~\eqref{e:equation-cs}:
$$
\|u\|_{H^{s+2}_h}\leq C\|Bu\|_{H^s_h}+Ch^{-1}\|f\|_{H^s_h}+\mathcal O(h^\infty)\|u\|_{H^{-N}_h}\quad\text{when }
K\cap S^*M\subset\Ell_h(B).
$$
\begin{lemma}
  \label{l:infinity-B}
Assume that $B\in\Psi^0_h(M)$ is compactly supported and elliptic
on $K\cap S^*M$. Then for all $Q$
satisfying~\eqref{e:trapped-parametrix-q} and $\WFh(Q)\subset \Ell_h(B)$,
there exist $B_0,B_1,B_2\in\Psi^{\comp}_h(M)$ such that
\begin{equation}
  \label{e:infinity-B}
I=(B_1+h\mathcal R_Q(\omega)B_2)B+\mathcal R_Q(\omega)(I-B_0)(P_\theta-\omega^2)
+\mathcal O(h^\infty)_{\Psi^{-\infty}},\quad
\omega\in\widetilde\Omega.
\end{equation}
\end{lemma}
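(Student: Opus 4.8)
The plan is to use that, by Lemma~\ref{l:trapped-parametrix}, $\mathcal R_Q(\omega)$ is a two-sided inverse of
$$\mathbf P:=P_\theta-iQ-\omega^2\in\Psi^2_h(M),\qquad \mathbf P=(P_\theta-\omega^2)-iQ,$$
with $\|\mathcal R_Q(\omega)\|_{H^s_h(M)\to H^{s+2}_h(M)}=\mathcal O(h^{-1})$; in particular $\mathcal R_Q(\omega)$ composed on either side with a remainder in $h^\infty\Psi^{-\infty}(M)$ is again in $h^\infty\Psi^{-\infty}(M)$. The starting identity is then
$$I=\mathcal R_Q(\omega)\mathbf P=\mathcal R_Q(\omega)(P_\theta-\omega^2)-i\mathcal R_Q(\omega)Q.$$

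Next I would fix the cutoff $B_0$. Since $\WFh(Q)$ is a compact subset of the open set $\Ell_h(B)$, choose $B_0=\Op_h(b_0)\in\Psi^{\comp}_h(M)$ with $b_0\in\Cc(T^*M)$, $b_0\equiv1$ near $\WFh(Q)$, and $\supp b_0\subset\Ell_h(B)$; then $B_0Q=Q+\mathcal O(h^\infty)_{\Psi^{-\infty}}$ and $\WFh(B_0)\subset\Ell_h(B)$. Splitting
$$\mathcal R_Q(\omega)(P_\theta-\omega^2)=\mathcal R_Q(\omega)(I-B_0)(P_\theta-\omega^2)+\mathcal R_Q(\omega)B_0(P_\theta-\omega^2),$$
the first summand is already one of the terms in~\eqref{e:infinity-B}, so it remains to put $\mathcal R_Q(\omega)B_0(P_\theta-\omega^2)-i\mathcal R_Q(\omega)Q$ in the form $(B_1+h\mathcal R_Q(\omega)B_2)B+\mathcal O(h^\infty)_{\Psi^{-\infty}}$.

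The key step is to commute $B_0$ past $\mathbf P$. Writing $B_0(P_\theta-\omega^2)=B_0\mathbf P+iB_0Q=\mathbf P B_0+[B_0,P_\theta]-i[B_0,Q]+iB_0Q$, and using $\mathcal R_Q(\omega)\mathbf P=I$ together with $B_0Q=Q+\mathcal O(h^\infty)_{\Psi^{-\infty}}$, gives
$$\mathcal R_Q(\omega)B_0(P_\theta-\omega^2)=B_0+h\,\mathcal R_Q(\omega)\big(\widetilde B-i\widehat B\big)+i\mathcal R_Q(\omega)Q+\mathcal O(h^\infty)_{\Psi^{-\infty}},$$
where $\widetilde B:=h^{-1}[B_0,P_\theta]$ and $\widehat B:=h^{-1}[B_0,Q]$ belong to $\Psi^{\comp}_h(M)$ — here the gain of one power of $h$ is exactly the commutator gain of the calculus, and $P_\theta$ being a differential operator makes these symbol expansions finite — with wavefront sets contained in $\supp b_0\subset\Ell_h(B)$. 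The two copies of $i\mathcal R_Q(\omega)Q$ cancel, leaving
$$I=\mathcal R_Q(\omega)(I-B_0)(P_\theta-\omega^2)+B_0+h\,\mathcal R_Q(\omega)\big(\widetilde B-i\widehat B\big)+\mathcal O(h^\infty)_{\Psi^{-\infty}}.$$

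Finally, since $\WFh(B_0),\WFh(\widetilde B),\WFh(\widehat B)\subset\Ell_h(B)$, I would invoke the elliptic parametrix of $B$: there is $B'\in\Psi^0_h(M)$ with $B'B=I+\mathcal O(h^\infty)_{\Psi^{-\infty}}$ microlocally on a neighborhood of those wavefront sets, whence $B_0=(B_0B')B+\mathcal O(h^\infty)_{\Psi^{-\infty}}$ and $\widetilde B-i\widehat B=\big((\widetilde B-i\widehat B)B'\big)B+\mathcal O(h^\infty)_{\Psi^{-\infty}}$. Setting $B_1:=B_0B'$ and $B_2:=(\widetilde B-i\widehat B)B'$, both in $\Psi^{\comp}_h(M)$ and both independent of $\omega$, and absorbing $\mathcal R_Q(\omega)\cdot\mathcal O(h^\infty)_{\Psi^{-\infty}}=\mathcal O(h^\infty)_{\Psi^{-\infty}}$, yields~\eqref{e:infinity-B}. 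The argument is essentially algebraic once one recognizes that the $h\mathcal R_Q(\omega)B_2$ term has to come from the commutators $[B_0,P_\theta]$ and $[B_0,Q]$; the only point needing care is the bookkeeping of wavefront sets, to ensure $B_0,\widetilde B,\widehat B$ (hence $B_1,B_2$) are compactly microlocalized inside $\Ell_h(B)$ so that the parametrix $B'$ exists and all remainders are genuinely $\mathcal O(h^\infty)_{\Psi^{-\infty}}$. (The stated hypothesis that $B$ is elliptic on $K\cap S^*M$ is subsumed: \eqref{e:trapped-parametrix-q} forces $K\cap S^*M\subset\WFh(Q)\subset\Ell_h(B)$.)
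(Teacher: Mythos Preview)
Your proof is correct and follows essentially the same route as the paper's: choose $B_0$ microlocally equal to $I$ near $\WFh(Q)$ and with $\WFh(B_0)\subset\Ell_h(B)$, use $\mathcal R_Q(\omega)\mathbf P=I$, and commute $B_0$ past $P_\theta-\omega^2$ so that the resulting commutator term becomes the $h\mathcal R_Q(\omega)B_2$ piece after applying an elliptic parametrix for $B$. The only cosmetic difference is that the paper starts from $I-B_0=\mathcal R_Q(\omega)\mathbf P(I-B_0)$ (so the $Q$ term drops out immediately via $Q(I-B_0)=\mathcal O(h^\infty)_{\Psi^{-\infty}}$), whereas you start from $I=\mathcal R_Q(\omega)\mathbf P$ and carry along the commutator $[B_0,Q]$; since $b_0\equiv 1$ near $\WFh(Q)$ this commutator is in fact $\mathcal O(h^\infty)_{\Psi^{-\infty}}$, so your $\widehat B$ could have been absorbed into the remainder, but keeping it does no harm.
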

\begin{proof}
Take $B_0$ such that
$$
\WFh(Q)\cap \WFh(I-B_0)=\emptyset,\quad
\WFh(B_0)\subset\Ell_h(B).
$$
Then
$$
I-B_0=\mathcal R_Q(\omega)(P_\theta-\omega^2-iQ)(I-B_0)
$$
implies that
$$
I=B_0+\mathcal R_Q(\omega)(I-B_0)(P_\theta-\omega^2)-\mathcal R_Q(\omega)[P_\theta,B_0]
+\mathcal O(h^\infty)_{\Psi^{-\infty}}.
$$
It remains to use the elliptic parametrix construction to find $B_1$, $B_2$ so that 
$$
B_2B=-h^{-1}[P_\theta,B_0]+\O{}(h^\infty)_{\Psi^{-\infty}},\quad
 B_1B=B_0+\O{}(h^\infty)_{\Psi^{-\infty}}
$$
and~\eqref{e:infinity-B} follows.
\end{proof}
The next statement, which is an important technical tool in the
construction of the approximate inverse in~\S\ref{s:weyl-inverse},
is obtained by iteration of Lemmas~\ref{l:basic-parametrix}
and~\ref{l:infinity-A1}. See Figure~\ref{f:iterated-parametrix}.
\begin{lemma}
  \label{l:iterated-parametrix}
Fix $\nu\in [0,1/2)$ and assume that a sequence of symbols
$$
a_j\in S^{0}_{h,\nu}(T^*M),\quad
j=0,1,\dots,L=L(h),\quad
0<L(h)\leq C\log(1/h)
$$ 
is supported in a fixed compact subset $W\subset T^*M\setminus 0$ and each $S^0_{h,\nu}$
seminorm of $a_j$ is bounded uniformly in~$j$. Assume moreover
that $|a_j|\leq 1$ and 
there exists an $h$-independent open neighborhood $V$ of $\Gamma_+\cap S^*M$
and there exists $t_1>0$ bounded independently of $h$
such that the following dynamical conditions hold for all $j$:
\begin{gather}
  \label{e:dyncon}
\varphi_{-t_1}(\supp a_j)\cap \supp(1-a_{j+1})\cap V=\emptyset\quad\text{for all }
j=0,\dots,L-1,\\
  \label{e:dyncon-2}
\varphi_{-t}(W)\subset \{r<r_1\}\quad\text{for all }t\in [0,t_1].
\end{gather}
Then we have for all $\omega\in\widetilde\Omega$, on $H^2(M)$
\begin{equation}
  \label{e:iterated-parametrix}
\Op_h(a_0)=Z(\omega)(P_\theta-\omega^2)
+J(\omega)\Op_h(a_L)+\mathcal O(h^\infty)_{\Psi^{-\infty}}
\end{equation}
where $Z(\omega):L^2(M)\to H^2(M)$, $J(\omega):H^{-N}(M)\to H^{N}(M)$
are holomorphic in $\omega\in\widetilde\Omega$ and satisfy the bounds
for each $\varepsilon_1>0$
\begin{align}
  \label{e:iter-bounds-Z}
\|Z(\omega)\|_{H^s_h\to H^{s+2}_h}&\leq C_{s,\varepsilon_1}h^{-1}\exp\big((\tilde\beta t_1+\varepsilon_1) L\big),\\
  \label{e:iter-bounds-J}
\|J(\omega)\|_{H^{-N}_h\to H^{N}_h}&\leq C_{N,\varepsilon_1}\exp\Big(\Big(-{\Im\omega\over h}t_1+\varepsilon_1\Big)L\Big).
\end{align}
Finally, if $a_0=1$ on some $h$-independent neighborhood of $K\cap S^*M$, then
a decomposition of the form~\eqref{e:iterated-parametrix}
holds with $\Op_h(a_0)$ replaced by the identity operator.
\end{lemma}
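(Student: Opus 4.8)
The plan is to prove~\eqref{e:iterated-parametrix} by iterating Lemmas~\ref{l:basic-parametrix} and~\ref{l:infinity-A1} exactly $L$ times, peeling off one factor $\Op_h(a_j)$ at each step. First fix, once and for all and independently of $j$, the following auxiliary data. Pick a cutoff $\chi\in\Cc(T^*M\setminus 0)$, supported in a thin conic slab $\{|\xi|_g\in[1-\delta,1+\delta]\}$ around $S^*M$, equal to $1$ on a neighborhood of $\Gamma_+\cap S^*M$, and satisfying $\varphi_{-t_1}(\supp\chi)\subset V$; this is possible since $\Gamma_+\cap S^*M$ is $\varphi_{-t_1}$-invariant and lies in the open set $V$. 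Set $W':=W\cap\supp(1-\chi)\cap\{|\xi|_g\in[1-\delta,1+\delta]\}$, a fixed compact set which, by conicity of $\Gamma_+$, is disjoint from $\Gamma_+$; by part~4 of Lemma~\ref{l:trapprop}, $W'\cup\bigcup_{t\geq 0}\varphi_{-t}(W'\cap S^*M)$ is closed and disjoint from $K\cap S^*M$, so there is a neighborhood $U\supset K\cap S^*M$ avoiding it. Choose $Q\in\Psi^{\comp}_h(M)$ supported in $\{r<r_0\}$ with $\WFh(Q)\subset U$ and $\sigma_h(Q)$ as in~\eqref{e:trapped-parametrix-q}; by Lemma~\ref{l:trapped-parametrix}, $\mathcal R_Q(\omega)$ is then holomorphic on $\widetilde\Omega$ with $\|\mathcal R_Q(\omega)\|_{H^s_h(M)\to H^{s+2}_h(M)}\leq Ch^{-1}$, and by construction Lemma~\ref{l:infinity-A1} applies, with this $Q$, to any compactly supported operator whose wavefront set lies in $W'$. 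Finally, using~\eqref{e:dyncon-2}, fix $B'\in\Psi^{\comp}_h(M)$ elliptic on a neighborhood of $\bigcup_{0\leq t\leq t_1}\varphi_{-t}(W)\subset\{r<r_1\}$; since $P_\theta$ and $-h^2\Delta_g$ agree as differential operators on $\{r<r_1\}$ and $B'$ is microlocalized there, $B'(-h^2\Delta_g-\omega^2)=B'(P_\theta-\omega^2)+\mathcal O(h^\infty)_{\Psi^{-\infty}}$.

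I would then prove by induction on $j\in\{0,\dots,L\}$ that, for all $\omega\in\widetilde\Omega$,
$$
\Op_h(a_0)=Z_j(\omega)(P_\theta-\omega^2)+D_j(\omega)\Op_h(a_j)+\mathcal O(h^\infty)_{\Psi^{-\infty}},\qquad D_0=I,\ Z_0=0,
$$
with $D_j,Z_j$ holomorphic in $\omega$. In the step, write $\Op_h(a_j)=\Op_h(a_j\chi)+\Op_h(a_j(1-\chi))+\mathcal O(h^\infty)_{\Psi^{-\infty}}$. The part of $\Op_h(a_j(1-\chi))$ microlocalized where $P_\theta-\omega^2$ is elliptic (i.e.\ $|\xi|_g$ away from $1$) is absorbed into $Z$ by an elliptic parametrix; the remaining part has wavefront set in $W'$, so Lemma~\ref{l:infinity-A1} gives $\Op_h(a_j(1-\chi))=\Op_h(a_j(1-\chi))\mathcal R_Q(\omega)(P_\theta-\omega^2)+\mathcal O(h^\infty)$, again absorbed into $Z$. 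For $\Op_h(a_j\chi)$, the inclusion $\varphi_{-t_1}(\supp\chi)\subset V$ together with~\eqref{e:dyncon} gives $\varphi_{-t_1}(\supp(a_j\chi))\cap\supp(1-a_{j+1})=\emptyset$, while $\WFh(I-B')\cap\bigcup_{0\leq t\leq t_1}\varphi_{-t}(\supp(a_j\chi))=\emptyset$ by the choice of $B'$ and~\eqref{e:dyncon-2}; hence Lemma~\ref{l:basic-parametrix} with $T=t_1$, $a=a_j\chi$, $b=a_{j+1}$ yields
$$
\Op_h(a_j\chi)=Z'_j(\omega)B'(P_\theta-\omega^2)+e^{i\omega t_1/h}\,\Op_h(a_j\chi)\,U(t_1)\,\Op_h(a_{j+1})+\mathcal O(h^\infty)_{\Psi^{-\infty}},
$$
with $\|Z'_j(\omega)\|_{H^{-N}_h\to H^N_h}\leq C_N h^{-1}$ uniformly in $j$ (using $|a_j\chi|\leq 1$). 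Setting $D_{j+1}:=e^{i\omega t_1/h}D_j\Op_h(a_j\chi)U(t_1)$ and updating $Z$ accordingly closes the induction; at $j=L$ this is~\eqref{e:iterated-parametrix} with $J(\omega)=D_L=e^{i\omega Lt_1/h}\Op_h(a_0\chi)U(t_1)\Op_h(a_1\chi)U(t_1)\cdots\Op_h(a_{L-1}\chi)U(t_1)$, and $Z(\omega)=Z_L$ a sum of $L$ terms, each $D_j$ composed with an operator bounded by $\mathcal O(h^{-1})$ on $H^s_h\to H^{s+2}_h$.

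For the estimates I would use the Sobolev spaces defined via $(1-h^2\Delta_g)^{s/2}$, which are equivalent to $H^s_h(M)$ with $h$-independent constants and for which $U(t)$ is an isometry for all $t$. Conjugation by $(1-h^2\Delta_g)^{s/2}$ preserves $\Psi^0_{h,\nu}$ and the principal symbol, so~\eqref{e:opernorm} gives $\|\Op_h(a_j\chi)\|_{H^s_h\to H^s_h}\leq 1+\mathcal O(h^{1/2-\nu})$ uniformly in $j$; since $L\leq C\log(1/h)$ and $h^{1/2-\nu}\log(1/h)\to 0$, a product of at most $L$ such factors has norm $(1+\mathcal O(h^{1/2-\nu}))^L\leq 2$ for $h$ small. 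Hence $\|D_j\|_{H^s_h\to H^s_h}\leq C_s e^{-jt_1\Im\omega/h}\leq C_s e^{\tilde\beta jt_1}$ for $\omega\in\widetilde\Omega$, and as $D_L$ begins with the smoothing operator $\Op_h(a_0\chi)\in\Psi^{\comp}_h$ we also get $\|D_L\|_{H^{-N}_h\to H^N_h}\leq C_N e^{-Lt_1\Im\omega/h}$, which is~\eqref{e:iter-bounds-J}. For $Z$, each of the $L$ summands is bounded $H^s_h\to H^{s+2}_h$ by $C_s h^{-1}e^{\tilde\beta jt_1}$ (using $\|\mathcal R_Q(\omega)\|\leq Ch^{-1}$ and $\|Z'_j\|\leq C_N h^{-1}$), so $\|Z(\omega)\|_{H^s_h\to H^{s+2}_h}\leq C_s h^{-1}\sum_{j<L}e^{\tilde\beta jt_1}\leq C_{s,\varepsilon_1}h^{-1}e^{(\tilde\beta t_1+\varepsilon_1)L}$, the factor $L$ (number of terms) being absorbed into $e^{\varepsilon_1 L}$; this is~\eqref{e:iter-bounds-Z}. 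The accumulated $\mathcal O(h^\infty)_{\Psi^{-\infty}}$ remainders stay in $h^\infty\Psi^{-\infty}$ because at each step they are composed with $D_j$, whose norm on each $H^s_h$ grows at most like $e^{\tilde\beta Lt_1}=h^{-\tilde\beta Ct_1}$, and all terms act $H^2(M)\to H^2(M)$. For the final assertion (when $a_0=1$ near $K\cap S^*M$) I would apply Lemma~\ref{l:infinity-B} with $B\in\Psi^{\comp}_h$ elliptic on $K\cap S^*M$ and $\WFh(B)\subset\{a_0=1\}$: since $B\Op_h(1-a_0)=\mathcal O(h^\infty)$, the identity $I=(B_1+h\mathcal R_Q(\omega)B_2)B+\mathcal R_Q(\omega)(I-B_0)(P_\theta-\omega^2)+\mathcal O(h^\infty)$ becomes $I=(B_1+h\mathcal R_Q(\omega)B_2)B\,\Op_h(a_0)+\mathcal R_Q(\omega)(I-B_0)(P_\theta-\omega^2)+\mathcal O(h^\infty)$, into which we substitute the decomposition of $\Op_h(a_0)$ and collect the $(P_\theta-\omega^2)$ and $\Op_h(a_L)$ contributions; the bounds are preserved since $B,B_0,B_1,B_2$ and $h\mathcal R_Q(\omega)$ are bounded on the relevant spaces uniformly in $h$.

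The step I expect to be the main obstacle is the dynamical bookkeeping in the induction: arranging the single cutoff $\chi$ and the neighborhood $U$ so that, simultaneously for every $j$, the ``outgoing'' remainder $\Op_h(a_j(1-\chi))$ is microlocalized in the fixed set $W'$ where the semiclassically outgoing property of $\mathcal R_Q(\omega)$ (via Lemma~\ref{l:infinity-A1}) can push it into the $(P_\theta-\omega^2)$ term, while the ``incoming'' part $\Op_h(a_j\chi)$ satisfies the \emph{exact} support condition $\varphi_{-t_1}(\supp(a_j\chi))\cap\supp(1-a_{j+1})=\emptyset$ required by Lemma~\ref{l:basic-parametrix} --- that is, upgrading the merely $V$-localized hypothesis~\eqref{e:dyncon} into a usable propagation statement. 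The remaining ingredients --- the geometric series producing the exponential bounds, the interchange of $-h^2\Delta_g$ and $P_\theta$ (legitimate because~\eqref{e:dyncon-2} keeps all the relevant propagation in $\{r<r_1\}$), and the mild exotic calculus (entering only through the uniform bound $\|\Op_h(a_j\chi)\|_{L^2\to L^2}\leq 1+\mathcal O(h^{1/2-\nu})$) --- should be routine.
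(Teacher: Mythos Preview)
Your proposal is correct and follows essentially the same approach as the paper's proof. The paper uses a fixed cutoff $\psi\in C_c^\infty(\varphi_{t_1}(V);[0,1])$ with $\supp(1-\psi)\cap\Gamma_+\cap S^*M\cap W=\emptyset$, playing exactly the role of your $\chi$; it then applies Lemma~\ref{l:infinity-A1} to $\Op_h((1-\psi)a_j)$ and Lemma~\ref{l:basic-parametrix} to $\Op_h(\psi a_j)$, iterates, and closes with Lemma~\ref{l:infinity-B} for the identity statement---precisely your scheme. Two cosmetic differences: the paper takes $B'$ to be a multiplication operator in $C_c^\infty(\{r<r_1\})$ (so $B'(P_\theta-\omega^2)=B'(-h^2\Delta_g-\omega^2)$ holds exactly rather than modulo $\mathcal O(h^\infty)$), and it skips your elliptic splitting of $\Op_h(a_j(1-\chi))$ since Lemma~\ref{l:infinity-A1} only requires disjointness from $\Gamma_+\cap S^*M$, not from $\Gamma_+$. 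One small imprecision in your write-up: $\Gamma_+\cap S^*M$ is not compact, so you should ask for $\chi=1$ on a neighborhood of the compact set $\Gamma_+\cap S^*M\cap W$ (which suffices, since $\supp a_j\subset W$).
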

\begin{figure}
\includegraphics{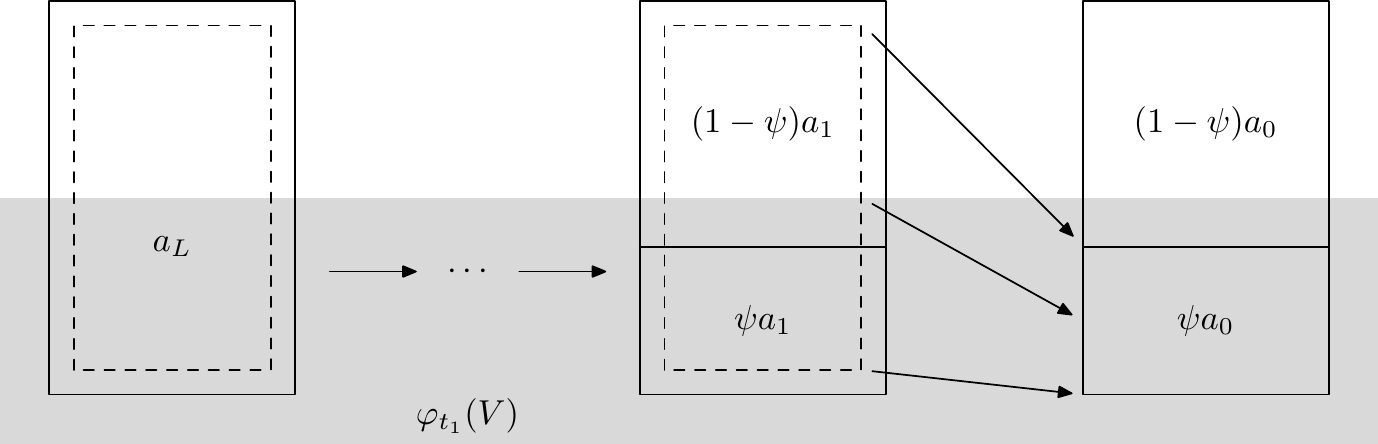}
\caption{An illustration of Lemma~\ref{l:iterated-parametrix}, showing
the supports of $\psi a_j$, $(1-\psi) a_j$, and $(\psi a_j)\circ\varphi_{t_1}$ (dashed),
as well as $\varphi_{t_1}(V)$ (shaded). The arrows correspond to $\varphi_{t_1}$.
At each step of the iteration, $(1-\psi)a_j$ is expressed using Lemma~\ref{l:infinity-A1}
and $\psi a_j$ is reduced to $a_{j+1}$ using Lemma~\ref{l:basic-parametrix}.}
\label{f:iterated-parametrix}
\end{figure}
\begin{proof}
Fix $h$-independent $\psi\in C_c^\infty(\varphi_{t_1}(V);[0,1])$ such that
$$
\supp(1-\psi)\cap \Gamma_+\cap S^*M\cap W=\emptyset.
$$
Then $\supp((1-\psi)a_j)$ is contained in an $h$-independent compact
subset of $T^*M$ not intersecting $\Gamma_+\cap S^*M$, thus by Lemma~\ref{l:infinity-A1}
for an appropriate choice of $Q$ we have
for $j=0,\dots,L-1$
\begin{equation}
  \label{e:steppi-1}
\Op_h\big((1-\psi)a_j\big)=\Op_h\big((1-\psi)a_j\big)\mathcal R_{Q}(\omega)(P_\theta-\omega^2)
+\mathcal O(h^\infty)_{\Psi^{-\infty}}.
\end{equation}
Next, by~\eqref{e:dyncon} we have
$$
\varphi_{-t_1}(\supp(\psi a_j))\cap \supp(1-a_{j+1})=\emptyset.
$$
Using~\eqref{e:dyncon-2},
fix a multiplication operator $B'=B'(x)\in C_c^\infty(M;[0,1])$ such that
$$
\supp B'\subset \{r<r_1\},\quad
\supp(1-B')\cap \bigcup_{t=0}^{t_1}\varphi_{-t}(W)=\emptyset.
$$
Since $P_\theta=-h^2\Delta_g$ on $\{r<r_1\}$, we have
$B'(P_\theta-\omega^2)=B'(-h^2\Delta_g-\omega^2)$.
Therefore by Lemma~\ref{l:basic-parametrix}, 
\begin{equation}
  \label{e:steppi-2}
\Op_h(\psi a_j)=Z_j(\omega)B'(P_\theta-\omega^2)+e^{i\omega t_1/h}\Op_h(\psi a_j)
U(t_1)\Op_h(a_{j+1})+\mathcal O(h^\infty)_{\Psi^{-\infty}}
\end{equation}
for all $\omega\in\widetilde\Omega$,
where $Z_j(\omega)$ is holomorphic in $\omega\in\widetilde\Omega$ and satisfies
$$
\|Z_j(\omega)\|_{H^{-N}_h\to H^N_h}\leq C_Nh^{-1}
$$
and the constant $C_N$, as well as the constants in $\mathcal O(h^\infty)_{\Psi^{-\infty}}$, is independent of $h$ and $j$.

Adding~\eqref{e:steppi-1} and~\eqref{e:steppi-2} and iterating in $j$, we obtain~\eqref{e:iterated-parametrix} with
$$
\begin{aligned}
Z(\omega)&=\sum_{j=0}^{L-1}e^{i\omega jt_1/h}\Big(\prod_{\ell=0}^{j-1}\Op_h(\psi a_\ell)U(t_1)\Big)\big(
\Op_h\big((1-\psi)a_j\big)\mathcal R_{Q}(\omega)+Z_j(\omega)B'\big),\\
J(\omega)&=e^{i\omega Lt_1/h}\prod_{j=0}^{L-1}\Op_h(\psi a_j)U(t_1).
\end{aligned}
$$
The bounds~\eqref{e:iter-bounds-Z} and~\eqref{e:iter-bounds-J}
follow from here and estimate on the operator norm following from~\eqref{e:opernorm}:
$$
\max_j \|\Op_h(\psi a_j)\|_{L^2\to L^2}\leq 1+o(1)\quad\text{as }h\to 0.
$$
In particular, for any fixed $\varepsilon_1>0$ we have
$$
\max_{0\leq j\leq L}\Big\|\prod_{\ell=0}^{j-1}\Op_h(\psi a_\ell)U(t_1)\Big\|_{H^{-N}_h\to H^N_h}
\leq C_Ne^{\varepsilon_1 L}.
$$
To show the last statement of the lemma, assume that $a_0=1$ on an $h$-independent
neighborhood $\mathcal U$ of $K\cap S^*M$. Take $B\in\Psi^{\comp}_h(M)$ elliptic on $K\cap S^*M$
and satisfying $\WFh(B)\subset\mathcal U$. Then by Lemma~\ref{l:infinity-B},
we have for an appropriate choice of $Q,B_0,B_1,B_2\in\Psi^{\comp}_h(M)$,
$$
I=\mathcal R_Q(\omega)(I-B_0)(P_\theta-\omega^2)+(B_1+h\mathcal R_Q(\omega)B_2)B\Op_h(a_0)+\mathcal O(h^\infty)_{\Psi^{-\infty}}.
$$
Combining this with the representation~\eqref{e:iterated-parametrix} of $\Op_h(a_0)$,
we obtain~\eqref{e:iterated-parametrix} with the identity operator on the left-hand side.
\end{proof}

\subsection{Wave propagator}
  \label{s:reduce-wave}

We next study the long time behavior of the half-wave propagator $U(t)=\exp(-it\sqrt{-\Delta_g})$.
We first prove a microlocal estimate on the free half-wave propagator on $\mathbb R^d$,
$$
U_0(t)=\exp(-it\sqrt{-\Delta_0}):\ L^2(\mathbb R^d)\to L^2(\mathbb R^d),
$$
where $\Delta_0$ is the flat Laplacian.
\begin{lemma}
\label{l:EuclideanOutgoing}
Let $A_1,A_2\in \Psi^k_h(\mathbb R^d)$ such that there exists $R>0$ with 
$$
\WFh(A_1)\cup \WFh(A_2)\ \subset\ \{|y|<R\},
$$
 at least one of $\WFh(A_1)$, $\WFh(A_2)$  is a compact subset of $T^*\mathbb R^d\setminus 0$,
and
\begin{equation}
  \label{e:eo-cond}
(y',\eta)\in\WFh(A_1),\
\eta\neq 0,\
t\geq 0\ \Longrightarrow\
\Big(y'+t{\eta\over|\eta|},\eta\Big)\notin\WFh(A_2).
\end{equation}
Then we have the following version of propagation of singularities which is uniform in $t\geq 0$: 
\begin{equation}
  \label{e:eo-conc}
A_2 U_0(t) A_1=\mathcal O(h^\infty)_{\Psi^{-\infty}(\mathbb R^d)}.
\end{equation}
\end{lemma}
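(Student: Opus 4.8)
The plan is to reduce the statement to a uniform semiclassical propagation-of-singularities estimate for the Schrödinger propagator and then exploit the explicit geometry of the free flow on $\mathbb R^d$, where bicharacteristics are straight lines travelled at unit speed. First I would dispose of the trivial cases: if one of $\WFh(A_1),\WFh(A_2)$ avoids the zero section (which is assumed) and the other avoids $\{|\eta|\in[C^{-1},C]\}$ — more precisely, if the two wavefront sets are separated in frequency — then the composition is $\mathcal O(h^\infty)$ by microlocality of the propagator away from the relevant energy surfaces, arguing as in Lemma~\ref{l:sqrt} by inserting a cutoff $\psi(-h^2\Delta_0)$. So I may assume both $A_i\in\Psi^{\comp}_h(\mathbb R^d)$ with wavefront sets in a fixed compact subset of $T^*\mathbb R^d\setminus 0$, say in $\{C^{-1}\le|\eta|\le C,\ |y|<R\}$.

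The core of the argument is the following \emph{time-uniform} Egorov/propagation statement. Write $U_0(t)\Op_h(a)U_0(-t)$; by the (homogeneous) Egorov theorem for the half-wave group — which on $\mathbb R^d$ can be proved directly since the flow $\varphi_t(y,\eta)=(y+t\eta/|\eta|,\eta)$ is explicit and affine in $y$ — one has $A_2U_0(t)A_1 = A_2\,\Op_h(a_1\circ\varphi_t)\,U_0(t) + \mathcal O(h^\infty)$ where $a_1$ is the symbol of $A_1$ and the error is uniform in $t\ge 0$ because all the symbols $a_1\circ\varphi_t$ remain in a bounded set of $S^0_h$ (their $y$-derivatives do not grow since $\partial_y\varphi_t=\mathrm{Id}$). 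The hypothesis~\eqref{e:eo-cond} says exactly that $\varphi_t(\WFh(A_1))\cap\WFh(A_2)=\emptyset$ for every $t\ge 0$, so $A_2\Op_h(a_1\circ\varphi_t)=\mathcal O(h^\infty)_{\Psi^{-\infty}}$ by symbol calculus — but only with a constant that a priori depends on $t$ through the seminorms of $a_1\circ\varphi_t$, and those are bounded; the remaining issue is that the separation between $\varphi_t(\WFh(A_1))$ and $\WFh(A_2)$ might degenerate as $t\to\infty$.

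This is where the finite-speed/finite-support geometry does the work and is, to my mind, the one point requiring care. Since $\WFh(A_1)\subset\{|y|<R,\ |\eta|\ge C^{-1}\}$, for $t\ge t_0:=3R$ the set $\varphi_t(\WFh(A_1))$ is contained in $\{|y|>2R\}$, hence disjoint from $\WFh(A_2)\subset\{|y|<R\}$ \emph{with a fixed positive separation} uniformly in $t\ge t_0$; so for $t\ge t_0$ the estimate $A_2U_0(t)A_1=\mathcal O(h^\infty)$ holds uniformly. For the compact time interval $t\in[0,t_0]$, the standard (non-uniform) propagation of singularities for $U_0(t)$ together with the hypothesis~\eqref{e:eo-cond} already gives $A_2U_0(t)A_1=\mathcal O(h^\infty)$, and on a compact interval the implied constants are automatically uniform. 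Splicing the two ranges of $t$ yields~\eqref{e:eo-conc}. Concretely I would realize the $t\ge t_0$ piece by a commutator/energy argument: choose $\chi\in C_c^\infty(\{|y|<2R\})$ with $\chi=1$ on $\{|y|\le R\}$; then $A_2 = A_2\Op_h(\chi)+\mathcal O(h^\infty)$ while, because of the support and lower frequency bound, $\Op_h(\chi)U_0(t)A_1 = \mathcal O(h^\infty)$ for $t\ge t_0$ by propagating a cutoff forward along the (explicit, uniformly transversal to $\{|y|\le 2R\}$ after time $t_0$) flow and using that all time slices stay in a bounded symbol class. The main obstacle, then, is purely bookkeeping: checking that the Egorov remainder and the elliptic/nonstationary-phase estimates are genuinely uniform in $t$, which follows from the non-expansive nature of the Euclidean geodesic flow in the base variables.
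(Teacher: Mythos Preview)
Your approach via Egorov's theorem is different from the paper's, and it contains a genuine gap in the uniformity argument.

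You assert that the symbols $a_1\circ\varphi_t$ remain in a bounded set of $S^0_h$ for all $t\ge 0$, justifying this by ``$\partial_y\varphi_t=\mathrm{Id}$''. But the symbol class also controls $\eta$-derivatives, and these do grow: with $\varphi_t(y,\eta)=(y+t\eta/|\eta|,\eta)$ one has
\[
\partial_{\eta_k}\big(a_1\circ\varphi_t\big)(y,\eta)
=(\partial_{\eta_k}a_1)\circ\varphi_t
+t\sum_j\partial_{\eta_k}\Big(\frac{\eta_j}{|\eta|}\Big)(\partial_{y_j}a_1)\circ\varphi_t,
\]
so each $\eta$-derivative picks up a factor of $t$, and the $S^0_h$ seminorms of $a_1\circ\varphi_t$ grow polynomially in $t$. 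Consequently neither the Egorov remainder nor the pseudodifferential composition estimate $A_2\Op_h(a_1\circ\varphi_t)=\mathcal O(h^\infty)$ is automatically uniform as $t\to\infty$; your claim that ``the non-expansive nature of the Euclidean geodesic flow in the base variables'' suffices overlooks exactly this fiber-direction expansion. The same objection applies to your fallback argument with the spatial cutoff $\chi$, which still relies on ``all time slices stay in a bounded symbol class''.

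The paper avoids this issue entirely by writing the Schwartz kernel of $\Op_h(a_2)U_0(t)\Op_h(a_1)^*$ directly as the oscillatory integral
\[
(2\pi h)^{-d}\int e^{\frac{i}{h}(\langle y-y',\eta\rangle - t|\eta|)}a_2(y,\eta)\overline{a_1(y',\eta)}\,d\eta
\]
and observing that the phase gradient satisfies $|\partial_\eta\Phi|=|y-y'-t\eta/|\eta||\ge c\langle t\rangle$ on the support of the amplitude (for bounded $t$ this is the hypothesis~\eqref{e:eo-cond}; for large $t$ it follows from $|y|,|y'|<R$). Integration by parts in $\eta$ then gives decay in both $h$ and $t$ simultaneously. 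The point is that the growth in $t$ appears in the \emph{phase gradient}, where it helps, rather than in the symbol seminorms, where in your scheme it hurts. Your splitting into $t\le t_0$ and $t\ge t_0$ is the right instinct, but for the large-$t$ piece you need a mechanism that actually exploits the growing separation, and the direct nonstationary phase does precisely that.
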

\begin{proof}
Write $A_1=\Op_h(a_1)^*+\mathcal O(h^\infty)_{\Psi^{-\infty}}$,
$A_2=\Op_h(a_2)+\mathcal O(h^\infty)_{\Psi^{-\infty}}$ for some
$a_1,a_2$ whose supports satisfy the conditions
imposed on $\WFh(A_1)$, $\WFh(A_2)$, including~\eqref{e:eo-cond}.
The Schwartz kernel of $\Op_h(a_2) U_0(t)\Op_h(a_1)^*$ is compactly supported and given by 
\begin{gather} 
\label{e:kernel}
\mathcal K(y,y')=(2\pi h)^{-2d}\int_{\mathbb R^d} e^{\frac{i}{h}(\la y-y',\eta\ra -t|\eta|)}a_2(y,\eta)\overline{a_1(y',\eta)}\,d\eta.
\end{gather}
Put $\Phi=\langle y-y',\eta\rangle-t|\eta|$. Then there exists $c>0$ such that on the support of $a_2(y,\eta)\overline{a_1(y',\eta)}$,
\begin{equation}
  \label{e:hula}
|\partial_\eta\Phi|=
\Big|y-y'-t\frac{\eta}{|\eta|}\Big|\geq c\la t\ra>0.
\end{equation}
Indeed, since $y,y'$ vary in a compact set and $\eta$ is bounded
away from zero, it is enough to
consider the case of bounded $t$. Then~\eqref{e:hula} follows from~\eqref{e:eo-cond}.

Now, repeated integration by parts in $\eta$ gives that for each $N$,
$$
\|\mathcal K\|_{C^N(\mathbb R^{2d})}\ \leq\ C_Nh^N\langle t\rangle^{-N}.
$$
This completes the proof.
\end{proof}
We next use $U_0(t)$ to write a parametrix for the propagator $U(t)$.
For $\psi_0\in \Cc(M)$ with $\supp (1-\psi_0)\subset \{r> r_0\}$
and $u\in L^2(M)$,
we define
$$
(1-\psi_0)U_0(t)(1-\psi_0)u\in L^2(M)
$$
as follows: we pull back the restriction of $(1-\psi_0)u$ to each infinite end
to $\mathbb R^d$ using the Euclidean coordinate, apply $(1-\psi_0) U_0(t)$,
and take the sum of the resulting functions pulled back to $M$. This gives
an operator
\begin{equation}
  \label{e:pushed-operator}
(1-\psi_0)U_0(t)(1-\psi_0):L^2(M)\to L^2(M).
\end{equation}
Recall the sets
$\mathcal E_{\pm},\mathcal E_{\pm}^\circ$
defined in~\eqref{e:de-sets-M}.
\begin{lemma}
\label{l:comparePropagator}
Suppose that $A_{\pm}\in \Ph{\comp}{}(M)$, $\psi_0\in\Cc(M)$ satisfy for some $r_2>r_0$
(see Figure~\ref{f:comparePropagator})
$$
\WFh(A_\pm)\subset\mc{E}_{\pm}^\circ\cap \{r> r_2\},\quad
\supp\psi_0\subset\{r<r_2\},\quad
\supp(1-\psi_0)\subset \{r>r_0\}.
$$
Then we have uniformly in $0\leq t\leq C h^{-1}$
\begin{gather}
\label{e:forward}
U(t)A_+=(1-\psi_0)U_0(t)(1-\psi_0)A_++\O{}(h^\infty)_{\Psi^{-\infty}}\\
\label{e:backward}
U(-t)A_-=(1-\psi_0)U_0(-t)(1-\psi_0)A_-+\O{}(h^\infty)_{\Psi^{-\infty}}.
\end{gather}
\end{lemma}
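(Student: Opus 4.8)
The plan is to compare $U(\pm t)A_\pm$ with the transplanted free propagator $U_0(\pm t)$ by a Duhamel argument, exploiting that $\WFh(A_\pm)$ lies in $\mathcal E_\pm^\circ$. By~\eqref{e:convexinf-1} the forward (resp.\ backward) flowout $\bigcup_{\pm s\ge 0}\varphi_s(\WFh(A_\pm))$ stays inside a single Euclidean end, in the region $\{r>r_2\}$, with $|\eta|$ constant along the flow; there the metric is flat and one expects $U(\pm s)$ to agree with $U_0(\pm s)$ modulo $\mathcal O(h^\infty)_{\Psi^{-\infty}}$. I will write out the argument for~\eqref{e:forward}; the proof of~\eqref{e:backward} is the same after replacing $t$ by $-t$, $\mathcal E_+^\circ$ by $\mathcal E_-^\circ$, and using the lower sign in~\eqref{e:convexinf-1}.

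First I would set $v(t):=U(t)A_+-(1-\psi_0)U_0(t)(1-\psi_0)A_+$, with the second term transplanted to $M$ as in~\eqref{e:pushed-operator}. Since $\supp\psi_0\subset\{r<r_2\}$ is disjoint from $\WFh(A_+)\subset\{r>r_2\}$, one has $\psi_0A_+=\mathcal O(h^\infty)_{\Psi^{-\infty}}$, hence $v(0)=A_+-(1-\psi_0)^2A_+=\mathcal O(h^\infty)_{\Psi^{-\infty}}$, and $v$ solves $(hD_t+h\sqrt{-\Delta_g})v(t)=F(t)$ with $F(t):=-(hD_t+h\sqrt{-\Delta_g})(1-\psi_0)U_0(t)(1-\psi_0)A_+$. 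By Duhamel's formula $v(t)=U(t)v(0)+\frac{i}{h}\int_0^tU(t-s)F(s)\,ds$; using the uniform boundedness of $U(t)$ on the semiclassical Sobolev spaces $H^N_h(M)$, the loss of $h^{-2}$ coming from the factor $\frac1h$ and from $t\le Ch^{-1}$ is absorbed by any power of $h$, so it remains to prove $F(s)=\mathcal O(h^\infty)_{\Psi^{-\infty}}$ uniformly in $s\ge0$.

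To estimate $F(s)$ I would use that $hD_t$ commutes with the multiplication operator $1-\psi_0$ and that $hD_sU_0(s)=-h\sqrt{-\Delta_0}\,U_0(s)$ on the ends, whence
$$F(s)=(1-\psi_0)\,h\sqrt{-\Delta_0}\,U_0(s)(1-\psi_0)A_+-h\sqrt{-\Delta_g}\,(1-\psi_0)U_0(s)(1-\psi_0)A_+.$$
Transplanting to $\mathbb R^d$, Lemma~\ref{l:EuclideanOutgoing} applied with $A_1=(1-\psi_0)A_+$, whose wavefront set is a compact subset of $\mathcal E_{+,\mathbb R}^\circ\cap\{|y|>r_2\}$, gives $B\,U_0(s)(1-\psi_0)A_+=\mathcal O(h^\infty)_{\Psi^{-\infty}}$ uniformly in $s\ge0$ for every $B$ whose wavefront set avoids the forward flowout of $\WFh(A_+)$: indeed $\langle y',\eta\rangle>0$ forces $|y'+s\eta/|\eta||$ to be strictly increasing, so the flowout never re-enters $\{|y|\le r_2\}$, which is exactly condition~\eqref{e:eo-cond}. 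I would then fix $\tilde\chi\in\Psi^0_h(M)$ with base projection of $\WFh(\tilde\chi)$ contained in $\{r>r_2\}$, with $|\eta|$ in a fixed compact subset of $(0,\infty)$ on $\WFh(\tilde\chi)$, and with $\tilde\chi$ equal to the identity microlocally near the flowout; then $U_0(s)(1-\psi_0)A_+=\tilde\chi\,U_0(s)(1-\psi_0)A_+$ modulo $\mathcal O(h^\infty)_{\Psi^{-\infty}}$, the remaining cutoffs $1-\psi_0$ to the left of $U_0(s)$ may be dropped, and since $\Delta_g=\Delta_0$ on $\{r>r_0\}$ the operators $h\sqrt{-\Delta_g}$ and $h\sqrt{-\Delta_0}$ have the same full symbol microlocally on $\WFh(\tilde\chi)$, so (cf.\ Lemma~\ref{l:sqrt}) $(h\sqrt{-\Delta_g}-h\sqrt{-\Delta_0})\tilde\chi=\mathcal O(h^\infty)_{\Psi^{-\infty}}$. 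Feeding these facts into the displayed identity gives $F(s)=(h\sqrt{-\Delta_0}-h\sqrt{-\Delta_g})\tilde\chi\,U_0(s)(1-\psi_0)A_++\mathcal O(h^\infty)_{\Psi^{-\infty}}=\mathcal O(h^\infty)_{\Psi^{-\infty}}$ uniformly in $s\ge0$, as required.

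I expect the main obstacle to be the bookkeeping around the nonlocal operator $h\sqrt{-\Delta_g}$ and the transplantation in~\eqref{e:pushed-operator}: one must justify carefully that $h\sqrt{-\Delta_g}$ coincides with the transplanted $h\sqrt{-\Delta_0}$ modulo $\mathcal O(h^\infty)_{\Psi^{-\infty}}$ once composed with an operator microlocalized in the flat region near spatial infinity, and that the cutoffs $1-\psi_0$ contribute only negligible commutator terms there. Granting this, the uniformity in $t\in[0,Ch^{-1}]$ is automatic from the Duhamel estimate, since all errors are $\mathcal O(h^\infty)$.
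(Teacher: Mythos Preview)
Your Duhamel strategy is correct and leads to a valid proof, but it takes a different route from the paper. The paper does not use the first-order half-wave equation; instead it applies Duhamel for the \emph{second-order} wave operator $\Box_g=\partial_t^2-\Delta_g$, writing
\[
W(t)=\cos(t\sqrt{-\Delta_g})W(0)+\frac{\sin(t\sqrt{-\Delta_g})}{\sqrt{-\Delta_g}}W'(0)+\int_0^t \frac{\sin((t-t')\sqrt{-\Delta_g})}{\sqrt{-\Delta_g}}\,\Box_g W(t')\,dt'
\]
for $W(t)=\big((1-\psi_0)U_0(t)-U(t)(1-\psi_0)\big)(1-\psi_0)A_+$. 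The point is that $\Box_g$ is a \emph{differential} operator, so the forcing term is exactly $h^2\Box_g W(t)=[h^2\Delta_g,\psi_0]\,U_0(t)(1-\psi_0)A_+$, with $[h^2\Delta_g,\psi_0]$ a first-order differential operator whose wavefront set sits in the fixed compact set $\supp d\psi_0\subset\{r_0<r<r_2\}$. Lemma~\ref{l:EuclideanOutgoing} then applies directly with $A_2=[h^2\Delta_g,\psi_0]$, and no comparison of square roots is needed except at $t=0$ (the $W'(0)$ term), where everything is composed with the compactly microlocalized $A_+$ and the locality of the functional-calculus symbol expansion suffices.

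Your first-order version forces you to compare the nonlocal operators $h\sqrt{-\Delta_g}$ and $h\sqrt{-\Delta_0}$ after composition with $U_0(s)(1-\psi_0)A_+$, whose microsupport runs off to infinity as $s\to\infty$. Your $\tilde\chi$ therefore cannot lie in $\Psi^{\comp}_h(M)$, so Lemma~\ref{l:sqrt} as stated does not apply, and Lemma~\ref{l:EuclideanOutgoing} does not apply directly to $A_2=I-\tilde\chi$ either (its hypotheses require $\WFh(A_2)\subset\{|y|<R\}$). Both issues can be fixed---split $I-\tilde\chi$ into a Fourier-multiplier piece in $\eta$, which commutes with $U_0(s)$ and is killed by the frequency localization of $A_+$, plus a spatially compactly supported piece handled by Lemma~\ref{l:EuclideanOutgoing}; then invoke the locality of the full-symbol expansion in the functional calculus (as the paper does for $W'(0)$) to compare the two square roots on $\WFh(\tilde\chi)$---but this is precisely the bookkeeping you flagged, and it is genuinely heavier than the paper's route. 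The $\Box_g$ trick buys you a local forcing term with compactly supported wavefront set, which is why the paper prefers it.
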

\begin{figure}
\includegraphics{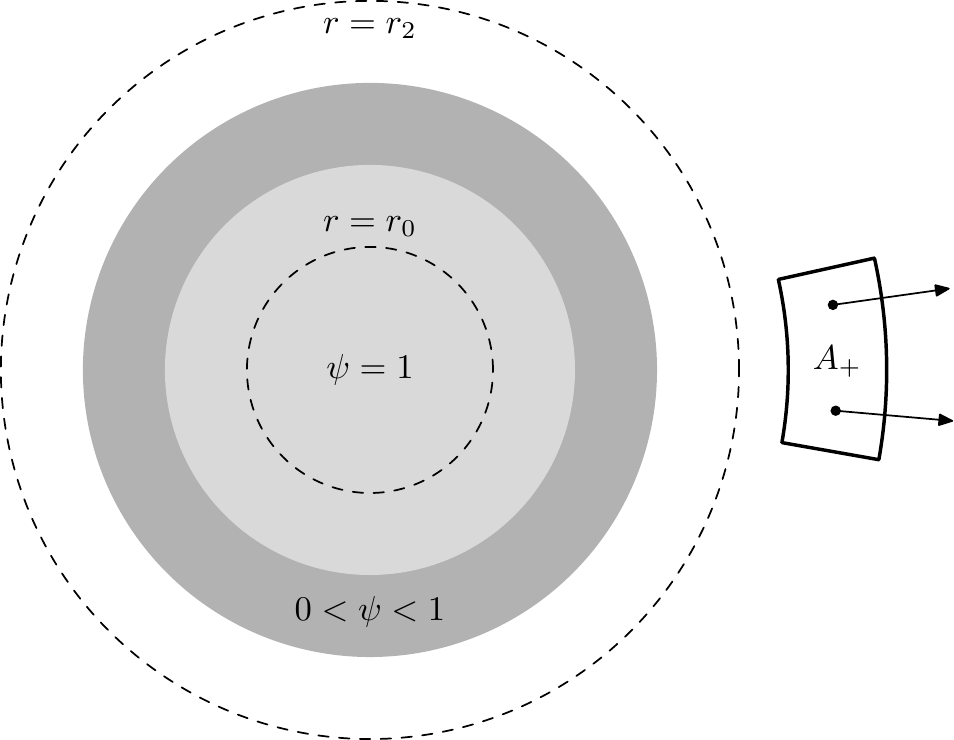}
\caption{An illustration of Lemma~\ref{l:comparePropagator} when $0\leq \psi_0\leq 1$, showing
the regions $\psi_0=1$ and $0<\psi_0<1$ (shaded) and the projection
of $\WFh(A_+)$ onto $M$. The points $(x,\xi)$ in $\WFh(A)$, pictured by arrows,
give rise to trajectories escaping to infinity in the future and
never entering $\supp\psi_0$.}
\label{f:comparePropagator}
\end{figure}
\begin{proof}
We prove~\eqref{e:forward}, with~\eqref{e:backward} established similarly.
For simplicity of notation, we present the argument in the case when $M$ is diffeomorphic to $\mathbb R^d$.
The general case is proved in the same way, reducing to the case when $A_+$ is supported
on one infinite end and treating $1-\psi_0$ on this infinite end as an operator $L^2(M)\to  L^2(\mathbb R^d)$
and $L^2(\mathbb R^d)\to L^2(M)$.
We identify $M$ with $\mathbb R^d$ and use the quantization~\eqref{e:standard-op}.

Since $U_0(t),U(t)$ are bounded uniformly in $t$ on all Sobolev spaces
and $\WFh(A_+)\cap\supp\psi_0=\emptyset$,
$$
\begin{aligned}
U(t)A_+&=U(t)(1-\psi_0)^2A_++\mathcal O(h^\infty)_{\Psi^{-\infty}}.
\end{aligned}
$$
Therefore it remains to show that uniformly in $0\leq t\leq Ch^{-1}$,
\begin{equation}
  \label{e:cornwall2}
W(t)=\mathcal O(h^\infty)_{\Psi^{-\infty}},
\end{equation}
where the operator $W(t)$ on $L^2(M)$ is defined by
$$
W(t):=\big((1-\psi_0)U_0(t)-U(t)(1-\psi_0)\big)(1-\psi_0)A_+.
$$
Using the wave operator $\Box_g=\partial_t^2-\Delta_g$, we write
\begin{equation}
  \label{e:Wrep}
W(t)=\cos(t\sqrt{-\Delta_g})W(0)+{\sin(t\sqrt{-\Delta_g})\over \sqrt{-\Delta_g}}W'(0)+\int_0^t {\sin\big((t-t')\sqrt{-\Delta_g}\big)\over \sqrt{-\Delta_g}}\Box_g W(t')\,dt'.
\end{equation}
We compute
\begin{equation}
  \label{e:ilya0}
W(0)=0.
\end{equation}
Next,
\begin{equation}
  \label{e:ilya1}
ihW'(0)=\big((1-\psi_0)h\sqrt{-\Delta_0}-h\sqrt{-\Delta_g}(1-\psi_0)\big)(1-\psi_0)A_+=\mathcal O(h^\infty)_{\Psi^{-\infty}}.
\end{equation}
Indeed, by~\eqref{e:sqrt} both
$(1-\psi_0)h\sqrt{-\Delta_0}(1-\psi_0)A_+$ and
$h\sqrt{-\Delta_g}(1-\psi_0)^2A_+$ are in $\Psi^0_h(M)$.
As explained in the discussion following~\cite[Theorem~8.7]{d-s},
the asymptotic expansion for the full symbol of each of these operators
at some point can be computed using only the derivatives of
$\psi_0$ and the full symbols of $A_+, \Delta_0,\Delta_g$
at this point. Since $\Delta_0=\Delta_g$ and $\psi_0=0$ on $\{r>r_2\}\supset\WFh(A_+)$,
we obtain~\eqref{e:ilya1}.

Finally, since $\Delta_0=\Delta_g$ on $\{r>r_0\}\supset\supp(1-\psi_0)$, we have
$$
h^2\Box_g W(t)=[h^2\Delta_g,\psi_0]U_0(t)(1-\psi_0)A_+.
$$
Now, with $A_2:=[h^2\Delta_g,\psi_0]$
$$
\WFh(A_2)\ \subset\ \supp d\psi_0\ \subset\ \{r_0<r<r_2\}.
$$
Then $A_2$ and $A_1:=A_+$ satisfy~\eqref{e:eo-cond},
thus by Lemma~\ref{l:EuclideanOutgoing}
\begin{equation}
  \label{e:ilya2}
h^2\Box_g W(t)=\mathcal O(h^\infty)_{\Psi^{-\infty}}.
\end{equation}
Now~\eqref{e:cornwall2} follows from~\eqref{e:Wrep}--\eqref{e:ilya2}, the bound
$t\leq Ch^{-1}$,
and the fact that for each $s$, the operators
$$
\cos(t\sqrt{-\Delta_g}),
{\sin(t\sqrt{-\Delta_g})\over\sqrt{-\Delta_g}}:H^s_h(M)\to H^s_h(M)
$$
are bounded in norm by $C\langle t\rangle$.
\end{proof}
The next lemma shows that for times $t=\mathcal O(\log(1/h))$, the cutoff wave propagator
$A_2U(t)A_1$, where $A_j\in\Psi^{\comp}_{h,\nu}(T^*M)$
and $\WFh(A_j)$ lies near $S^*M$, can be expressed
in terms of cutoff wave propagators for bounded time.
It relies on Lemmas~\ref{l:EuclideanOutgoing} and~\ref{l:comparePropagator}
and is a key component of the proof of Lemma~\ref{l:propLong} below.
\begin{lemma}
  \label{l:super-sandwich}
Let $A_1\in\Psi^{\comp}_{h,\nu}(M)$,
$A_2\in\Psi^0_{h,\nu}(M)$, and
$\chi\in S^0_h(T^*M;[0,1])$ satisfy
for some $\varepsilon_E>0$ and~$r_2>r_0$
\begin{gather}
  \label{e:chi-sandwich-1}
\WFh(A_1)\cup\WFh(A_2)\cup\supp\chi\subset \{r<r_2\},\quad
\WFh(A_1)\subset \{|\xi|_g^2\in (1-\varepsilon_E,1+\varepsilon_E)\},
\\
  \label{e:chi-sandwich-2}
\supp(1-\chi)\cap \{|\xi|_g^2\in [1-\varepsilon_E,1+\varepsilon_E]\}\cap \{r\leq r_0\}=\emptyset.
\end{gather}
Put
$T:=\sqrt{r_2^2-r_0^2}$
and let $C$ be an $h$-independent constant.
Then for each sequence of times
$$
t_1,\dots,t_L\geq T,\quad
L\leq Ch^{-1},\quad
t_j\leq C,
$$
we have
$$
A_2 U(t_1+\dots+t_L)A_1
=A_2 U(t_1)\Op_h(\chi) U(t_2)\cdots\Op_h(\chi) U(t_L)A_1+\mathcal O(h^\infty)_{\Psi^{-\infty}}.
$$
\end{lemma}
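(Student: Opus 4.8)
The plan is to prove the identity by induction on $L$, in a form flexible enough to survive the induction. Namely, I claim that for every $A_2\in\Psi^0_{h,\nu}(M)$ with $\WFh(A_2)\subset\{r<r_2\}$ and every bounded operator $Q$ on $L^2(M)$ (uniformly bounded on all semiclassical Sobolev spaces) whose wavefront set lies in a fixed compact subset $K_0$ of $\{r<r_2\}\cap\{|\xi|_g^2\in(1-\varepsilon_E,1+\varepsilon_E)\}$, one has
\[
A_2U(t_1)\Op_h(\chi)U(t_2)\cdots\Op_h(\chi)U(t_L)Q=A_2U(t_1+\dots+t_L)Q+\mathcal O(h^\infty)_{\Psi^{-\infty}},
\]
with constants depending only on $K_0$, a bound for $\|Q\|$, and the fixed data; taking $Q=A_1$ gives the lemma. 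Allowing a general $Q$ sidesteps the fact that $\Op_h(\chi)$ need not be compactly microlocalized: the relevant point is that every tail of the product, starting with an $\Op_h(\chi)$ and ending with $Q$, again has wavefront set inside $\supp\chi\cap\{|\xi|_g^2\in(1-\varepsilon_E,1+\varepsilon_E)\}$, a fixed compact subset of $\{r<r_2\}$, because the wavefront set of a composition lies in the intersection of the factors' wavefront sets and $U(t)$ preserves the energy surface.

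Two geometric facts, both immediate from the convexity estimates~\eqref{e:convexinf-1}, \eqref{e:convexinf-2} and the choice $T=\sqrt{r_2^2-r_0^2}$, do the dynamical work: (G1) if $r(\mathbf x)<r_2$, $t\geq T$ and $r(\varphi_t(\mathbf x))>r_0$, then $\varphi_t(\mathbf x)\in\mathcal E_+^\circ$, for otherwise $\varphi_t(\mathbf x)$ would lie in an infinite end with $\langle y,\eta\rangle\le 0$ and running~\eqref{e:convexinf-1} backwards would force $r(\mathbf x)\geq\sqrt{r_0^2+t^2}\geq r_2$; and (G2) for $t\geq T$ one has $\varphi_t(\mathcal E_+^\circ)\subset\mathcal E_+^\circ\cap\{r>r_2\}$, since~\eqref{e:convexinf-1} pushes a point of $\mathcal E_+^\circ$ into $\mathcal E_+^\circ$ with $r>\sqrt{r_0^2+T^2}=r_2$. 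The case $L=1$ of the claim is trivial. For $L\geq 2$ I first collapse the interior cutoffs: set $Q':=\Op_h(\chi)U(t_L)Q$, which by the discussion above has wavefront set in $\supp\chi\cap\varphi_{t_L}(K_0)$, again a fixed compact subset of $\{r<r_2\}\cap\{|\xi|_g^2\in(1-\varepsilon_E,1+\varepsilon_E)\}$; then the inductive hypothesis at level $L-1$ applied with $Q$ replaced by $Q'$ gives, with $s:=t_1+\dots+t_{L-1}\in[T,Ch^{-1}]$,
\[
A_2U(t_1)\Op_h(\chi)\cdots\Op_h(\chi)U(t_L)Q=A_2U(s)\Op_h(\chi)U(t_L)Q+\mathcal O(h^\infty)_{\Psi^{-\infty}}
\]
(composing the remainder with the uniformly bounded $A_2U(s)$ on the left, $U$ being unitary, is harmless). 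It therefore remains to prove $A_2U(s)(I-\Op_h(\chi))U(t_L)Q=\mathcal O(h^\infty)_{\Psi^{-\infty}}$.

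Since $t_L$ is bounded, propagation of singularities gives $\WFh(U(t_L)Q)\subset\varphi_{t_L}(K_0)$; deleting the region where $\chi\equiv1$ and invoking~\eqref{e:chi-sandwich-2} and (G1) (using $t_L\geq T$, $K_0\subset\{r<r_2\}$) yields $\WFh\big((I-\Op_h(\chi))U(t_L)Q\big)\subset\mathcal E_+^\circ\cap\{r_0<r<R\}$ for a fixed $R$. Hence $(I-\Op_h(\chi))U(t_L)Q=\Op_h(d)(I-\Op_h(\chi))U(t_L)Q+\mathcal O(h^\infty)_{\Psi^{-\infty}}$ for some $d\in\Cc(T^*M)$ supported in $\mathcal E_+^\circ\cap\{r_0<r<R'\}\cap\{|\xi|_g^2\in(1-\varepsilon_E,1+\varepsilon_E)\}$. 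Writing $U(s)=U(s-T)U(T)$ with $s-T\geq0$ and applying Egorov's theorem (Lemma~\ref{l:egorov}) to the bounded time $T$, $U(T)\Op_h(d)=\Op_h(\tilde d)U(T)+\mathcal O(h^\infty)_{\Psi^{-\infty}}$ with $\supp\tilde d\subset\varphi_T(\supp d)\subset\mathcal E_+^\circ\cap\{r>r_2\}$ by (G2). Since $s-T\leq Ch^{-1}$ and $\WFh(\Op_h(\tilde d))\subset\mathcal E_+^\circ\cap\{r>r_2\}$, Lemma~\ref{l:comparePropagator} (through~\eqref{e:forward}) replaces $U(s-T)\Op_h(\tilde d)$ by $(1-\psi_0)U_0(s-T)(1-\psi_0)\Op_h(\tilde d)$ modulo $\mathcal O(h^\infty)_{\Psi^{-\infty}}$, for a suitable $\psi_0\in\Cc(M)$ with $\supp\psi_0\subset\{r<r_2\}$. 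Combining these reductions,
\[
A_2U(s)(I-\Op_h(\chi))U(t_L)Q=A_2(1-\psi_0)U_0(s-T)(1-\psi_0)\Op_h(\tilde d)\cdot B+\mathcal O(h^\infty)_{\Psi^{-\infty}},
\]
where $B:=U(T)(I-\Op_h(\chi))U(t_L)Q$ is a product of boundedly many uniformly bounded operators. Reading $(1-\psi_0)U_0(s-T)(1-\psi_0)$ on each infinite end and using $\WFh(A_2)\subset\{r<r_2\}$ together with the fact that $\WFh(\Op_h(\tilde d))$ is a compact subset of $\mathcal E_{+,\mathbb R}^\circ\cap\{|y|>r_2\}$ whose forward free transport remains in $\{|y|>r_2\}$ (so that~\eqref{e:eo-cond} holds), Lemma~\ref{l:EuclideanOutgoing} gives $A_2(1-\psi_0)U_0(s-T)(1-\psi_0)\Op_h(\tilde d)=\mathcal O(h^\infty)_{\Psi^{-\infty}}$ uniformly in $s-T\geq0$; composing with the uniformly bounded $B$ preserves this, which together with the collapse identity completes the inductive step.

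The step I expect to be the real obstacle is the uniform control of the $\mathcal O(h^\infty)$ remainders over the $\sim h^{-1}$ compositions: a direct telescoping would sandwich an $\mathcal O(h^\infty)$ core between products of up to $L\sim h^{-1}$ cutoff--propagator pairs, whose operator norm can grow like $\exp(ch^{-1/2})$ and overwhelm the $\mathcal O(h^N)$ gain. The induction above is arranged precisely to avoid this: the inductive hypothesis collapses all interior cutoffs into the single \emph{unitary} factor $U(s)$ before any remainder is extracted, so that the long-time propagator enters only through Lemma~\ref{l:comparePropagator}, as the free propagator $U_0(s-T)$ applied to a microlocalized outgoing state, where Lemma~\ref{l:EuclideanOutgoing} supplies an estimate uniform in the large time $s-T$. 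The sharp geometric input (G1)--(G2), pinned to the value $T=\sqrt{r_2^2-r_0^2}$, is exactly what guarantees that whatever escapes the cutoff $\Op_h(\chi)$ has crossed $\{r=r_2\}$ after one further time-$T$ evolution and escapes without ever meeting $A_2$.
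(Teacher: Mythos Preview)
Your argument is correct, but it takes a different route from the paper and your last paragraph misdiagnoses what the obstacle is. The paper writes the difference as the direct telescoping sum $\sum_{\ell=1}^{L-1}B_\ell$ that you warn against, with
\[
B_\ell=A_2U(t_1)\Op_h(\chi)\cdots\Op_h(\chi)U(t_\ell)\Op_h(1-\chi)U(t_{\ell+1}+\dots+t_L)A_1,
\]
and bounds the long prefix by the sharp norm estimate $\|\Op_h(\chi)\|_{L^2\to L^2}\le 1+\mathcal O(h)$ (valid because $\chi\in S^0_h$ takes values in $[0,1]$, so sharp G\aa rding applies); hence the product of $L\le Ch^{-1}$ such factors stays uniformly bounded, not $\exp(ch^{-1/2})$. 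The paper then reduces to $\Op_h(\chi)U(t_\ell)\Op_h(1-\chi)U(t_{\ell+1}+\dots+t_L)A_1=\mathcal O(h^\infty)$, inserts the spectral cutoff $\psi_1(-h^2\Delta_g)$ (which commutes with $U$) to isolate the energy shell, conjugates by $U(t_\ell+t_{\ell+1})$ to build a compactly microlocalized operator $A$ with $\WFh(A)\subset\mathcal E_-^\circ\cap\{r>r_2\}$, and finishes with the \emph{backward} version of Lemma~\ref{l:comparePropagator} and Lemma~\ref{l:EuclideanOutgoing}. Your induction, once unrolled, produces exactly the same $\mathcal O(h^{-1})$ many error terms; what your arrangement genuinely buys is that you never need the sharp norm bound on $\Op_h(\chi)$ (the long string is always collapsed to a single unitary $U(s)$ before an error is extracted), you avoid the functional-calculus insertion $\psi_1(-h^2\Delta_g)$ by carrying the energy localization in the running operator $Q$, and you work on the \emph{forward} side $\mathcal E_+^\circ$ via Egorov for the fixed time $T$ rather than the paper's backward side. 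Two small points to tighten: the notion of ``wavefront set of $Q$'' should be made precise as microlocal vanishing on the left (i.e.\ $\Op_h(a)Q=\mathcal O(h^\infty)$ for $a$ supported off $K_0$), and when invoking Lemma~\ref{l:EuclideanOutgoing} you should first pass to non-exotic $A_2'$ with $A_2(I-A_2')=\mathcal O(h^\infty)$, since that lemma is stated for $\Psi^k_h(\mathbb R^d)$.
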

\begin{proof}
We may assume that $A_1\in\Psi^{\comp}_h(M)$, $A_2\in\Psi^0_h(M)$. Indeed, otherwise we
may take $A'_1\in\Psi^{\comp}_h(M)$,
$A'_2\in\Psi^0_h(M)$ such that
$(I-A'_1)A_1=\mathcal O(h^\infty)_{\Psi^{-\infty}}$,
$A_2(I-A'_2)=\mathcal O(h^\infty)_{\Psi^{-\infty}}$,
and $\WFh(A'_1)$, $\WFh(A'_2)$ satisfy~\eqref{e:chi-sandwich-1},
and apply the argument below with $A_1,A_2$
replaced by $A'_1,A'_2$.

We have
$$
\begin{gathered}
A_2U(t_1+\dots+t_L)A_1
-A_2 U(t_1)\Op_h(\chi) U(t_2)\cdots\Op_h(\chi) U(t_L)A_1=\sum_{\ell=1}^{L-1} B_\ell,\\
B_\ell:=A_2U(t_1)\Op_h(\chi)\cdots U(t_{\ell-1})\Op_h(\chi)U(t_\ell)\Op_h(1-\chi)U(t_{\ell+1}+\dots+t_L)A_1.
\end{gathered}
$$
Therefore it suffices to show that $B_\ell=\mathcal O(h^\infty)_{L^2\to L^2}$
uniformly in $\ell$. Since $U(t)$ is unitary and $\Op_h(\chi)$
satisfies the norm bound~\cite[Theorem~13.13]{EZB}
\begin{equation}
  \label{e:chi-bdddd}
\|\Op_h(\chi)\|_{H^s_h\to H^s_h}\leq 1+\mathcal O(h),
\end{equation}
it is enough to show the following bounds uniform in $\ell$
(in fact~\eqref{e:roo-1} is used only for $\ell=2,\dots,L-1$
and~\eqref{e:roo-2} is used only for $\ell=1$)
\begin{align}
  \label{e:roo-1}
\Op_h(\chi)U(t_\ell)\Op_h(1-\chi)U(t_{\ell+1}+\dots+t_L)A_1=\mathcal O(h^\infty)_{\Psi^{-\infty}},\\
  \label{e:roo-2}
A_2U(t_\ell)\Op_h(1-\chi)U(t_{\ell+1}+\dots+t_L)A_1=\mathcal O(h^\infty)_{\Psi^{-\infty}}.
\end{align}
We show~\eqref{e:roo-1} with the same proof giving~\eqref{e:roo-2} as well.
Take $\psi_1\in C_c^\infty(\mathbb R)$ such that
$$
\supp\psi_1\subset (1-\varepsilon_E,1+\varepsilon_E),\quad
\WFh(A_1)\cap \supp\big(1-\psi_1(|\xi|_g^2)\big)=\emptyset.
$$
We can replace $A_1$ by $\psi_1(-h^2\Delta_g)A_1$ in~\eqref{e:roo-1} since
$$
(I-\psi_1(-h^2\Delta_g))A_1=\mathcal O(h^\infty)_{\Psi^{-\infty}}.
$$
Since $U(t_{\ell+2}+\dots+t_L)$ commutes with $\psi_1(-h^2\Delta_g)$, it suffices to show that
\begin{equation}
  \label{e:hanger-1}
AU(t_{\ell+2}+\dots+t_L)A_1=\mathcal O(h^\infty)_{\Psi^{-\infty}},
\end{equation}
where
$$
A:=U(-t_\ell-t_{\ell+1})\Op_h(\chi)U(t_\ell)\Op_h(1-\chi)U(t_{\ell+1})\psi_1(-h^2\Delta_g).
$$
By Lemma~\ref{l:egorov}, we have $A\in \Psi^{\comp}_h(M)$ and
$$
\WFh(A)\subset \varphi_{-t_\ell-t_{\ell+1}}(\supp\chi)\cap \varphi_{-t_{\ell+1}}(\supp(1-\chi))\cap \{|\xi|^2_g\in (1-\varepsilon_E,1+\varepsilon_E)\}.
$$
Take $\mathbf x\in \varphi_{t_{\ell+1}}(\WFh(A))$. By~\eqref{e:chi-sandwich-2}
we have $\mathbf x\in \{r>r_0\}$
and by~\eqref{e:chi-sandwich-1} we have
$\varphi_{t_\ell}(\mathbf x)\in \{r<r_2\}$. By~\eqref{e:convexinf-1}
and since $t_\ell\geq T$
we see that $\mathbf x\in \mathcal E_-^\circ$.
Applying~\eqref{e:convexinf-1} again and using that $t_{\ell+1}\geq T$
we see that $\varphi_{-t_{\ell+1}-s}(\mathbf x)\in \mathcal E_-^\circ\cap \{r>r_2\}$
for all $s\geq 0$. Therefore
\begin{gather}
  \label{e:why-panda-1}
\WFh(A)\subset \mathcal E_-^\circ\cap \{r>r_2\} ,\\
  \label{e:why-panda-2}
\varphi_{-s}(\WFh(A))\cap \WFh(A_1)=\emptyset\quad\text{for all }s\geq 0.
\end{gather}
Denote $\tilde t_\ell:=t_{\ell+2}+\dots+t_L\in [0,Ch^{-1}]$.
By~\eqref{e:why-panda-1} we may
apply Lemma~\ref{l:comparePropagator} to get for some $\psi_0\in C_c^\infty(M;\mathbb R)$, $\supp(1-\psi_0)\subset \{r> r_0\}$
$$
U(-\tilde t_\ell)A^*=(1-\psi_0)U_0(-\tilde t_\ell)(1-\psi_0)A^*+\mathcal O(h^\infty)_{\Psi^{-\infty}}.
$$
Taking adjoints, we get
\begin{equation}
  \label{e:hanger-2}
AU(\tilde t_\ell)=A(1-\psi_0)U_0(\tilde t_\ell)(1-\psi_0)+\mathcal O(h^\infty)_{\Psi^{-\infty}}.
\end{equation}
By Lemma~\ref{l:EuclideanOutgoing} and~\eqref{e:why-panda-2} we have
\begin{equation}
  \label{e:hanger-3}
A(1-\psi_0)U_0(\tilde t_\ell)(1-\psi_0)A_1=\mathcal O(h^\infty)_{\Psi^{-\infty}}.
\end{equation}
Combining~\eqref{e:hanger-2} and~\eqref{e:hanger-3}, we
obtain~\eqref{e:hanger-1}, finishing the proof.
\end{proof}
Using Lemma~\ref{l:super-sandwich}, we also obtain the following
estimate used in~\S\ref{s:proof-of-decay}:
\begin{lemma}
  \label{l:mega-sandwich}
Assume that $A_1\in \Psi^{\comp}_h(M),A_2\in\Psi^0_h(M)$ satisfy for some $r_1>r_0$
and $\varepsilon_E>0$
\begin{equation}
  \label{e:mes-1}
\WFh(A_1)\subset \{r<r_1\}\cap \{|\xi|_g^2\in(1-\varepsilon_E,1+\varepsilon_E)\},\quad
\WFh(A_2)\subset \{r<r_1\}.
\end{equation}
Put $T_0:=\sqrt{r_1^2-r_0^2}$ and assume that $\chi'\in C_c^\infty(M)$ satisfies
\begin{equation}
  \label{e:mes-2}
\supp(1-\chi')\cap \{r\leq r_1+T_0\}=\emptyset.
\end{equation}
Fix $C_0>0$.
Then for all $t\in [T_0,C_0h^{-1}]$,
$s\in [0,C_0h^{-1}]$, and $u\in L^2(M)$ we have
$$
\|A_2U(s+t)A_1u\|_{L^2}
\leq \|A_2\|_{L^2\to L^2}\cdot\|\chi' U(t)A_1u\|_{L^2}
+\mathcal O(h^\infty)\|u\|_{L^2}.
$$
\end{lemma}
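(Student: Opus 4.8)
The plan is to split $1=\chi'+(1-\chi')$ between $U(s)$ and $U(t)$, control the $\chi'$ piece by unitarity of $U(s)$, and show that the $(1-\chi')$ piece is microlocally outgoing and escaping, so that $A_2U(s)$ annihilates it to infinite order. Writing
\[
A_2U(s+t)A_1u=A_2U(s)\chi'U(t)A_1u+A_2U(s)(1-\chi')U(t)A_1u,
\]
the first summand has $L^2$-norm at most $\|A_2\|_{L^2\to L^2}\|\chi'U(t)A_1u\|_{L^2}$ since $U(s)$ is unitary, so the whole statement reduces to the operator bound
\[
A_2U(s)(1-\chi')U(t)A_1=\mathcal O(h^\infty)_{\Psi^{-\infty}},\qquad t\in[T_0,C_0h^{-1}],\ s\in[0,C_0h^{-1}].
\]
Because $\WFh(A_1)\subset\{|\xi|_g^2\in(1-\varepsilon_E,1+\varepsilon_E)\}$ by~\eqref{e:mes-1}, the range of $U(t)A_1$ is microlocalized near the energy surface $\{|\xi|_g=1\}$, so in this error term we may insert $\psi_1(-h^2\Delta_g)$ for a fixed $\psi_1\in C_c^\infty(\mathbb R)$ equal to $1$ near $[1-\varepsilon_E,1+\varepsilon_E]$ and thereby assume in addition that $A_2\in\Psi^{\comp}_h(M)$ with $\WFh(A_2)\subset\{r<r_1\}\cap\{|\xi|_g^2\in\supp\psi_1\}$; as usual one also reduces to $M=\mathbb R^d$ by treating each end separately.

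The crux is that $(1-\chi')U(t)A_1u$ is microlocalized in a fixed strictly outgoing region inside the Euclidean ends. If $\mathbf x\in\WFh\bigl((1-\chi')U(t)A_1u\bigr)$ then $r(\mathbf x)\ge r_1+T_0$ (because $\chi'=1$ on $\{r\le r_1+T_0\}$ by~\eqref{e:mes-2}) and $\varphi_{-t}(\mathbf x)\in\WFh(A_1)\subset\{r<r_1\}$; by geodesic convexity of $\{r\le r_1\}$ (which follows from~\eqref{e:convexinf-1}) the backward geodesic through $\mathbf x$ leaves $\{r\le r_1\}$ monotonically, and evaluating the Euclidean identity $r(\varphi_\sigma(\mathbf x'))^2=r_1^2+2\sigma\langle y(\mathbf x'),\eta(\mathbf x')\rangle_{\mathbb R^d}/|\eta(\mathbf x')|+\sigma^2$ at the last exit point $\mathbf x'$, using $r(\mathbf x)\ge r_1+T_0$ and $\sqrt{r_0^2+T_0^2}=r_1$, forces $\langle y(\mathbf x),\eta(\mathbf x)\rangle_{\mathbb R^d}\ge T_0'\,|\eta(\mathbf x)|$ with $T_0':=\sqrt{(r_1+T_0)^2-r_1^2}>0$; in particular $\mathbf x\in\mathcal E_+^\circ$ (this is the role of the radius $r_1+T_0$). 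Also $\WFh(U(t)A_1u)\subset\{r\le r_1+t\}$ by unit speed in the ends. Promoting these to $\mathcal O(h^\infty)$ statements \emph{uniformly for $t$ up to $C_0h^{-1}$} is the main technical step; it is carried out by chopping $U(t)A_1$ into bounded-time factors using Lemma~\ref{l:super-sandwich} (with $r_2:=r_1$, so its time scale is $T=\sqrt{r_1^2-r_0^2}=T_0$, matching the hypothesis $t\ge T_0$, and the inserted cutoffs $\Op_h(\chi_0)$ supported in $\{r<r_1\}$) and propagating through the resulting product by Egorov's theorem (Lemma~\ref{l:egorov}), checking at each bounded step — once more via~\eqref{e:convexinf-1} — that the backward flow-out of the complementary incoming and tangent directions stays inside $\{r>r_1\}$ and hence misses $\WFh(A_1)$. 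It follows that there is $A_+$, compactly microlocalized (with $h$-dependent but polynomially bounded support, which does not affect any $\mathcal O(h^\infty)$ conclusion) and with $\WFh(A_+)\subset\mathcal E_+^\circ$, equal microlocally to the identity on the outgoing region above, such that
\[
(1-\chi')U(t)A_1=A_+(1-\chi')U(t)A_1+\mathcal O(h^\infty)_{\Psi^{-\infty}},\qquad t\in[T_0,C_0h^{-1}].
\]

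To finish, apply the forward comparison~\eqref{e:forward} of Lemma~\ref{l:comparePropagator} to $A_+$: for a suitable $\psi_0\in C_c^\infty(M)$ with $\supp\psi_0\subset\{r<r_2\}$, $r_0<r_2<r_1$, and $\supp(1-\psi_0)\subset\{r>r_0\}$, we get $U(s)A_+=(1-\psi_0)U_0(s)(1-\psi_0)A_++\mathcal O(h^\infty)_{\Psi^{-\infty}}$ uniformly in $0\le s\le C_0h^{-1}$; moreover $1-\psi_0=1$ on $\WFh(A_+)$, so $(1-\psi_0)A_+=A_++\mathcal O(h^\infty)_{\Psi^{-\infty}}$, and therefore
\[
A_2U(s)(1-\chi')U(t)A_1=A_2(1-\psi_0)\,U_0(s)\,A_+\,(1-\chi')U(t)A_1+\mathcal O(h^\infty)_{\Psi^{-\infty}}.
\]
Here $\WFh\bigl(A_2(1-\psi_0)\bigr)$ is a compact subset of $\{r_0<r<r_1\}$ near the energy surface, while every forward half-trajectory issuing from $\WFh(A_+)\subset\mathcal E_+^\circ$ has $r$ strictly increasing and never returns to $\{r<r_1\}$; a kernel estimate as in the proof of Lemma~\ref{l:EuclideanOutgoing}, using the lower bound $\langle y,\eta\rangle\ge T_0'|\eta|$ on $\WFh(A_+)$ to bound the $\eta$-gradient of the phase $\langle y-y',\eta\rangle-s|\eta|$ below by $c\langle s\rangle$, then yields $A_2(1-\psi_0)U_0(s)A_+=\mathcal O(h^\infty)_{\Psi^{-\infty}}$, completing the proof; the case $s<T_0$ is a routine bounded-time propagation-of-singularities argument, using that any geodesic joining $\{r<r_1\}$ to $\supp(1-\chi')$ must cross the Euclidean annulus $\{r_1\le r\le r_1+T_0\}$, where $|dr|_g=1$. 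The main obstacle throughout is handling propagation of singularities over the polynomially long times $t,s\le C_0h^{-1}$ — which is exactly what Lemma~\ref{l:super-sandwich} (together with Lemmas~\ref{l:comparePropagator} and~\ref{l:EuclideanOutgoing} in the ends) is built to do.
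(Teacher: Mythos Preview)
Your decomposition $A_2U(s+t)A_1=A_2U(s)\chi'U(t)A_1+A_2U(s)(1-\chi')U(t)A_1$ and the treatment of the first term are fine, but the argument for the second term has a genuine gap at the step you call ``the main technical step.'' You claim that Lemma~\ref{l:super-sandwich} lets you chop $U(t)A_1$ and thereby prove the uniform microlocalization
\[
(1-\chi')U(t)A_1=A_+(1-\chi')U(t)A_1+\mathcal O(h^\infty)_{\Psi^{-\infty}},\qquad t\in[T_0,C_0h^{-1}].
\]
But Lemma~\ref{l:super-sandwich} only inserts cutoffs when the \emph{left} operator $A_2$ has $\WFh(A_2)\subset\{r<r_2\}$; the operator you need to control, namely $(I-A_+)(1-\chi')$, lives in $\{r\ge r_1+T_0\}$ and is not even compactly supported, so that lemma does not apply. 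Without this, you have no mechanism to propagate singularities over the polynomially long time $t\le C_0h^{-1}$ --- bounded-time Egorov (Lemma~\ref{l:egorov}) cannot do it, and the geometric statement that backward trajectories from the non-outgoing part of $\{r\ge r_1+T_0\}$ miss $\{r<r_1\}$, while correct, does not by itself give an $\mathcal O(h^\infty)$ bound uniform in $t$. Relatedly, your $A_+$ necessarily has support growing like $h^{-1}$ in $r$, which puts it outside the hypotheses of Lemmas~\ref{l:comparePropagator} and~\ref{l:EuclideanOutgoing} as stated; you wave this off, but extending those kernel estimates is additional work.

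The paper sidesteps all of this by never leaving the compact region $\{r<r_1\}$: it applies Lemma~\ref{l:super-sandwich} to $A_2U(s+t)A_1$ with a cutoff $\chi$ supported in $\{r<r_1\}$, chopping \emph{both} $s$ and $t$ into pieces of size in $[T_0,2T_0]$. After dropping the $s$-factors by unitarity and $\|\chi\|\le 1+\mathcal O(h)$, one is left with $\|\chi U(t_1)\cdots\chi U(t_L)A_1u\|$, and a second application of Lemma~\ref{l:super-sandwich} (now with $\chi$ on the left, which \emph{is} in $\{r<r_1\}$) recombines this into $\|\chi U(t)A_1u\|\le\|\chi'U(t)A_1u\|$. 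The case $s\le T_0$ is then handled by bounded-time Egorov exactly as you sketch at the end. This route never needs an outgoing microlocalizer with growing support, which is why it closes.
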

\begin{proof}
We first consider the case $s\geq T_0$. Fix $\chi\in C_c^\infty(M;[0,1])$ such that
$$
\supp\chi\subset \{r<r_1\},\quad
\supp(1-\chi)\cap \{r\leq r_0\}=\emptyset.
$$
We write
$$
t=t_1+\dots+t_L,\quad
s=s_1+\dots+s_{L'},\quad
t_j,s_j\in [T_0,2T_0],\quad
L,L'\leq C_0h^{-1}.
$$
By Lemma~\ref{l:super-sandwich} (with $(r_1,T_0)$ taking the place of $(r_2,T)$) we have
$$
A_2 U(s+t)A_1=A_2 U(s_1)\chi
\cdots U(s_{L'})\chi U(t_1)\cdots \chi
U(t_L)A_1+\mathcal O(h^\infty)_{\Psi^{-\infty}}.
$$
Therefore
$$
\|A_2 U(s+t)A_1u\|_{L^2}\leq \|A_2\|_{L^2\to L^2}\cdot\|\chi U(t_1)\cdots \chi
U(t_L)A_1u\|_{L^2}+\mathcal O(h^\infty)\|u\|_{L^2}.
$$
Another application of Lemma~\ref{l:super-sandwich} gives
$$
\|\chi U(t_1)\cdots \chi
U(t_L)A_1u-\chi U(t)A_1u\|_{L^2}=\mathcal O(h^\infty)\|u\|_{L^2},
$$
finishing the proof since $\chi=\chi\chi'$.

We now consider the case $0\leq s\leq T_0$. Fix
$\psi_1\in C_c^\infty(\mathbb R;[0,1])$ such that
$\supp\psi_1\subset (0,\infty)$ and
$\supp(1-\psi_1)\cap [1-\varepsilon_E,1+\varepsilon_E]=\emptyset$.
Since $U(t)$ commutes with $\psi_1(-h^2\Delta_g)$, we have
$$
\begin{aligned}
A_2 U(s+t)A_1&=A_2 U(s+t)\psi_1(-h^2\Delta_g)A_1
+\mathcal O(h^\infty)_{\Psi^{-\infty}}\\
&=
A_2 U(s)\psi_1(-h^2\Delta_g)U(t)A_1+\mathcal O(h^\infty)_{\Psi^{-\infty}}.
\end{aligned}
$$
Therefore
$$
\|A_2U(s+t)A_1u\|_{L^2}\leq \|U(-s)A_2 U(s)\psi_1(-h^2\Delta_g)U(t)A_1u\|_{L^2}
+\mathcal O(h^\infty)\|u\|_{L^2}.
$$
By~\eqref{e:mes-1} and \eqref{e:mes-2} we have
$(T^*M\setminus 0)\cap\varphi_{-s}(\WFh(A_2))\cap \supp(1-\chi')=\emptyset$.
Therefore by Lemma~\ref{l:egorov}
$$
U(-s)A_2 U(s)\psi_1(-h^2\Delta_g)(1-\chi')=\mathcal O(h^\infty)_{\Psi^{-\infty}}.
$$
Therefore
$$
\|U(-s)A_2 U(s)\psi_1(-h^2\Delta_g)U(t)A_1u\|_{L^2}
\leq \|A_2\|_{L^2\to L^2}\cdot \|\chi'U(t)A_1u\|_{L^2}+\mathcal O(h^\infty)\|u\|_{L^2}
$$
finishing the proof.
\end{proof}

\section{Dynamical cutoff functions}
  \label{s:cutoffs}

In this section, we construct families of auxiliary cutoff functions which
localize to smaller and smaller neighborhoods of $\Gamma_\pm$ and
are the key component of the proofs of Theorems~\ref{thm:fractalWeyl} and~\ref{thm:decayOnAverage}.
These functions are defined by propagating a fixed cutoff function
for a large time.

Fix constants 
$$
0\leq \rho<2\nu <1.
$$
We propagate up to time $\rho t_e$
where $t_e$ is the Ehrenfest time from~\eqref{e:t-e-R}
in the semiclassical scaling:
\begin{equation}
  \label{e:t-e-h}
t_e={\log (1/h)\over 2\Lambda_{\max}}.
\end{equation}
Fix a cutoff function
\begin{equation}
\label{e:chiprop1}
\chi\in \Cc(T^*M\setminus 0;[0,1]),\quad
\supp(1-\chi)\cap K\cap S^*M=\emptyset.
\end{equation}
Define the following functions
living near $\Gamma_\pm$:
\begin{equation}
  \label{e:chi-pm}
\chi_t^+=\chi(\chi\circ \varphi_{-t}),\quad
\chi_t^-=\chi(\chi\circ \varphi_t),\quad
t\geq 0.
\end{equation}
By the derivative estimates for the flow $\varphi_t$ (see for instance~\cite[Lemma~C.1]{DyGu2}) we have uniformly in $t$,
\begin{equation}
  \label{e:symbols-good}
\chi_t^\pm\in S^{\comp}_{h,\nu}(T^*M),\quad
0\leq t\leq \rho t_e.
\end{equation}
By~\eqref{e:trapprop-2}, there exists $T>0$ such that
\begin{equation}
\label{e:chiprop}
\varphi_{t_1}(\supp\chi)\cap \varphi_{-t_2}(\supp\chi)\cap \supp(1-\chi)\cap S^*M=\emptyset\quad\text{for all }
t_1,t_2\geq T.
\end{equation}
This implies the following
\begin{lemma}
\label{l:stepFlow}
Let $\chi,T$ satisfy~\eqref{e:chiprop1}, \eqref{e:chiprop}. Then for all $t_0\geq T,t\geq 0$,
\begin{align}
  \label{e:stepFlow-1} 
\varphi_{t_0+T}(\supp\chi^+_t)\cap\supp(\chi-\chi^+_{t+t_0})\cap S^*M&=\emptyset,\\
  \label{e:stepFlow-2}
\varphi_{-t_0-T}(\supp\chi^-_t)\cap\supp(\chi-\chi^-_{t+t_0})\cap S^*M&=\emptyset,\\
  \label{e:stepFlow-3}
\varphi_{-t_0}(\supp\chi)\cap \supp(1-\chi)\cap \Gamma_+\cap S^*M&=\emptyset,\\
  \label{e:stepFlow-4}
\varphi_{t_0}(\supp\chi)\cap \supp(1-\chi)\cap \Gamma_-\cap S^*M&=\emptyset.
\end{align}
\end{lemma}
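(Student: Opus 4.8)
The plan is to prove each of the four displayed identities directly, by unravelling the definitions~\eqref{e:chi-pm} of $\chi^{\pm}_t$ into support inclusions and then reducing to the disjointness property~\eqref{e:chiprop} defining $T$. Two elementary observations are used throughout. First, since $\supp(1-\chi)$ is closed and disjoint from the compact set $K\cap S^*M$ (by~\eqref{e:chiprop1} and part~1 of Lemma~\ref{l:trapprop}), there is an open neighborhood $U$ of $K\cap S^*M$ on which $\chi\equiv 1$. Second, $\supp(f\circ\varphi_s)=\varphi_{-s}(\supp f)$ and $\varphi_s$ preserves $S^*M$, so from~\eqref{e:chi-pm} one has $\supp\chi^{+}_t\subset\supp\chi\cap\varphi_t(\supp\chi)$, $\supp\chi^{-}_t\subset\supp\chi\cap\varphi_{-t}(\supp\chi)$, as well as $\supp(\chi-\chi^{+}_{s})\subset\supp\chi\cap\varphi_{s}\big(\supp(1-\chi)\big)$ and $\supp(\chi-\chi^{-}_{s})\subset\supp\chi\cap\varphi_{-s}\big(\supp(1-\chi)\big)$ for $s\ge 0$.

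For~\eqref{e:stepFlow-1}, suppose some $\mathbf{y}\in S^*M$ lies in $\varphi_{t_0+T}(\supp\chi^{+}_t)\cap\supp(\chi-\chi^{+}_{t+t_0})$. The support inclusions give $\mathbf{y}\in\supp\chi$, $\varphi_{-(t_0+T)}(\mathbf{y})\in\varphi_t(\supp\chi)$, and $\mathbf{z}:=\varphi_{-(t+t_0)}(\mathbf{y})\in\supp(1-\chi)\cap S^*M$. Writing $\mathbf{y}=\varphi_{t+t_0}(\mathbf{z})$, the condition $\mathbf{y}\in\supp\chi$ gives $\mathbf{z}\in\varphi_{-(t+t_0)}(\supp\chi)$, while $\varphi_{-(t_0+T)}(\mathbf{y})=\varphi_{t-T}(\mathbf{z})\in\varphi_t(\supp\chi)$ gives $\varphi_{-T}(\mathbf{z})\in\supp\chi$, i.e.\ $\mathbf{z}\in\varphi_T(\supp\chi)$. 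Hence $\mathbf{z}\in\varphi_T(\supp\chi)\cap\varphi_{-(t+t_0)}(\supp\chi)\cap\supp(1-\chi)\cap S^*M$; since $T\ge T$ and $t+t_0\ge T$ this contradicts~\eqref{e:chiprop}. Statement~\eqref{e:stepFlow-2} is the time-reversal of the same computation: for an offending $\mathbf{y}$, the point $\mathbf{z}:=\varphi_{t+t_0}(\mathbf{y})$ lies in $\varphi_{t+t_0}(\supp\chi)\cap\varphi_{-T}(\supp\chi)\cap\supp(1-\chi)\cap S^*M$, again contradicting~\eqref{e:chiprop}.

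For~\eqref{e:stepFlow-3}, suppose $\mathbf{x}\in\varphi_{-t_0}(\supp\chi)\cap\supp(1-\chi)\cap\Gamma_+\cap S^*M$. Since $\Gamma_+$ and $S^*M$ are flow-invariant, \eqref{e:trapprop-1} gives $d(\varphi_{-s}(\mathbf{x}),K)\to 0$ as $s\to+\infty$; as $\varphi_{-s}(\mathbf{x})$ stays on $S^*M$ and $K$ is conic, $\varphi_{-s}(\mathbf{x})$ enters the neighborhood $U$ of $K\cap S^*M$ for all large $s$, so $\varphi_{-s_1}(\mathbf{x})\in U\subset\supp\chi$ for some $s_1\ge T$. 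Then $\mathbf{x}\in\varphi_{s_1}(\supp\chi)\cap\varphi_{-t_0}(\supp\chi)\cap\supp(1-\chi)\cap S^*M$ with $s_1,t_0\ge T$, contradicting~\eqref{e:chiprop}. Statement~\eqref{e:stepFlow-4} is proved identically, now flowing $\mathbf{x}\in\Gamma_-$ forward and using $d(\varphi_s(\mathbf{x}),K)\to 0$ as $s\to+\infty$ from~\eqref{e:trapprop-1}.

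I do not anticipate a serious obstacle: all four assertions are bookkeeping built on~\eqref{e:chiprop}. The one point requiring a line of justification is in~\eqref{e:stepFlow-3}--\eqref{e:stepFlow-4}: passing from convergence of the orbit of $\mathbf{x}$ to $K$ (in the relevant time direction) to eventual entry into the open set $U$ on which $\chi=1$, which uses that the orbit remains on $S^*M$ together with the conic structure of $K$ and compactness of $K\cap S^*M$.
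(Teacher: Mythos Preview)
Your proof is correct and follows essentially the same route as the paper's: for \eqref{e:stepFlow-1}--\eqref{e:stepFlow-2} you unwind the support inclusions for $\chi^\pm_t$ and reduce to~\eqref{e:chiprop} with $(t_1,t_2)=(T,t+t_0)$ or $(t+t_0,T)$, and for \eqref{e:stepFlow-3}--\eqref{e:stepFlow-4} you use~\eqref{e:trapprop-1} to push the orbit into $\supp\chi$ and again invoke~\eqref{e:chiprop}. The only difference is cosmetic: the paper states the set identity to be contradicted first and then applies $\varphi_{\pm(t+t_0)}$ to~\eqref{e:chiprop}, while you trace an explicit point $\mathbf z$; the content is identical.
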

\begin{proof}
For~\eqref{e:stepFlow-1} it is enough to show that
$$
\varphi_{t+t_0+T}(\supp\chi)\cap\supp \chi\cap \varphi_{t+t_0}(\supp(1-\chi))\cap S^*M=\emptyset
$$
which follows immediately by applying $\varphi_{t+t_0}$ to~\eqref{e:chiprop} with $t_1=T,t_2=t+t_0$.

For~\eqref{e:stepFlow-2} it is enough to show that
$$
\varphi_{-t-t_0-T}(\supp\chi)\cap \supp\chi\cap  \varphi_{-t-t_0}(\supp(1-\chi))\cap S^*M=\emptyset
$$
which follows immediately by applying $\varphi_{-t-t_0}$ to~\eqref{e:chiprop} with $t_1=t+t_0,t_2=T$.

To show~\eqref{e:stepFlow-3}, choose $(x,\xi)$ in the left-hand side of this equation.
Since $(x,\xi)\in\Gamma_+$, by~\eqref{e:trapprop-1} we have $(x,\xi)\in\varphi_{t_1}(\supp\chi)$
for all $t_1\geq 0$ large enough depending on $(x,\xi)$.
Then
$$
(x,\xi)\in \varphi_{-t_0}(\supp\chi)\cap \varphi_{t_1}(\supp\chi)\cap\supp(1-\chi)\cap S^*M
$$
which is impossible by~\eqref{e:chiprop} with $t_2=t_0$, as soon as $t_1\geq T$.

Finally, to show~\eqref{e:stepFlow-4}, choose $(x,\xi)$ in the left-hand side of this equation. Since $(x,\xi)\in \Gamma_-$, by~\eqref{e:trapprop-1} we have $(x,\xi)\in \varphi_{-t_2}(\supp \chi)$ for all $t_2\geq 0$ large enough depending on $(x,\xi)$. Then 
$$
(x,\xi)\in \varphi_{t_0}(\supp \chi)\cap \varphi_{-t_2}(\supp \chi)\cap \supp (1-\chi)\cap S^*M
$$
which is impossible by~\eqref{e:chiprop} with $t_1=t_0$ as soon as $t_2\geq T$.
\end{proof}

\section{Proof of the Weyl upper bound}
  \label{s:weyl}

In this section, we prove Theorem~\ref{thm:fractalWeyl}, following
the method of~\cite{DyBo}. We use the function~$\chi$ and the constant $T$
satisfying~\eqref{e:chiprop1}, \eqref{e:chiprop}. We also assume that
$\chi$ is chosen to be homogeneous of degree 0 near~$S^*M$
and $\supp\chi\subset \{r<r_0\}\cap \{|\xi|_g\leq 2\}$.
We fix $h$-dependent
\begin{equation}
  \label{e:rho-fix}
\rho,\rho'\in [0,1),\quad
{1\over 2}\max(\rho,\rho')<\nu<{1\over 2},\quad
\rho t_e,\rho' t_e\geq C_0,
\end{equation}
with $C_0$ a large constant,
$\rho,\rho'$ chosen at the end of the proof,
and $\nu$ independent of $h$,
and define the following functions using~\eqref{e:t-e-h} and~\eqref{e:chi-pm}:
$$
\chi_+:=\chi^+_{\rho t_e},\quad
\chi_-:=\chi^-_{\rho' t_e},
$$
which both lie in $S^{\comp}_{h,\nu}(T^*M)$ by~\eqref{e:symbols-good}. We also use
a function
\begin{equation}
  \label{e:chi-E}
\chi_E\in \mathscr S(\mathbb R),\quad
\chi_E(0)=1,\quad
\supp\widehat\chi_E\subset (-1,1).
\end{equation}

\subsection{Approximate inverse}
  \label{s:weyl-inverse}

We first construct an approximate inverse for the complex scaled operator $P_\theta-\omega^2$
(see~\S\ref{s:resolvent-infinity}),
arguing similarly to the proof of~\cite[Proposition~2.1]{DyBo} and using the results of~\S\ref{s:cutoffs}.
See~\eqref{e:big-Omega} for the definitions of $\widetilde\Omega,\tilde\beta$.
\begin{lemma}
\label{l:ultimateInverse}
Fix $\varepsilon_0>0$. Then there exist $h$-dependent
families of operators holomorphic in $\omega\in\widetilde\Omega$
\begin{align}
  \label{e:ultimZ}
\mathcal Z(\omega):L^2(M)\to H^2(M),&\qquad
\|\mathcal Z(\omega)\|_{L^2(M)\to H^2_h(M)}\leq Ch^{-1}
e^{(\tilde\beta+\varepsilon_0)(\rho+\rho')t_e},\\
  \label{e:ultimJ}
\mathcal J(\omega):H^2(M)\to H^{2}(M),&\qquad
\|\mathcal J(\omega)\|_{H^{2}_h(M)\to H^{2}_h(M)}\leq C e^{(-h^{-1}\Im\omega+\varepsilon_0)\rho' t_e},
\end{align}
such that for all $\omega\in\widetilde\Omega$
and the constant $C_0$ in~\eqref{e:rho-fix} chosen large enough, we have on $H^2(M)$
\begin{equation}
  \label{e:ultima}
I=\mathcal Z(\omega)(P_\theta-\omega^2)+\mathcal J(\omega)\Op_h(\chi_-)\Op_h(\chi_+)\chi_E\Big({-h^2\Delta_g-\omega^2\over h}\Big)+\mathcal R(\omega),
\end{equation}
and the remainder $\mathcal R(\omega)$ is $\mathcal O(h^\infty)_{\Psi^{-\infty}}$.
\end{lemma}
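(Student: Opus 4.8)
The plan is to follow the two‑step parametrix construction of \cite[Proposition~2.1]{DyBo}: first reduce the identity, by propagating the half‑wave group along the flow for time $\rho' t_e$, to an operator microsupported near $\Gamma_-$ together with the thin spectral cutoff $B(\omega):=\chi_E\big((-h^2\Delta_g-\omega^2)/h\big)$, and then reduce that operator further, by a non‑trapping parametrix step, to one microsupported near $\Gamma_-\cap\Gamma_+=K$ (producing the factor $\Op_h(\chi_+)$).

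\emph{First step.} I would produce an identity
\begin{equation*}
I=Z_1(\omega)(P_\theta-\omega^2)+\mathcal J(\omega)\,\Op_h(\chi_-)B(\omega)+\mathcal O(h^\infty)_{\Psi^{-\infty}}
\end{equation*}
by iterating, $L_1:=\rho' t_e/t_1$ times (with $t_1$ fixed, $\ge T+1$ and small enough that $\varphi_{-t}(\supp\chi)\subset\{r<r_1\}$ for $t\in[0,t_1]$; here $L_1\le C\log(1/h)$ since $\rho' t_e\ge C_0$), a single backward parametrix step modelled on Lemmas~\ref{l:basic-parametrix}--\ref{l:iterated-parametrix} but with the unitary propagator $U(-t_1)$ in place of $U(t_1)$, checking the dynamical separation at each step from \eqref{e:stepFlow-2} and \eqref{e:stepFlow-4} of Lemma~\ref{l:stepFlow}, with the cutoffs $\chi^-_{jt_1}\in S^{\comp}_{h,\nu}$ (by \eqref{e:symbols-good}) interpolating between $\chi^2$ and $\chi_-=\chi^-_{\rho' t_e}$, and replacing $\Op_h(\chi^2)$ by the identity at the end since $\chi^2\equiv1$ near $K\cap S^*M$ by \eqref{e:chiprop1}. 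This is the main obstacle, for two reasons. First, the complex‑scaled resolvent $\mathcal R_Q(\omega)$ is only semiclassically outgoing (Lemma~\ref{l:trapped-parametrix}), so it cannot be used in the backward step; the natural substitute, obtained from $hD_t\big(e^{i\omega t/h}U(-t)\big)=e^{i\omega t/h}U(-t)\big(h\sqrt{-\Delta_g}+\omega\big)$, involves $\big(h\sqrt{-\Delta_g}-\omega\big)^{-1}$, singular on the energy surface, which is exactly why the width‑$h$ cutoff $B(\omega)$ must enter here; one then controls the resulting operators using Lemma~\ref{l:h-functional-calculus} (and the fact that $B(\omega)$ commutes with $U(-t)$ and is $\mathcal O(h^\infty)$ microlocally off $\{|\xi|_g=|\omega|\}$), together with the bound $\big\|e^{i\omega t/h}U(-t)B(\omega)\big\|_{L^2\to L^2}\le Ce^{-t\Im\omega/h}$ valid for all $t\ge0$. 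Second, the exponential factors $e^{i\omega t/h}$ accumulated by the backward propagation must be routed into $\mathcal J$ rather than into the $(P_\theta-\omega^2)$‑term; with this normalization $\mathcal J(\omega)$ is a product of $L_1$ factors of norm $\le1+o(1)$ times an overall $e^{i\omega L_1t_1/h}$, so $\|\mathcal J(\omega)\|_{H^2_h\to H^2_h}\le Ce^{(-h^{-1}\Im\omega+\varepsilon_0)\rho' t_e}$ after choosing the per‑step error $\varepsilon_1$ small, while $\|Z_1(\omega)\|_{H^s_h\to H^{s+2}_h}\le Ch^{-1}e^{(\tilde\beta+\varepsilon_0)\rho' t_e}$, using $-h^{-1}\Im\omega\le\tilde\beta$ on $\widetilde\Omega$.

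\emph{Second step.} Writing $\Op_h(\chi_-)(I-\Op_h(\chi_+))=\Op_h\big(\chi_-(1-\chi_+)\big)+E$ with $E\in h^{1-2\nu}\Psi^{\comp}_{h,\nu}(M)$ microsupported in $\supp\chi_-\cap\supp\chi_+$, one checks from \eqref{e:stepFlow-3} of Lemma~\ref{l:stepFlow} that $\chi_-(1-\chi_+)$ vanishes on $\Gamma_+\cap S^*M$: every point of $\supp(1-\chi)\cap\Gamma_+\cap S^*M$ leaves $\supp\chi$ under $\varphi_t$ for $t\ge T$, so (as $\rho t_e,\rho' t_e\ge T$) $\chi_-$ vanishes there, while $1-\chi_+$ vanishes on the rest of $\Gamma_+\cap S^*M$; hence $\WFh\big(\Op_h(\chi_-(1-\chi_+))\big)\cap\Gamma_+\cap S^*M=\emptyset$ and Lemma~\ref{l:infinity-A1} (for a suitable $Q$) gives $\Op_h\big(\chi_-(1-\chi_+)\big)=\big(\mathcal O(h^{-1})\big)(P_\theta-\omega^2)+\mathcal O(h^\infty)_{\Psi^{-\infty}}$. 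The remaining errors — the $h^{1-2\nu}$ term $E$ near $K$, and the term $\Op_h(\chi_-)\Op_h(\chi_+)(I-B(\omega))$, which one rewrites using $1-\chi_E(s)=sg(s)$ ($g$ bounded) as $h^{-1}\Op_h(\chi_-)\Op_h(\chi_+)(-h^2\Delta_g-\omega^2)g\big((-h^2\Delta_g-\omega^2)/h\big)$ and then, using that $\Op_h(\chi_\pm)$ is microsupported in $\{r<r_0\}$ to trade $-h^2\Delta_g-\omega^2$ for $P_\theta-\omega^2$ modulo $\mathcal O(h^\infty)$ and ellipticity of $P_\theta-\omega^2$ off $S^*M$ to treat the part of $g$ microsupported away from $\{|\xi|_g=|\omega|\}$ — are absorbed into the $(P_\theta-\omega^2)$‑term, the $E$ term by a geometric iteration in $h^{1-2\nu}$. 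This yields $\Op_h(\chi_-)B(\omega)=Z_2(\omega)(P_\theta-\omega^2)+\Op_h(\chi_-)\Op_h(\chi_+)B(\omega)+\mathcal O(h^\infty)_{\Psi^{-\infty}}$ with $\|Z_2(\omega)\|_{H^s_h\to H^{s+2}_h}\le Ch^{-1}$ by \eqref{e:trapped-parametrix-est}.

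\emph{Combining.} Substituting the second identity into the first gives \eqref{e:ultima} with $\mathcal R(\omega)=\mathcal O(h^\infty)_{\Psi^{-\infty}}$, with $\mathcal J(\omega)$ exactly as in the first step, so that \eqref{e:ultimJ} holds, and with $\mathcal Z(\omega)=Z_1(\omega)+\mathcal J(\omega)Z_2(\omega)$; since $\|\mathcal J(\omega)\|\le Ce^{(\tilde\beta+\varepsilon_0)\rho' t_e}$ on $\widetilde\Omega$ and $\|Z_2(\omega)\|\le Ch^{-1}$, one gets $\|\mathcal Z(\omega)\|_{L^2\to H^2_h}\le Ch^{-1}e^{(\tilde\beta+\varepsilon_0)\rho' t_e}\le Ch^{-1}e^{(\tilde\beta+\varepsilon_0)(\rho+\rho')t_e}$, which is \eqref{e:ultimZ}; holomorphy of $\mathcal Z,\mathcal J$ in $\omega\in\widetilde\Omega$ is inherited at each stage from Lemmas~\ref{l:trapped-parametrix}, \ref{l:basic-parametrix}, \ref{l:infinity-A1}. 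The point I expect to require the most care is the first step: arranging the $e^{i\omega t/h}$ normalization of the backward half‑wave propagation and the insertion of $B(\omega)$ so that the resolvent‑producing remainders genuinely factor through $P_\theta-\omega^2$ with only an $h^{-1}$ loss, while the exponentially large factors are confined to $\mathcal J$ and weighted by $\rho' t_e$ alone.
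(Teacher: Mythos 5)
Your first step is a (workable, if more roundabout) variant of the paper's: the paper reduces $I$ to $J_-(\omega)\Op_h(\chi_-)$ using only \emph{forward} propagation via Lemmas~\ref{l:basic-parametrix} and~\ref{l:iterated-parametrix}, where the factor $(h\sqrt{-\Delta_g}+\omega)^{-1}$ is elliptic and no spectral cutoff is needed until a final bounded-time step inserting $Z_E(\omega)=h^{-1}\psi_E\big((-h^2\Delta_g-\omega^2)/h\big)$, $\psi_E(\lambda)=(1-\chi_E(\lambda))/\lambda$. The fatal problem is your second step. Lemma~\ref{l:infinity-A1} requires $\WFh\big(\Op_h(\chi_-(1-\chi_+))\big)\cap\Gamma_+\cap S^*M=\emptyset$, which is a statement about an $h$-independent neighborhood, not about pointwise vanishing of the symbol on $\Gamma_+\cap S^*M$. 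Your argument only gives the latter, and the former is false in general: the support of $\chi_-(1-\chi_+)$ creeps up to $K\cap S^*M$ as $h\to 0$. For instance, for a hyperbolic trapped set take $x_h\in\Gamma_-\cap S^*M$ at distance $e^{-\Lambda_{\max}\rho t_e/2}$ from $K$; then $\chi_-(x_h)=1$ (the forward orbit stays in $\{\chi=1\}$ for all time), while the backward orbit leaves $\supp\chi$ after time $\sim\rho t_e/2$, so $\chi_+(x_h)=0$; hence $\chi_-(1-\chi_+)=1$ at points converging to $K\subset\Gamma_+$, and the wavefront set does meet $\Gamma_+\cap S^*M$, so Lemma~\ref{l:infinity-A1} does not apply. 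Likewise there is no mechanism to absorb the error $E\in h^{1-2\nu}\Psi^{\comp}_{h,\nu}$, which is microsupported \emph{near $K$}, into the $(P_\theta-\omega^2)$-term by a ``geometric iteration'': near the trapped set neither ellipticity nor the semiclassically outgoing parametrix of Lemma~\ref{l:trapped-parametrix} is available, and producing such an inverse there is exactly what the lemma is designed to avoid.

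This gap is not a technicality. Passing from $\Op_h(\chi_-)$ to $\Op_h(\chi_-)\Op_h(\chi_+)$ genuinely requires propagating for time $\sim\rho t_e$: the paper proves $\Op_h(\chi)=Z_+(\omega)(P_\theta-\omega^2)+\Op_h(\chi_+)+\mathcal O(h^\infty)_{\Psi^{-\infty}}$ by applying Lemma~\ref{l:iterated-parametrix} to the family $\chi-\chi^+_{t_0(L+1-j)}$, invoking Lemma~\ref{l:infinity-A1} only for the final, $h$-independent symbol $\chi-\chi^+_{t_0}$ (whose support avoids $\Gamma_+\cap S^*M$ by Lemma~\ref{l:stepFlow}), and this propagation costs a factor $\exp\big((1+\varepsilon_1)\tilde\beta\rho t_e\big)$ in $Z_+$. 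That cost is precisely why~\eqref{e:ultimZ} carries the exponent $(\rho+\rho')t_e$ rather than $\rho' t_e$. Your claimed $\|Z_2(\omega)\|\leq Ch^{-1}$, and the resulting bound $\|\mathcal Z(\omega)\|\lesssim h^{-1}e^{(\tilde\beta+\varepsilon_0)\rho' t_e}$, would be strictly stronger than the lemma and cannot be obtained by a purely ``outgoing/elliptic'' argument at the trapped set; the missing ingredient is exactly the iterated forward-propagation step localizing to $\Gamma_+$.
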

\begin{proof}
Throughout the proof we will assume that $\omega\in\widetilde\Omega$;
the operators we construct are holomorphic in~$\omega$.
Fix $\varepsilon_1>0$ to be chosen at the end of the proof.
We first show that
\begin{gather}
  \label{e:monstra-1}
I=Z_-(\omega)(P_\theta-\omega^2)+J_-(\omega)\Op_h(\chi_-)+\mathcal O(h^\infty)_{\Psi^{-\infty}},\\
  \label{e:monstrz-1}
\|Z_-(\omega)\|_{H^s_h\to H^{s+2}_h}\leq C_{s,\varepsilon_1} h^{-1}\exp\big((1
+\varepsilon_1)\tilde\beta\rho' t_e\big),\\
  \label{e:monstrj-1}
\|J_-(\omega)\|_{H^{-N}_h\to H^N_h}\leq C_{N,\varepsilon_1}\exp\Big(
\Big(-{\Im\omega\over h}+\varepsilon_1\tilde\beta\Big)\rho't_e\Big).
\end{gather}
For that, fix $t_0$ bounded independently of $h$ and such that
$$
t_0>{2T\over \varepsilon_1},\quad
L:={\rho't_e\over t_0}\in\mathbb N.
$$
We apply Lemma~\ref{l:iterated-parametrix} to
$$
a_j=\chi^-_{t_0 j},\quad
t_1:=t_0+T.
$$
Indeed, we have $a_0=\chi^2=1$ in an $h$-independent neighborhood of $K\cap S^*M$
and $a_L=\chi_-$. To verify~\eqref{e:dyncon}, we first write by~\eqref{e:stepFlow-2}
with $t=t_0j$,
\begin{equation}
  \label{e:i-still-hate-topology}
\varphi_{-t_1}(\supp a_j)\cap \supp (\chi-a_{j+1})\cap S^*M=\emptyset.
\end{equation}
On the other hand, by~\eqref{e:stepFlow-3}
\begin{equation}
  \label{e:i-hate-topology}
\supp a_j\subset\supp \chi,\qquad
\varphi_{-t_1}(\supp\chi)\cap \supp(1-\chi)\cap \Gamma_+\cap S^*M=\emptyset.
\end{equation}
Since $\chi$ is independent
of $h$, $a_j,\chi$ are homogeneous of order 0 near $S^*M$, and
$$
\supp(1-a_{j+1})\ \subset\ \supp(1-\chi)\cup\supp(\chi-a_{j+1}),
$$
we see that $\varphi_{-t_1}(\supp a_j)\cap \supp(1-a_{j+1})$ is contained
in an $h$-independent compact set not intersecting $\Gamma_+\cap S^*M$ and
\eqref{e:dyncon} follows by making $V$ the complement of this compact set.
Finally, to satisfy~\eqref{e:dyncon-2}, we take $r_1$ large enough depending
on $t_0$. Now Lemma~\ref{l:iterated-parametrix} applies and
gives~\eqref{e:monstra-1}--\eqref{e:monstrj-1}.

We next show that
\begin{gather}
  \label{e:monstra-2}
\Op_h(\chi)=Z_+(\omega)(P_\theta-\omega^2)+\Op_h(\chi_+)+\mathcal O(h^\infty)_{\Psi^{-\infty}},\\
  \label{e:monstrz-2}
\|Z_+(\omega)\|_{H^s_h\to H^{s+2}_h}\leq C_{s,\varepsilon_1}h^{-1}\exp\big((1
+\varepsilon_1)\tilde\beta\rho t_e\big).
\end{gather}
For that, we fix $t_0$ bounded independently of $h$ and such that
$$
t_0>{2T\over \varepsilon_1},\quad
L:={\rho t_e\over t_0}-1\in\mathbb N.
$$
We apply Lemma~\ref{l:iterated-parametrix} to
$$
a_j=\chi-\chi^+_{t_0(L+1-j)},\quad
t_1:=t_0+T.
$$
Then $a_0=\chi-\chi_+$ and $a_L=\chi-\chi^+_{t_0}$. By~\eqref{e:stepFlow-3},
we have $\supp a_L\cap \Gamma_+\cap S^*M=\emptyset$; since
$a_L$ is independent of $h$, by Lemma~\ref{l:infinity-A1} we have for an appropriate choice of $Q$
\begin{equation}
  \label{e:addihelp-1}
\Op_h(a_L)=\Op_h(a_L)\mathcal R_Q(\omega)(P_\theta-\omega^2)+\mathcal O(h^\infty)_{\Psi^{-\infty}}.
\end{equation}
To verify~\eqref{e:dyncon}, \eqref{e:dyncon-2} we argue as in the proof of \eqref{e:monstra-1}--\eqref{e:monstrj-1}
above, using~\eqref{e:i-still-hate-topology} (which follows from~\eqref{e:stepFlow-1} with $t=t_0(L-j)$)
and \eqref{e:i-hate-topology}. Now Lemma~\ref{l:iterated-parametrix} applies and, combined with~\eqref{e:addihelp-1},
gives~\eqref{e:monstra-2}, \eqref{e:monstrz-2}.

We also have
\begin{equation}
\label{e:monstra-3}
  \Op_h(\chi_-)=Z_\chi(\omega)(P_\theta-\omega^2)+\Op_h(\chi_-)\Op_h(\chi)+\mathcal O(h^\infty)_{\Psi^{-\infty}},\quad
\|Z_\chi(\omega)\|_{H^s_h\to H^{s+2}_h}\leq C_sh^{-1}.
\end{equation}
Indeed, choose $C_0$ in~\eqref{e:rho-fix} large enough
so that $C_0\geq 2T$. Similarly to~\eqref{e:chiprop} we have
for some $\varepsilon'>0$
$$
\supp(\chi_-)\cap \supp(1-\chi)\cap \{1-\varepsilon'\leq |\xi|_g\leq 1+\varepsilon'\}\subset
\varphi_{-T}(\supp\chi)\cap \supp (1-\chi).
$$
The right-hand side is a compact set which by~\eqref{e:stepFlow-3}
does not intersect $\Gamma_+\cap S^*M$.
Now \eqref{e:monstra-3} follows by Lemma~\ref{l:infinity-A1}
applied to the operator $\Op_h(\chi_-)\Op_h(1-\chi)$.

Finally, put
$$
Z_E(\omega):=h^{-1}\psi_E\Big({-h^2\Delta_g-\omega^2\over h}\Big),\quad
\psi_E(\lambda)={1-\chi_E(\lambda)\over\lambda}.
$$
It follows from~\eqref{e:chi-E} that $\supp\widehat \psi_E\subset (-1,1)$, in particular $\psi_E$ is entire and
$Z_E$ can be defined.
Then
\begin{equation}
  \label{e:monstra-4.0}
I=Z_E(\omega)(-h^2\Delta_g-\omega^2)+\chi_E\Big({-h^2\Delta_g-\omega^2\over h}\Big),\quad
\|Z_E(\omega)\|_{H^{s}_h\to H^s_h}\leq C_sh^{-1}.
\end{equation}
By~\eqref{e:h-pseudolocality} and the fact that $P_\theta=-h^2\Delta_g$ on $\{r<r_1\}$,
we see that as long as $r_1>r_0+10$, we have
\begin{equation}
  \label{e:monstra-4}
\Op_h(\chi_+)=\Op_h(\chi_+)Z_E(\omega)(P_\theta-\omega^2)+\Op_h(\chi_+)\chi_E\Big({-h^2\Delta_g-\omega^2\over h}\Big)
+\mathcal O(h^\infty)_{\Psi^{-\infty}}.
\end{equation}
Combining~\eqref{e:monstra-1}, \eqref{e:monstra-3}, \eqref{e:monstra-2}, 
\eqref{e:monstra-4}, we obtain~\eqref{e:ultima} with
$$
\begin{aligned}
\mathcal Z(\omega)&=Z_-(\omega)+J_-(\omega)\Big(
Z_\chi(\omega)+\Op_h(\chi_-)\big(
Z_+(\omega)+\Op_h(\chi_+)Z_E(\omega)
\big)
\Big),\\
\mathcal J(\omega)&=J_-(\omega),
\end{aligned}
$$
and~\eqref{e:ultimZ}, \eqref{e:ultimJ} follow from \eqref{e:monstrz-1},
\eqref{e:monstrj-1}, \eqref{e:monstrz-2}, \eqref{e:monstra-3}, \eqref{e:monstra-4.0}
as long as we choose $\varepsilon_1<\varepsilon_0/\beta$.
\end{proof}

\subsection{Proof of Theorem~\texorpdfstring{\ref{thm:fractalWeyl}}{2}}

Fix $\varepsilon_0>0$ and let
$$
\mathcal A(\omega):=\mathcal J(\omega)\Op_h(\chi_-)\Op_h(\chi_+)\chi_E\Big({-h^2\Delta_g-\omega^2\over h}\Big)+\mathcal R(\omega)
$$
be the operator featured in Lemma~\ref{l:ultimateInverse}. Then
$\mathcal A(\omega)$ is a Hilbert--Schmidt operator on $H^2_h(M)$ and its
Hilbert--Schmidt norm is estimated by~\eqref{e:HS-estimate} and~\eqref{e:ultimJ}:
\begin{equation}
  \label{e:A-HS-norm}
\begin{aligned}
\|\mathcal A(\omega)\|^2_{\HS}&\leq \|\mathcal J(\omega)\|^2_{H^2_h\to H^2_h}
\cdot \Big\|\Op_h(\chi_-)\Op_h(\chi_+)\chi_E\Big({-h^2\Delta_g-\omega^2\over h}\Big)\Big\|^2_{\HS}
+\mathcal O(h^\infty)
\\&\leq
C h^{1-d}e^{2(-h^{-1}\Im\omega+\varepsilon_0)\rho' t_e}\cdot
\mu_L(S^*M\cap \supp\chi_+\cap\supp\chi_-)+\mathcal O(h^\infty)
\\&\leq
C h^{1-d}e^{2(-h^{-1}\Im\omega+\varepsilon_0)\rho' t_e}\cdot
\mathcal V\big((\rho+\rho')t_e\big)+\mathcal O(h^\infty)
=:V_{\rho,\rho',\varepsilon_0,h}(-h^{-1}\Im \omega)
\end{aligned}
\end{equation}
where we use~\eqref{e:V-t} and the fact that
$$
\supp\chi_+\cap \supp\chi_-
\ \subset\ \varphi_{\rho t_e}\big(\mathcal T((\rho+\rho')t_e)\big).
$$
Consider the Fredholm determinant
$$
F(\omega)=\det(I-\mathcal A(\omega)^2),\quad
\omega\in\widetilde\Omega.
$$
We have by~\eqref{e:A-HS-norm}
\begin{equation}
  \label{e:F-upper}
|F(\omega)|\leq \exp\big(\|\mathcal A(\omega)^2\|_{\tr}\big)
\leq \exp\big(\|\mathcal A(\omega)\|_{\HS}^2\big)
\leq \exp\big(V_{\rho,\rho',\varepsilon_0,h}(\tilde\beta)\big)\quad\text{for all }\omega\in\widetilde\Omega.
\end{equation}
On the other hand, if we put $\omega_0:=1+ih\in\widetilde\Omega$, then
by~\eqref{e:ultimJ} the norm $\|\mathcal A(\omega_0)\|_{H^2_h\to H^2_h}$
is bounded above by $1\over 2$ as long as
the constant $C_0$ in~\eqref{e:rho-fix} is large enough.
Therefore,
we have $\|(I-\mathcal A(\omega_0))^{-1}\|_{H^2_h\to H^2_h}\leq 2$ and thus
\begin{equation}
  \label{e:F-lower}
\begin{aligned}
|F(\omega_0)|^{-1}&=|\det(I-\mathcal A(\omega_0)^2)^{-1}|
=\big|\det \big(I+\mathcal A(\omega_0)^2(I-\mathcal A(\omega_0)^2)^{-1}\big)\big|\\
&\leq \exp\big(\|\mathcal A(\omega_0)^2(I-\mathcal A(\omega_0)^2)^{-1}\|_{\tr}\big)
\leq \exp\big(2\|\mathcal A(\omega_0)\|_{\HS}^2\big)
\leq \exp\big(2V_{\rho,\rho',\varepsilon_0,h}(\tilde\beta)\big).
\end{aligned}
\end{equation}
By~\eqref{e:ultima} we have
$$
(P_\theta-\omega^2)^{-1}=\big(I-\mathcal A(\omega)^2\big)^{-1}\big(I+\mathcal A(\omega)\big)\mathcal Z(\omega).
$$
Therefore, the poles of $(P_\theta-\omega^2)^{-1}$ in $\widetilde\Omega$
are contained in the set of poles of $(I-\mathcal A(\omega)^2)^{-1}$,
that is in the set of zeroes of $F(\omega)$, counting with multiplicity.
(The multiplicities are handled using Gohberg--Sigal theory,
see for example~\cite[\S C.4]{ZwScat}.) By~\eqref{e:F-upper},
\eqref{e:F-lower}, Jensen's bound on the number of zeroes of $F(\omega)$
(see for instance~\cite[Lemma~4.4]{DyatlovJin}; we dilate
the regions~\eqref{e:h-NRB-region}, \eqref{e:big-Omega}
by $h^{-1}$), and the relation
of the poles of $(P_\theta-\omega^2)^{-1}$ with resonances of $\Delta_g$,
we see that the bound
\begin{equation}
  \label{e:Weyl-basically-proved}
\mathcal N(R,\beta)\leq C R^{d-1}\exp\big(2(\tilde\beta+\varepsilon_0)\rho't_e(R)\big)
\cdot\mathcal V\big((\rho+\rho')t_e(R)\big)+\mathcal O(R^{-\infty})
\end{equation}
holds for all $\rho,\rho'\in [0,1)$ satisfying~\eqref{e:rho-fix}, $\varepsilon_0>0$, and $\tilde\beta>\beta$,
with $t_e(R)$ defined in~\eqref{e:t-e-R};
here the constant $C$ depends on $\tilde\beta$.
We assume that $K\cap S^*M\neq\emptyset$, since otherwise there is a resonance
free strip of arbitrarily large size (see for instance~\cite[Theorem~6.9]{ZwScat}).
Then by~\eqref{e:volume-nonzero}, we may remove the $\mathcal O(R^{-\infty})$ remainder in~\eqref{e:Weyl-basically-proved}.

Now, put $\rho':=C_0/t_e(R)$, where $C_0$ is the constant in~\eqref{e:rho-fix},
and $\rho:=1-\varepsilon_0$, $\tilde\beta:=\beta+\varepsilon_0$. Then~\eqref{e:Weyl-basically-proved} implies
(using~\eqref{e:volume-decreasing})
\begin{equation}
  \label{e:WBP-1}
  \mathcal N(R,\beta)\leq CR^{d-1}\cdot\mathcal V\big((1-\varepsilon_0)t_e(R)\big).
\end{equation}
If we instead put $\rho:=\rho':=1-2\beta^{-1}\varepsilon_0$, $\tilde\beta:=\beta+\varepsilon_0$,
then~\eqref{e:Weyl-basically-proved} implies
\begin{equation}
  \label{e:WBP-2}
  \mathcal N(R,\beta)\leq CR^{d-1}\exp\big(2\beta t_e(R)\big)
\cdot\mathcal V\big(2(1-2\beta^{-1}\varepsilon_0)t_e(R)\big).
\end{equation}
Choosing $\varepsilon_0$ small enough, we see that~\eqref{e:WBP-1}
and~\eqref{e:WBP-2} imply the bound~\eqref{e:fractalWeyl}, finishing
the proof of Theorem~\ref{thm:fractalWeyl}.

\section{Proof of wave decay on average}
  \label{s:wave}
  
\subsection{Hilbert--Schmidt bound}
  \label{s:wave-hs-bound}

We first use the results of~\S\ref{s:reduce-wave} to obtain a Hilbert--Schmidt
bound for the wave propagator. Assume that $\chi\in S^0_h(T^*M;[0,1])$
satisfies for some $r_2>r_0$ and $\varepsilon_E>0$,
$$
\supp\chi\subset \{r<r_2\},\quad
\supp(1-\chi)\cap \{|\xi|_g^2\in [1-\varepsilon_E,1+\varepsilon_E]\}\cap \{r\leq r_0\}=\emptyset.
$$
Put $T:= \sqrt{r_2^2-r_0^2}$. By~\eqref{e:convexinf-1} the following
stronger version of~\eqref{e:chiprop} holds:
\begin{equation}
  \label{e:extra-convex}
\varphi_{t_1}(\supp\chi)\cap \varphi_{-t_2}(\supp\chi)\cap \supp(1-\chi)\cap \{|\xi|_g^2\in [1-\varepsilon_E,1+\varepsilon_E]\}=\emptyset\quad\text{for all }t_1,t_2\geq T.
\end{equation}
Take an energy cutoff function $\psi_2\in C_c^\infty(\mathbb R)$ such that
\begin{equation}
\label{e:psi2}
\supp\psi_2\subset (1-\varepsilon_E,1+\varepsilon_E).
\end{equation}
Fix constants $0\leq \rho<2\nu<1$ and denote by $t_e$ the Ehrenfest time, see~\eqref{e:t-e-h}.
\begin{lemma}
\label{l:propLong}
Fix $\varepsilon_0\in (0,1)$. Then for each $t\in [5\varepsilon_0^{-1}T,\rho t_e]$,
\begin{equation}
  \label{e:propLong}
\|\Op_h(\chi^2)U(2t)\psi_2(-h^2\Delta_g)\Op_h(\chi^2)\|^2_{\HS}
\leq Ch^{-d}\mathcal V\big(2(1-\varepsilon_0)t\big)+\mathcal O(h^\infty).
\end{equation}
\end{lemma}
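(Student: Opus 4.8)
The plan is to turn $U(2t)$, cut off on both sides near the energy surface, into a composition of \emph{bounded}-time half-wave propagators interleaved with copies of $\Op_h(\chi)$, split it in the middle, and evaluate the Hilbert--Schmidt norm as a trace in which the two half-compositions combine — via the geodesic convexity of $\{r\le r_2\}$ — into the set of trajectories trapped in $\{r<r_2\}$ for the full time $2t$. To begin, let $X$ be the operator in \eqref{e:propLong}; with $A_2:=\Op_h(\chi^2)$ and $A_1:=\psi_2(-h^2\Delta_g)\Op_h(\chi^2)$ we have $X=A_2U(2t)A_1$ and, by \eqref{e:psi2}, $\WFh(A_1)\subset\{r<r_2\}\cap\{|\xi|_g^2\in(1-\varepsilon_E,1+\varepsilon_E)\}$. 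Since $5\varepsilon_0^{-1}T\le t\le\rho t_e$, write $2t=t_1+\dots+t_L$ with $t_j\in[T,2T]$ and $2\le L\le C\log(1/h)$. The hypotheses on $\chi$ here are exactly \eqref{e:chi-sandwich-1}--\eqref{e:chi-sandwich-2} (with the same $T=\sqrt{r_2^2-r_0^2}$), so Lemma~\ref{l:super-sandwich} gives
\[
X=A_2\,U(t_1)\Op_h(\chi)U(t_2)\cdots\Op_h(\chi)U(t_L)\,A_1+\mathcal O(h^\infty)_{\Psi^{-\infty}}.
\]
Inserting an energy cutoff $\psi_2'(-h^2\Delta_g)$ (with $\psi_2'=1$ near $\supp\psi_2$, $\supp\psi_2'\subset(1-\varepsilon_E,1+\varepsilon_E)$) next to each $\Op_h(\chi)$ is harmless, since $\psi_2'(-h^2\Delta_g)$ commutes with $U$ and $A_1\psi_2'(-h^2\Delta_g)=A_1+\mathcal O(h^\infty)$, and it makes every operator below microlocalized in $\{|\xi|_g^2\in(1-\varepsilon_E,1+\varepsilon_E)\}$.

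Next, split the composition after $U(t_m)$ with $m=\lfloor L/2\rfloor$, so $X=PQ+\mathcal O(h^\infty)$ and $\|X\|_{\HS}^2=\tr(P^*P\cdot QQ^*)+\mathcal O(h^\infty)$; here $\tau_1:=t_1+\dots+t_m$ and $\tau_2:=t_{m+1}+\dots+t_L$ lie in $[t-2T,t+2T]$. The operator $P^*P$ has a layered form in which one alternately conjugates by the \emph{bounded}-time propagators $U(t_j)$ and composes with $\Op_h(\chi)^*\Op_h(\chi)$; applying Lemma~\ref{l:egorov} at each layer and the symbol calculus, and using the flow derivative bounds as in \eqref{e:symbols-good} so that all intermediate symbols lie in a fixed exotic class $S^{\comp}_{h,\nu''}$ with \emph{bounded} seminorms for some $\nu''\in(\nu,1/2)$ (only $O(\log(1/h))$ layers, so the cumulative error stays $\mathcal O(h^\infty)_{\Psi^{-\infty}}$), one obtains $P^*P=\Op_h(\mathsf p)+\mathcal O(h^\infty)$ and likewise $QQ^*=\Op_h(\mathsf q)+\mathcal O(h^\infty)$, where $|\mathsf p|,|\mathsf q|\le C$, the symbol $\mathsf q$ is microlocalized in $\{|\xi|_g^2\in\supp\psi_2\}$, and
\[
\supp\mathsf p\subset\bigcap_{s\in N_+}\varphi_{-s}(\supp\chi),\qquad
\supp\mathsf q\subset\bigcap_{s\in N_-}\varphi_{-s}(\supp\chi)
\]
for nets $N_+\subset[T,\tau_1]$ and $N_-\subset[-\tau_2,-T]$ of step $\le2T$, with $\tau_1\in N_+$ and $-\tau_2\in N_-$.

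The full symbol of $\Op_h(\mathsf p)\Op_h(\mathsf q)$ is supported in $\supp\mathsf p\cap\supp\mathsf q$ and bounded there, so $\tr(\Op_h(\mathsf p)\Op_h(\mathsf q))\le Ch^{-d}\Vol(\supp\mathsf p\cap\supp\mathsf q)$ (this subsumes \eqref{e:hsnorm}). If $(z,\zeta)\in\supp\mathsf p\cap\supp\mathsf q$, then $\varphi_s(z,\zeta)\in\{r<r_2\}$ for every $s$ in the union of the two nets; since $\{r\le r_2\}$ is geodesically convex (by \eqref{e:convexinf-1}) and consecutive points of $N_-\cup N_+$ are at distance $\le2T$ within each net and at distance $t_L+t_m\le4T$ across the origin, it follows that $\varphi_s(z,\zeta)\in\{r\le r_2\}$ for all $s\in[-\tau_2,\tau_1]$ (here \eqref{e:extra-convex} is the convenient form of this convexity). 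As $\tau_1+\tau_2=2t$, this gives $\varphi_{-\tau_2}(z,\zeta)\in\mathcal T_{\{r\le r_2\}}(2t)$; since moreover $(z,\zeta)\in\{|\xi|_g^2\in\supp\psi_2\}$, and $\varphi_{-\tau_2}$ preserves Liouville measure and energy while trapping times are energy-independent, $\Vol(\supp\mathsf p\cap\supp\mathsf q)\le C\,\mathcal V_{\{r\le r_2\}}(2t)\le C\,\mathcal V(2(1-\varepsilon_0)t)$, the last inequality by \eqref{e:B-does-not-matter} and $t\ge5\varepsilon_0^{-1}T$. Combining this with $\|X\|_{\HS}^2=\tr(\Op_h(\mathsf p)\Op_h(\mathsf q))+\mathcal O(h^\infty)$ proves \eqref{e:propLong}.

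The delicate point is the calculus bookkeeping: running Egorov's theorem and the product expansion through $\sim\log(1/h)$ layers while keeping every symbol in one fixed exotic class with bounded seminorms and $\mathcal O(h^\infty)$ cumulative error — which is precisely where the constraint $\rho<2\nu$ is used — and tracking which time interval each dynamical cutoff constrains, so that the two supports, upon meeting in the trace, fill all of $[-\tau_2,\tau_1]$ and convexity of $\{r\le r_2\}$ upgrades this to trapping for the \emph{full} time $2t$. The midpoint split is essential here: it keeps every conjugation time below $\rho t_e$ while still yielding $\mathcal V(2(1-\varepsilon_0)t)$, whereas a one-sided argument, or merely invoking the monotonicity of $\mathcal V$ after splitting, would lose a factor and give only $\mathcal V((1-\varepsilon_0)t)$.
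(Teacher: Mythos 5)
Your argument is correct in its dynamical content and reaches the same bound, but by a genuinely different mechanism than the paper. Both proofs start from Lemma~\ref{l:super-sandwich}; after that the paper never composes the $O(\log(1/h))$ pseudodifferential factors into a single symbol. Instead it telescopes the two halves of the interleaved product against the dynamically refined cutoffs $\widetilde\chi^{\pm}_{jt_0}$, using only bounded-time applications of Lemma~\ref{l:egorov} together with the support relations \eqref{e:stepFlowBetter-1}--\eqref{e:stepFlowBetter-2}. This puts the operator in the form $A_-AA_+$, where the long outer products $A_\pm$ are controlled purely by the $L^2$ operator-norm bound \eqref{e:opernorm}, and all trapping information is carried by the single middle operator $A=\Op_h\big(\widetilde\chi^-_{Lt_0}(\chi\circ\varphi_{-Lt_0})\big)$, whose symbol is a product of just three flowed cutoffs covered by \eqref{e:symbols-good}; then \eqref{e:hsnorm} and \eqref{e:volume-decreasing} finish. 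You instead split at the midpoint, write $\|X\|_{\HS}^2=\tr(P^*P\,QQ^*)$, and bound the trace by $h^{-d}$ times the volume of the intersection of the essential supports of $P^*P$ and $QQ^*$. Your geometric steps are fine: by geodesic convexity of $\{r\le r_2\}$ only the extreme times $-\tau_2,\tau_1$ matter (this same fact is why the paper's three-factor middle symbol suffices), and the passage from $\mathcal V_{\{r\le r_2\}}(2t)$ to $\mathcal V(2(1-\varepsilon_0)t)$ via \eqref{e:B-does-not-matter} and the slack $t\ge 5\varepsilon_0^{-1}T$ is legitimate, indeed a bit more explicit than the paper's last line.

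The one place where your write-up asserts rather than proves is precisely the step the paper's architecture is built to avoid: that $P^*P$ and $QQ^*$ equal $\Op_h(\mathsf p)$, $\Op_h(\mathsf q)$ modulo $\mathcal O(h^\infty)$, with $\mathsf p,\mathsf q$ bounded in a fixed class $S^{\comp}_{h,\nu''}$, $\nu''<1/2$, and with the stated supports, after pushing Egorov and the product calculus through $\sim\log(1/h)$ layers. The claim is believable (the leading symbols are products of at most $L$ flowed cutoffs, whose seminorms only pick up factors $(\log(1/h))^{|\alpha|}$, and each of the log-many steps contributes an error that can be made $\mathcal O(h^N)$ for any fixed $N$), but it needs a genuine induction in which the constants of \eqref{e:bca-2} and Lemma~\ref{l:egorov} are tracked uniformly in the number of layers; no such iterated-composition lemma is among the paper's stated tools, and the paper's own proof deliberately substitutes operator-norm bookkeeping for it. If you want to keep the trace formulation, you could remove this burden by first telescoping each half against $\widetilde\chi^{\pm}_{jt_0}$ as in the paper, so that only one exotic symbol per half ever needs to be quantized. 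Two small cosmetic points: the insertion of $\psi_2'(-h^2\Delta_g)$ next to each $\Op_h(\chi)$ does not commute exactly past $\Op_h(\chi)$ (though it is not really needed, since the $\psi_2$ already inside $A_1$ makes $\mathsf q$, hence the intersection of supports, compact), and the $\mathcal O(h^\infty)$ remainders must be upgraded from $\Psi^{-\infty}$ bounds to Hilbert--Schmidt/trace bounds, which the paper does in one sentence using the compact support of the ranges.
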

\begin{proof}
Fix $t_1$ bounded independently of $h$ and such that
$$
t_1\geq {T\over\varepsilon_0},\quad
L:={t\over t_1}\in\mathbb N,\quad
L\geq 5.
$$
Put $t_0:=t_1-T\geq 0$.
Fix $\psi_3\in C_c^\infty(\mathbb R;[0,1])$ such that
for some $\tilde\varepsilon_E<\varepsilon_E$
$$
\supp\psi_3\subset (1-\varepsilon_E,1+\varepsilon_E),\quad
\supp(1-\psi_3)\cap [1-\tilde\varepsilon_E,1+\tilde\varepsilon_E]=\emptyset,\quad
\supp\psi_2\subset (1-\tilde\varepsilon_E,1+\tilde\varepsilon_E).
$$
Put
$$
\widetilde\chi:=\psi_3(|\xi|_g^2)\chi,\quad
\widetilde\chi^\pm_s:=\widetilde\chi(\chi\circ\varphi_{\mp s}).
$$
Similarly to~\eqref{e:symbols-good},
$\widetilde\chi^\pm_s\in S^{\comp}_{h,\nu}(M)$
for $|s|\leq \rho t_e$.
Using~\eqref{e:extra-convex}, the proof of~\eqref{e:stepFlow-1}, \eqref{e:stepFlow-2}
gives for all $s\geq 0$
\begin{align}
  \label{e:stepFlowBetter-1}
\varphi_{t_1}(\supp\widetilde\chi^+_s)\cap\supp(\widetilde\chi-\widetilde\chi^+_{s+t_0})&=\emptyset,
\\
  \label{e:stepFlowBetter-2}
\varphi_{-t_1}(\supp\widetilde\chi^-_s)\cap\supp(\widetilde\chi-\widetilde\chi^-_{s+t_0})&=\emptyset.
\end{align}
We have
$\psi_2(-h^2\Delta_g)\Op_h(\chi^2-\widetilde\chi_0^+)=\mathcal O(h^\infty)_{\Psi^{-\infty}}$.
Moreover, since $\psi_2(-h^2\Delta_g)$ commutes with $U(2t)$
$$
\Op_h(\chi^2-\widetilde\chi_0^-)U(2t)\psi_2(-h^2\Delta_g)\Op_h(\widetilde\chi_0^+)=\mathcal O(h^\infty)_{\Psi^{-\infty}}.
$$
It follows that
\begin{equation}
  \label{e:beaver-1}
\Op_h(\chi^2)U(2t)\psi_2(-h^2\Delta_g)\Op_h(\chi^2)
=\Op_h(\widetilde\chi_0^-)U(2t)\psi_2(-h^2\Delta_g)\Op_h(\widetilde\chi_0^+)
+\mathcal O(h^\infty)_{\Psi^{-\infty}}.
\end{equation}
From~\eqref{e:beaver-1} and Lemma~\ref{l:super-sandwich}
(taking $\tilde\varepsilon_E$ in place of $\varepsilon_E$) we get
\begin{equation}
  \label{e:beaver-2}
\begin{gathered}
\Op_h(\chi^2)U(2t)\psi_2(-h^2\Delta_g)\Op_h(\chi^2)\\
=
\Op_h(\widetilde\chi_0^-)U(t_1)\big(\Op_h(\widetilde\chi)U(t_1)\big)^{2L-1}
\psi_2(-h^2\Delta_g)\Op_h(\widetilde\chi_0^+)
+\mathcal O(h^\infty)_{\Psi^{-\infty}}.
\end{gathered}
\end{equation}
We next transform the right-hand side of~\eqref{e:beaver-2} into an expression
involving the cutoffs $\widetilde\chi_{t}^\pm$. First of all, we claim that
\begin{equation}
  \label{e:badger-1}
\Big(\big(\Op_h(\widetilde\chi)U(t_1)\big)^L
-\Op_h(\widetilde\chi_{Lt_0}^+)U(t_1)
\cdots\Op_h(\widetilde\chi_{t_0}^+)
U(t_1)\Big)\psi_2(-h^2\Delta_g)\Op_h(\widetilde\chi_0^+)
=\mathcal O(h^\infty)_{\Psi^{-\infty}}.
\end{equation}
Indeed, the left-hand side of~\eqref{e:badger-1}
is equal to $\sum_{\ell=1}^L B^+_\ell$ where
$$
B^+_\ell:=\big(\Op_h(\widetilde\chi)U(t_1)\big)^{L-\ell}
\Op_h(\widetilde\chi-\widetilde\chi_{\ell t_0}^+)U(t_1)
\Op_h(\widetilde\chi_{(\ell-1)t_0}^+)U(t_1)
\cdots \Op_h(\widetilde\chi_{t_0}^+)U(t_1)\psi_2(-h^2\Delta_g)\Op_h(\widetilde\chi_0^+),
$$
in particular
$B^+_1=\big(\Op_h(\widetilde\chi)U(t_1)\big)^{L-1}\Op_h(\widetilde\chi-\widetilde\chi_{t_0}^+)U(t_1)\psi_2(-h^2\Delta_g)\Op_h(\widetilde\chi_0^+)$.
By Lemma~\ref{l:egorov} and~\eqref{e:stepFlowBetter-1}
with $s:=(\ell-1)t_0$ we have
$$
\Op_h(\widetilde\chi-\widetilde\chi_{\ell t_0}^+)U(t_1)
\Op_h(\widetilde\chi_{(\ell-1)t_0}^+)U(-t_1)=\mathcal O(h^\infty)_{\Psi^{-\infty}}
$$
for $\ell=2,\dots,L$ and a similar argument with $s:=0$ gives
$$
\Op_h(\widetilde\chi-\widetilde\chi_{t_0}^+)U(t_1)
\psi_2(-h^2\Delta_g)\Op_h(\widetilde\chi_{0}^+)U(-t_1)=\mathcal O(h^\infty)_{\Psi^{-\infty}}.
$$
Therefore $B^+_\ell=\mathcal O(h^\infty)_{\Psi^{-\infty}}$ and~\eqref{e:badger-1} follows.

We next claim that
\begin{equation}
  \label{e:badger-2}
\begin{aligned}
\Op_h(\widetilde\chi_0^-)U(t_1)\big(\Op_h(\widetilde\chi)U(t_1)\big)^{L-1}
\Op_h(\widetilde\chi_{Lt_0}^+)&\\
-\Op_h(\widetilde\chi_0^-)U(t_1)\cdots\Op_h(\widetilde\chi_{(L-1)t_0}^-)
U(t_1)\Op_h\big(\widetilde\chi_{Lt_0}^-(\chi\circ\varphi_{-Lt_0})\big)
&=\mathcal O(h^\infty)_{\Psi^{-\infty}}.
\end{aligned}
\end{equation}
Indeed, the left-hand side of~\eqref{e:badger-2} has the form
$\sum_{\ell=1}^L B^-_\ell$ where
$$
B^-_\ell:=\Op_h(\widetilde\chi^-_0)U(t_1)\cdots
\Op_h(\widetilde\chi^-_{(\ell-1)t_0})U(t_1)
\Op_h(\widetilde\chi-\widetilde\chi^-_{\ell t_0})
U(t_1)\big(\Op_h(\widetilde\chi)U(t_1)\big)^{L-\ell-1}
\Op_h(\widetilde\chi_{Lt_0}^+)
$$
for $\ell=1,\dots,L-1$ and
$$
B^-_L:=\Op_h(\widetilde\chi^-_0)U(t_1)\cdots \Op_h(\widetilde\chi^-_{(L-1)t_0})
U(t_1)\Op_h\big((\widetilde\chi-\widetilde\chi^-_{Lt_0})(\chi\circ\varphi_{-Lt_0})\big).
$$
By Lemma~\ref{l:egorov} and~\eqref{e:stepFlowBetter-2} with $s:=(\ell-1)t_0$,
$\ell=1,\dots,L-1$, we have
$$
U(-t_1)\Op_h(\widetilde\chi^-_{(\ell-1)t_0})U(t_1)
\Op_h(\widetilde\chi-\widetilde\chi^-_{\ell t_0})=\mathcal O(h^\infty)_{\Psi^{-\infty}}
$$
and a similar argument with $s:=(L-1)t_0$ gives
$$
U(-t_1)\Op_h(\widetilde\chi^-_{(L-1)t_0})
U(t_1)\Op_h\big((\widetilde\chi-\widetilde\chi^-_{Lt_0})(\chi\circ\varphi_{-Lt_0}))=\mathcal O(h^\infty)_{\Psi^{-\infty}}.
$$
Therefore $B_\ell^-=\mathcal O(h^\infty)_{\Psi^{-\infty}}$ and~\eqref{e:badger-2} follows.

Combining~\eqref{e:beaver-2}--\eqref{e:badger-2}, we obtain
$$
\begin{gathered}
\Op_h(\chi^2)U(2t)\psi_2(-h^2\Delta_g)\Op_h(\chi^2)
=
A_- A A_++\mathcal O(h^\infty)_{\Psi^{-\infty}},\\
A_-:=
\Op_h(\widetilde\chi_0^-)U(t_1)
\cdots
\Op_h(\widetilde\chi_{(L-1)t_0}^-)U(t_1),\\
A:=
\Op_h\big(\widetilde\chi^-_{Lt_0}(\chi\circ\varphi_{-Lt_0})\big),\\
A_+:=U(t_1)\Op_h(\widetilde\chi_{(L-1)t_0}^+)U(t_1)\cdots
\Op_h(\widetilde\chi_{t_0}^+)
U(t_1)\psi_2(-h^2\Delta_g)\Op_h(\widetilde\chi_0^+).
\end{gathered}
$$
In fact the remainder is $\mathcal O(h^\infty)_{\HS}$ since
its range consists of functions supported
in $\{r<r_2\}$.
By~\eqref{e:opernorm} and since $0\leq \widetilde\chi^\pm_s\leq 1$,
we have as $h\to 0$
$$
\|A_\pm\|_{L^2\to L^2}=\mathcal O(1).
$$
Therefore
\begin{equation}
  \label{e:beaver-hs-1}
\|\Op_h(\chi^2)U(2t)\psi_2(-h^2\Delta_g)\Op_h(\chi^2)\|_{\HS}
\leq C\|A\|_{\HS}+\mathcal O(h^\infty).
\end{equation}
Finally, we have by~\eqref{e:hsnorm}
$$
\|A\|_{\HS}^2\leq Ch^{-d}\Vol\big(\supp\widetilde\chi\cap \varphi_{Lt_0}(\supp\chi)
\cap\varphi_{-Lt_0}(\supp\chi)\big)
\leq Ch^{-d}\mathcal V(2Lt_0)\leq
Ch^{-d}\mathcal V\big(2(1-\varepsilon_0)t\big)
$$
where in the last inequality we use~\eqref{e:volume-decreasing}.
Combined with~\eqref{e:beaver-hs-1} this gives~\eqref{e:propLong}.
\end{proof}

\subsection{Concentration of measures}

Let $\mathcal E_R\subset L^2(\mathcal B)$ be as in the introduction, in particular
for some constant $c>0$
$$
N_R:=\dim \mathcal E_R=cR^d+o(R^{d}).
$$
Denote by $\mathcal S_R$ the unit sphere in $\mathcal E_R$.
Let $u_R\in\mathcal S_R$ be chosen randomly with respect to the
standard measure on the sphere.
\begin{lemma}
\label{l:probability}
Let $A:\mathcal E_R\to L^2(M)$ be a bounded linear operator
and take $R$ large enough so that $N_R\geq 10$.
Then for all $m\geq 10$,
\begin{equation}
  \label{e:probability}
\mathbb{P}\big(\|Au_R\|_{L^2(M)}> mN_R^{-1/2}\|A\|_{\HS}\big)\leq 2e^{-m^2/16}.
\end{equation}
\end{lemma}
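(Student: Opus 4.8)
The plan is to deduce \eqref{e:probability} from a standard concentration-of-measure inequality on the unit sphere in a real vector space. First I would pass to the real picture: $\mathcal E_R$ is a complex vector space of dimension $N_R$, hence a real vector space of dimension $2N_R$, and the standard probability measure on $\mathcal S_R$ is the uniform measure on the unit sphere $S^{2N_R-1}$ in that real space. On $S^{n-1}\subset\mathbb R^n$ equipped with the geodesic distance and the uniform probability measure, L\'evy's concentration inequality states that for a $1$-Lipschitz function $f$ and its median $M_f$ one has $\mathbb P(|f-M_f|>r)\le 2e^{-nr^2/2}$; I would quote this (e.g.\ from Ledoux's book on concentration of measure, or Milman--Schechtman).

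Next I would apply this to $f(u):=\|Au\|_{L^2(M)}$. The key point is that $f$ is Lipschitz with constant $\|A\|_{\mathcal E_R\to L^2(M)}\le\|A\|_{\HS}$, since $|f(u)-f(u')|\le\|A(u-u')\|\le\|A\|_{\HS}\|u-u'\|\le\|A\|_{\HS}\cdot d_{S}(u,u')$, using that the chordal distance is bounded by the geodesic distance. So the rescaled function $f/\|A\|_{\HS}$ is $1$-Lipschitz on $S^{2N_R-1}$, and L\'evy's inequality with $n=2N_R$ gives
\begin{equation}
\mathbb P\big(|\,\|Au_R\|-M\,|>r\|A\|_{\HS}\big)\le 2e^{-N_R r^2},
\end{equation}
where $M$ is the median of $\|Au_R\|$. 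Then I would control the median by the mean in the $L^2$ sense: $\big(\mathbb E\|Au_R\|^2\big)^{1/2}=N_R^{-1/2}\|A\|_{\HS}$, because for a uniform random unit vector $u_R=\sum$ (real coordinates) one has $\mathbb E|\langle Ae_j,Ae_k\rangle\text{-type terms}|$ collapsing to $\mathbb E\|Au_R\|^2=\tfrac1{N_R}\sum_j\|Ae_j\|^2$ over any orthonormal basis $\{e_j\}$ of the complex space $\mathcal E_R$ (the cross terms vanish by symmetry of the sphere measure). Hence $M\le\sqrt2\,\big(\mathbb E\|Au_R\|^2\big)^{1/2}\le\sqrt2\,N_R^{-1/2}\|A\|_{\HS}$, using Chebyshev to bound the median by $\sqrt2$ times the quadratic mean.

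Finally I would assemble the pieces. Writing $c_0:=\mathbb E\|Au_R\|^2{}^{1/2}=N_R^{-1/2}\|A\|_{\HS}$, the event $\{\|Au_R\|>mN_R^{-1/2}\|A\|_{\HS}\}=\{\|Au_R\|>m c_0\}$ is contained, for $m\ge10$, in $\{\|Au_R\|-M>(m-\sqrt2)c_0\}$, hence in $\{|\,\|Au_R\|-M\,|>(m-\sqrt2)N_R^{-1/2}\|A\|_{\HS}\}$; applying the displayed concentration bound with $r=(m-\sqrt2)N_R^{-1/2}$ gives probability at most $2e^{-(m-\sqrt2)^2}$, and since $m\ge10$ implies $(m-\sqrt2)^2\ge m^2/16$ (indeed $m-\sqrt2\ge m/4$ for $m\ge10$), this is $\le2e^{-m^2/16}$, which is \eqref{e:probability}. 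The only mild subtlety to be careful about is the exact dimension count in L\'evy's inequality ($2N_R$ vs.\ $N_R$) and the median-versus-mean comparison constant; I expect the main (entirely routine) obstacle to be bookkeeping the real/complex dimension factor so that the exponent comes out with the stated constant, which is why the generous slack $m\ge10$ and the $1/16$ in the exponent are built into the statement.
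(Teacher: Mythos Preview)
Your proposal is correct and follows essentially the same approach as the paper: show that $f(u)=\|Au\|$ is $\|A\|_{\HS}$-Lipschitz, apply L\'evy's concentration inequality on the sphere, compute $\mathbb E\|Au_R\|^2=N_R^{-1}\|A\|_{\HS}^2$, and bound the median by a constant times $N_R^{-1/2}\|A\|_{\HS}$. The only cosmetic differences are that the paper bounds $|\mathcal M(f)-\mathbb E f|$ by integrating the concentration tail (yielding $\mathcal M(f)\le 5N_R^{-1/2}\|A\|_{\HS}$) rather than your direct Chebyshev bound $\mathcal M(f)\le\sqrt2\,N_R^{-1/2}\|A\|_{\HS}$, and the paper writes the L\'evy exponent as $(N_R-2)/2$ rather than tracking the real dimension $2N_R$ as you do; neither difference affects the argument.
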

\begin{proof}
Denote by $\mu$ the standard probability measure on $\mathcal S_R$
and let $e_1,\dots,e_{N_R}$ be an orthonormal basis of $\mathcal E_R$.
Consider the function $f(u)=\|Au\|_{L^2(M)}$, $u\in \mathcal S_R$.
We have
\begin{align*}
\mathbb{E}(f(u_R)^2)
&=\int_{\mathcal S_R} \big\la Au_R(a),Au_R(a)\big\ra_{L^2}\, d\mu(a)\\
&=\int_{\mathcal S_R}\sum_{k,j=1}^{N_R}\big\la a_kAe_k,a_jAe_j\big\ra_{L^2}\, d\mu(a)\\
&=\frac{1}{N_R}\sum_{k=1}^{N_R} \|Ae_k\|^2=\frac{1}{N_R}\|A\|_{\HS}^2.
\end{align*}
The function $f$ is Lipschitz continuous; indeed, for $u, v\in \mc{S}_R$ 
$$
\big|\|Au\|_{L^2}-\|Av\|_{L^2}\big|\leq \|A(u-v)\|_{L^2}\leq \|A\|_{\mathcal E_R\to L^2}\cdot\|u-v\|_{\mathcal E_R}
\leq
\|A\|_{\HS}\cdot\|u-v\|_{\mathcal E_R}.
$$
By the Levy concentration of measure theorem \cite[(2.6)]{Led}
\begin{equation}
  \label{e:levy}
\mathbb{P}\big(|f(u_R)-\mc{M}(f)|> \|A\|_{\HS}\cdot\eta\big)\leq 2e^{-(N_R-2)\eta^2/2}
\leq 2e^{-N_R\eta^2/4}
\quad\text{for all }\eta>0
\end{equation}
where $\mc{M}(f)$ is the \emph{median} of $f(u_R)$, namely the unique number with the properties
$$
\mathbb{P}\big( f(u_R)\geq \mc{M}(f)\big)\geq \frac{1}{2},\qquad \mathbb{P}\big(f(u_R)\leq \mc{M}(f)\big)\geq \frac{1}{2}.
$$
We next estimate the difference between $\mc{M}(f)$ and $\mathbb{E}(f(u_R))$. 
By~\eqref{e:levy}
\begin{align*} 
|\mathbb{E}(f(u_R))-\mc{M}(f)|\leq \mathbb E|f(u_R)-\mathcal M(f)|
&=\int_0^\infty \mathbb P\big(|f(u_R)-\mathcal M(f)|> r\big)\,dr
\\
&\leq 2\int_0^\infty\exp\Big(-{N_Rr^2\over 4\|A\|_{\HS}^2}\Big)\,dr
\\
&\leq 4\|A\|_{\HS}N_R^{-1/2}.
\end{align*}
Since $|\mathbb{E}(f(u_R))|\leq \sqrt{\mathbb{E}(f(u_R)^2)}$ by Jensen's inequality, we have
$$
\mathcal M(f)\leq 5\|A\|_{\HS}N_R^{-1/2}.
$$
Using~\eqref{e:levy} with $\eta:=(m-5)N_R^{-1/2}\geq {1\over 2}mN_R^{-1/2}$, we obtain for $m\geq 10$
$$
\mathbb P\big(f(u_R)>mN_R^{-1/2}\|A\|_{\HS}\big)\leq
\mathbb P\big(|f(u_R)-\mathcal M(f)|> \eta\|A\|_{\HS}\big)
\leq 2e^{-m^2/16}
$$
finishing the proof.
\end{proof}

\subsection{Proof of Theorem~\texorpdfstring{\ref{thm:decayOnAverage}}{3}}
  \label{s:proof-of-decay}

Recall from~\eqref{e:the-set-B} that $\mathcal B=\{r\leq r_1\}$ for some $r_1>r_0$.
With $\varepsilon'>0$ the parameter from~\eqref{e:E-R}, fix
$\varepsilon_E>0$ such that
\begin{equation}
  \label{e:game-of-epsilons}
[(1-\varepsilon')^2,(1+\varepsilon')^2]\subset
(1-\varepsilon_E,1+\varepsilon_E)
\end{equation}
and fix $\psi_2\in C_c^\infty(\mathbb R)$ such that
\begin{equation}
  \label{e:psi-2-finalized}
\supp\psi_2\subset (1-\varepsilon_E,1+\varepsilon_E),\quad
\supp(1-\psi_2)\cap [(1-\varepsilon')^2,(1+\varepsilon')^2]=\emptyset.
\end{equation}
Let $\psi\in C_c^\infty(\mathcal B^\circ)$ be chosen in Theorem~\ref{thm:decayOnAverage}.
Without loss of generality we assume that $|\psi|\leq 1$.
We assume that $R$ is large and put
$$
h:=R^{-1}.
$$
We use
the definition~\eqref{e:E-R} of the space $\mathcal E_R$ to show
the following microlocalization statement:
\begin{lemma}
  \label{l:basic-localization}
We have for all $u\in \mathcal E_R$
\begin{equation}
  \label{e:basic-localization}
\|(I-\psi_2(-h^2\Delta_g))\psi u\|_{L^2}=\mathcal O(h^\infty)\|u\|_{L^2}.
\end{equation}
\end{lemma}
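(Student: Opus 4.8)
Here is the plan. By~\eqref{e:psi-2-finalized} the function $1-\psi_2$ is smooth and bounded and vanishes on an open neighborhood of $[(1-\varepsilon')^2,(1+\varepsilon')^2]$. Since $h=R^{-1}$ and $\lambda_k\in R[1-\varepsilon',1+\varepsilon']$ for $k\in I_R$, the numbers $\mu_k:=h^2\lambda_k^2=(h\lambda_k)^2$ lie in $[(1-\varepsilon')^2,(1+\varepsilon')^2]$, hence at a fixed positive distance from $\supp(1-\psi_2)$, and $(-h^2\Delta_{\mathcal B}-\mu_k)e_k=0$. I would prove the mode-by-mode bound: for every $N$ there is $C_N$, independent of $h$ and of $k\in I_R$, with $\|(I-\psi_2(-h^2\Delta_g))(\psi e_k)\|_{L^2}\le C_Nh^N\|e_k\|_{L^2}$. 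Granting this, the lemma follows by linearity applied to $u=\sum_{k\in I_R}a_ke_k$ together with the Cauchy--Schwarz inequality, the ensuing loss $(\#I_R)^{1/2}=\mathcal O(h^{-d/2})$ being harmless since $N$ is arbitrary.

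The mechanism is that $\psi e_k$ is an $\mathcal O(h^\infty)$ quasimode of $-h^2\Delta_g$ at energy $\mu_k$. Since $\supp\psi\subset\mathcal B^\circ$, on $\mathcal B^\circ$ we have $-h^2\Delta_g e_k=\mu_k e_k$, so $(-h^2\Delta_g-\mu_k)(\psi e_k)=[-h^2\Delta_g,\psi]e_k$, a function supported in $\supp d\psi$, a compact subset of $\mathcal B^\circ$. I would then show by induction on $m$ that $(-h^2\Delta_g-\mu_k)^m(\psi e_k)=h^m\sum_{|\alpha|\le 2m}g_{m,\alpha}\,(hD)^\alpha e_k$ for some $g_{m,\alpha}\in C_c^\infty$ supported in a fixed compact subset of $\mathcal B^\circ$, with seminorms bounded uniformly in $h$ for fixed $m$. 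The inductive step uses that the commutators $[-h^2\Delta_g,g]$ and $[-h^2\Delta_g,(hD)^\alpha]$ each carry a factor of $h$, that their coefficients remain supported away from $\partial\mathcal B$, and that $(-h^2\Delta_g-\mu_k)e_k=0$ on $\mathcal B^\circ$ (so no boundary contributions appear). Since in these terms $e_k$ and its $hD$-derivatives are evaluated only on a compact subset of $\mathcal B^\circ$, where $\|(hD)^\alpha e_k\|_{L^2}\le\|e_k\|_{H^{|\alpha|}_h(\mathcal B)}\le C(1+\mu_k)^{|\alpha|/2}\|e_k\|_{L^2}\le C'\|e_k\|_{L^2}$ (using $\mu_k\le(1+\varepsilon')^2$), this yields $\|(-h^2\Delta_g-\mu_k)^m(\psi e_k)\|_{L^2}\le C_mh^m\|e_k\|_{L^2}$.

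Finally, since $1-\psi_2$ vanishes near $\mu_k$, the function $g_{N,k}(\lambda):=(1-\psi_2(\lambda))(\lambda-\mu_k)^{-N}$ is bounded on $\mathbb R$, with $\|g_{N,k}\|_{L^\infty}$ controlled uniformly in $k\in I_R$ (as $\mu_k$ stays at a fixed positive distance from $\supp(1-\psi_2)$), and $(1-\psi_2)(\lambda)=(\lambda-\mu_k)^Ng_{N,k}(\lambda)$; hence by the functional calculus of the self-adjoint operator $-h^2\Delta_g$ we get $(I-\psi_2(-h^2\Delta_g))(\psi e_k)=g_{N,k}(-h^2\Delta_g)(-h^2\Delta_g-\mu_k)^N(\psi e_k)$, so that $\|(I-\psi_2(-h^2\Delta_g))(\psi e_k)\|_{L^2}\le\|g_{N,k}\|_{L^\infty}\,C_Nh^N\|e_k\|_{L^2}$, which together with the first paragraph proves the lemma. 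The step requiring the most care is the induction of the second paragraph --- in particular keeping all coefficients supported inside $\mathcal B^\circ$ so that the Dirichlet eigenequation for $e_k$ can be used freely at every stage --- but this is routine and I do not anticipate a genuine obstacle.
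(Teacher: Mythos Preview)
Your proof is correct and self-contained. It differs from the paper's argument in the mechanism used for the mode-by-mode bound. The paper localizes with an extra cutoff $\psi'\in C_c^\infty(\mathcal B^\circ)$ equal to $1$ near $\supp\psi$, observes that $B:=\psi'(I-\psi_2(-h^2\Delta_g))\psi\in\Psi^0_h(M)$ has $\WFh(B)\subset\{|\xi|_g^2\notin[(1-\varepsilon')^2,(1+\varepsilon')^2]\}$, and then invokes the semiclassical elliptic estimate directly: $-h^2\Delta_g-(h\lambda_k)^2$ is elliptic on $\WFh(B)$ and annihilates $e_k$ in $\mathcal B^\circ$, so $\|Be_k\|=\mathcal O(h^\infty)$. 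Your argument replaces this microlocal step by a hands-on quasimode computation (the induction giving $\|(-h^2\Delta_g-\mu_k)^m(\psi e_k)\|\le C_mh^m$) combined with the Borel functional calculus factorization $(1-\psi_2)(\lambda)=g_{N,k}(\lambda)(\lambda-\mu_k)^N$. This is more elementary in that it avoids the pseudodifferential machinery of~\S\ref{s:funcal}, at the cost of the commutator induction and the interior Sobolev bound for $e_k$ (which you obtain from iterated interior elliptic regularity on nested cutoffs, since the global $H^s$ bound via $\Delta_{\mathcal B}^{s/2}$ would require compatibility conditions at $\partial\mathcal B$). Both routes handle the passage from single modes to $u\in\mathcal E_R$ by Cauchy--Schwarz, absorbing the $(\#I_R)^{1/2}=\mathcal O(h^{-d/2})$ loss into the $h^N$; you make this explicit while the paper leaves it implicit in ``it suffices''.
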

\begin{proof}
Let $\{e_k\}$ be an orthonormal basis of $L^2(\mathcal B)$
with $(-\Delta_{\mathcal B}-\lambda_k^2)e_k=0$. Then it suffices
to show that for each $k$ such that $h\lambda_k\in [1-\varepsilon',1+\varepsilon']$,
we have
$$
\|(I-\psi_2(-h^2\Delta_g))\psi e_k\|_{L^2}=\mathcal O(h^\infty).
$$
Let $\psi'\in C_c^\infty(\mathcal B^\circ)$ satisfy
$\supp\psi\cap\supp(1-\psi')=\emptyset$.
Then $(1-\psi')(I-\psi_2(-h^2\Delta_g))\psi=\mathcal O(h^\infty)_{\Psi^{-\infty}}$,
therefore it suffices to show that
\begin{equation}
  \label{e:basic-localization-2}
\|B e_k\|_{L^2}=\mathcal O(h^\infty),\quad
B:=\psi'(I-\psi_2(-h^2\Delta_g))\psi\in\Psi^0_h(M).
\end{equation}
The Schwartz kernel of $B$ is compactly supported in $\mathcal B^\circ$.
The function $e_k$ solves the equation
$$
(-h^2\Delta_g-(h\lambda_k)^2)e_k=0\quad\text{in }\mathcal B^\circ
$$
and the operator $-h^2\Delta_g-(h\lambda_k)^2\in\Psi^2_h(\mathcal B^\circ)$
is elliptic on $\WFh(B)$ due to~\eqref{e:psi-2-finalized}.
Then~\eqref{e:basic-localization-2} follows from the semiclassical
elliptic estimate, see for instance~\cite[Theorem~E.32]{ZwScat}.
\end{proof}
Let $\chi'\in C_c^\infty(M)$ satisfy~\eqref{e:mes-2}
and fix $r_2>r_1$ such that $\supp\chi'\subset \{r<r_2\}$.
By Lemma~\ref{l:mega-sandwich} combined with~\eqref{e:basic-localization}
we have for all $u\in\mathcal S_R$
\begin{equation}
  \label{e:octopus}
\begin{aligned}
\|\psi U(s+t)\psi u\|_{L^2}
&\leq \|\psi U(s+t)\psi_2(-h^2\Delta_g)\psi u\|_{L^2}+\mathcal O(h^\infty)
\\
&\leq \|\chi'U(t)\psi_2(-h^2\Delta_g)\psi u\|_{L^2}+\mathcal O(h^\infty)
\end{aligned}
\end{equation}
for all $t\in [T_0,C_0h^{-1}]$, $s\in [0,C_0h^{-1}]$, where $T_0:=\sqrt{r_1^2-r_0^2}$.

Using~\eqref{e:octopus} and Lemmas~\ref{l:propLong}--\ref{l:probability}, we now give
\begin{proof}[Proof of Theorem \ref{thm:decayOnAverage}]
With $\varepsilon,\alpha>0$ the parameters
in the statement of Theorem~\ref{thm:decayOnAverage},
take $\varepsilon_0,\rho,\nu$ such that
$$
0<\varepsilon_0<\min\Big(\frac{\e}{4},\alpha,{1\over 10\Lambda_{\max}},{1\over 10}\Big),\quad
{1\over 1+\varepsilon_0}<\rho<2\nu<1.
$$
Let $t_e(R)$ be defined in~\eqref{e:t-e-R}.
Fix a sequence of times
$$
\e_0\log R=t_0<t_1<\cdots <t_L=2\rho t_e(R),\qquad t_i\leq (1+\e_0) t_{i-1},\quad i\geq 1
$$
with the following bound on $L$ (seen by rewriting the inequality above as $\log t_i\leq\log(1+\e_0)+\log t_{i-1}$)
$$
1\leq L\leq 1-{\log(\varepsilon_0\Lambda_{\max})\over\log (1+\varepsilon_0)}.
$$ 
Fix $\chi=\chi(x)\in C_c^\infty(M;[0,1])$ such that
\begin{equation}
  \label{e:snow}
\supp \chi\subset \{r<r_2\},\quad
\supp(1-\chi)\cap \{r\leq r_1\}=\emptyset,\quad
\supp(1-\chi)\cap \supp \chi'=\emptyset.
\end{equation}
We view $\chi$ as a function of $(x,\xi)\in T^*M$
and note that $\chi,\psi_2$ satisfy the assumptions of~\S\ref{s:wave-hs-bound}. Then
Lemma~\ref{l:propLong} (with $t:=t_i/2$) gives for all $i=1,\dots, L$
$$
\|\chi^2 U(t_i) \psi_2(-h^2\Delta_g)\chi^2 \|^2_{\HS} \leq Ch^{-d}\mathcal V\big((1-\varepsilon_0)t_i\big)
$$
where we remove the $\mathcal O(h^\infty)$ remainder by~\eqref{e:volume-nonzero}
using the assumption $K\neq\emptyset$.
Furthermore, $\chi^2\chi'=\chi'$ and $\chi^2\psi=\psi$, so 
\begin{equation} 
\label{e:pine}
\|\chi ' U(t_i)\psi_2(-h^2\Delta_g)\psi \|_{\HS}\leq Ch^{-d/2}\sqrt{\mathcal{V}((1-\e_0)t_i)}.
\end{equation}
Write $t_{L+1}:=C_0R$. Suppose that $t\in [\e_0\log R,C_0R]$. Then there exists $i\geq 0$ so that  $t\in [t_i,t_{i+1}]$.
By~\eqref{e:octopus} with $(t_i,t-t_i)$ taking the role of $(t,s)$
\begin{multline}
  \label{e:gorilla}
\mathbb P\Big[ \|\psi U(t)\psi u_R\|_{L^2}\leq m\sqrt{ \mathcal{V} \big((1-2\varepsilon_0)\min(t,2t_e(R))\big)}\text{ for all }t\in [t_i,t_{i+1}]\Big]\geq 
\\
\mathbb P\Big[ \|\chi' U(t_i)\psi_2(-h^2\Delta_g)\psi u_R\|_{L^2}\leq \frac{m}{2} \sqrt{\mathcal{V} \big((1-2\varepsilon_0)\min(t_{i+1},2t_e(R))\big)}\,\Big]
\end{multline}
where we again use~\eqref{e:volume-nonzero} and the monotonicity~\eqref{e:volume-decreasing} of $\mathcal{V}(t)$
to remove the $\mathcal O(h^\infty)$ error. 
Now, since $t_{i+1}\leq (1+\varepsilon_0) t_{i}$ for $i=0,\dots,L-1$ and $2t_e(R)\leq (1+\e_0)t_L$,
$$
(1-2\e_0)\min(t_{i+1},2t_e(R))\leq 
(1-2\e_0)(1+\e_0)t_i\leq 
(1-\e_0)t_i.
$$
Using \eqref{e:pine} and the monotonicity of $\mathcal{V}(t)$, we have
$$
N_R^{-1/2}\|\chi' U(t_i)\psi_2(-h^2\Delta_g) \psi \|_{\HS}\leq C\sqrt{\mathcal{V}((1-\varepsilon_0)t_i)}\leq C\sqrt{\mathcal{V}\big((1-2\varepsilon_0)\min(t_{i+1},2t_e(R))\big)}.
$$
Lemma~\ref{l:probability} applied to $A:=\chi'U(t_i)\psi_2(-h^2\Delta_g)\psi$ then implies
that there exists $C>0$ such that for all $m\geq C$
$$
 \mathbb P\Big[ \|\chi' U(t_i)\psi_2(-h^2\Delta)\psi u_R\|_{L^2}> \frac{m}{2}\sqrt{ \mathcal{V} \big((1-2\varepsilon_0)\min(t_{i+1},2t_e(R))\big)} \Big]\leq 2e^{-m^2/C}.
$$
Therefore, by~\eqref{e:gorilla}
$$
 \mathbb P\Big[ \|\psi U(t)\psi u_R\|_{L^2}\leq m\sqrt{ \mathcal{V} \big((1-2\varepsilon_0)\min(t,2t_e(R))\big)}\text{ for all }t\in [t_i,t_{i+1}] \Big]\geq 1- 2e^{-m^2/C}.
 $$
Taking an intersection of these events for $i=0,\dots, L$ then gives
$$
 \mathbb P\Big[ \|\psi U(t)\psi u_R\|_{L^2}\leq m\sqrt{ \mathcal{V} \big((1-2\varepsilon_0)\min(t,2t_e(R))\big)}\text{ for all }t\in [\e_0\log R,C_0R] \Big]\geq 1- 4Le^{-m^2/C},
$$
finishing the proof.
\end{proof}

\section{Examples}
\label{s:examples}

\subsection{Manifolds of revolution}

Consider the warped product $M=\re_r \times\mathbb S^{d-1}_\theta$ with metric
$$
g=dr^2+\alpha(r)^2g_0(\theta,d\theta)
$$
where $g_{0}$ is the round metric on the sphere, $\alpha\in C^\infty(\re;\re_+)$, and there exists $C>0$ so that
$$
\alpha(r)=| r|,\quad  |r|>C.
$$  
Then $M$ is a manifold with two Euclidean ends so Theorems~\ref{thm:fractalWeyl} and~\ref{thm:decayOnAverage} apply.
The symbol of the Laplacian is given
$$
p^2=\rho^2+\alpha^{-2}(r)p_0,\quad
p_0:=|\eta|_{g_0(\theta)}^2
$$
where $\rho,\eta$ denote the momenta dual to $r,\theta$. We compute
$$
2pH_p=H_{p^2}=2\rho\partial_r +2\alpha^{-3}(r)\alpha'(r)p_0\partial_{\rho}+\alpha^{-2}(r)H_{p_0}.
$$
Therefore, for a geodesic $(r(t),\theta(t),\rho(t),\eta(t))$, 
$$
\begin{cases}
\dot{r}=p^{-1}\rho\\
\dot{\rho}=p^{-1}\alpha^{-3}(r)\alpha'(r)p_0\\
\dot p_0=0.
\end{cases}
$$
Throughout this section, we assume that
\begin{equation}
\label{e:assume}
\begin{gathered}
\pm \alpha'(r)\geq 0\quad\text{ for }\pm r\geq 0.
\end{gathered}
\end{equation}
Notice that 
\begin{equation}
\label{e:ddot}
\ddot{r}=p^{-2}\alpha^{-3}(r)\alpha'(r)p_0.
\end{equation}
To understand trapping on $M$, we use
\begin{lemma}
\label{l:dynamicsRevolution}
For any geodesic $(r(t),\theta(t),\rho(t),\eta(t))\in \{p=1\}$, we have
for all $t\geq 0$
\begin{align}
  \label{e:DR-1}
\rho(0)r(0)\geq 0&\quad \Longrightarrow\quad |r(t)|\geq |r(0)|+|\rho(0)t|,\\
\rho(0)r(0)\leq 0&\quad \Longrightarrow\quad |r(-t)|\geq |r(0)|+|\rho(0)t|.
\end{align}
\end{lemma}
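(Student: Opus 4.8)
The plan is to reduce first to the forward-time implication and obtain the backward-time one by time reversal. Since $p(x,\xi)=|\xi|_{g(x)}$ is even in $\xi$, the geodesic flow $\varphi_t$ intertwines with the fibrewise sign flip $(x,\xi)\mapsto(x,-\xi)$; hence whenever $(r(t),\theta(t),\rho(t),\eta(t))$ is a geodesic in $\{p=1\}$, so is $(r(-t),\theta(-t),-\rho(-t),-\eta(-t))$, and the latter's initial data satisfies $r(0)\cdot(-\rho(0))\ge 0$ exactly when $r(0)\rho(0)\le 0$. So it suffices to prove: if $r(0)\rho(0)\ge 0$, then $|r(t)|\ge|r(0)|+|\rho(0)|\,t$ for all $t\ge 0$.

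To prove this I would work with the equations of motion on $\{p=1\}$, i.e.\ $\dot r=\rho$ and $\dot\rho=\alpha(r)^{-3}\alpha'(r)\,p_0$ with $p_0\ge 0$ constant along the flow (cf.\ \eqref{e:ddot}), and record the two consequences of \eqref{e:assume} that I will use: $r\,\alpha'(r)\ge 0$ for every $r$, and $\alpha'(0)=0$ (take $r=0$ in \eqref{e:assume}). The key observation is the monotonicity
\[
\frac{d}{dt}\big(r(t)\rho(t)\big)=\rho(t)^2+r(t)\,\alpha(r(t))^{-3}\alpha'(r(t))\,p_0\ \ge\ 0,
\]
which gives $r(t)\rho(t)\ge r(0)\rho(0)\ge 0$ for all $t\ge 0$: the radial momentum never turns the geodesic back toward the neck $\{r=0\}$. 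From here $\frac{d}{dt}(r^2)=2r\rho\ge 0$, so $r(t)^2$ is nondecreasing on $t\ge 0$; and since $\rho(t)\alpha'(r(t))\ge 0$ for $t\ge 0$ --- it equals $(r\rho)(r\alpha'(r))/r^2\ge 0$ when $r\ne 0$ and vanishes when $r=0$ because $\alpha'(0)=0$ --- we also get $\frac{d}{dt}(\rho^2)\ge 0$, hence $|\rho(t)|\ge|\rho(0)|$.

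Finally I would integrate. When $r(0)\ne 0$, monotonicity of $r^2$ keeps $r$ nonzero and of constant sign, so $\frac{d}{dt}|r(t)|=\sgn(r(t))\,\rho(t)=|\rho(t)|\ge|\rho(0)|$ (using $r\rho\ge 0$), and integrating from $0$ gives the claim. When $r(0)=\rho(0)=0$, the energy relation forces $p_0=\alpha(0)^2$ and, by uniqueness for the geodesic ODE, the geodesic is the closed orbit $r\equiv 0$, for which the bound is trivial. When $r(0)=0$ but $\rho(0)\ne 0$, the geodesic leaves the neck immediately and stays on one side, and the same computation yields $\frac{d}{dt}|r(t)|\ge|\rho(0)|$, so integrating from $0$ gives $|r(t)|\ge|\rho(0)|\,t=|r(0)|+|\rho(0)|\,t$. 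The only delicate point, and the step I expect to be the main obstacle, is exactly this behaviour at $\{r=0\}$: there the hypothesis $r(0)\rho(0)\ge 0$ is vacuous and $|r(\cdot)|$ is not differentiable, so one must lean on the monotonicity of $r^2$ and, in the doubly degenerate case, on ODE uniqueness to separate off the trapped orbit $r\equiv 0$.
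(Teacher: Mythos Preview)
Your proof is correct and follows essentially the same idea as the paper: both exploit the convexity inequality $r\,\ddot r=\alpha(r)^{-3}\big(r\alpha'(r)\big)p_0\ge 0$ coming from~\eqref{e:assume}. The paper reduces by symmetry to the quadrant $r(0)\ge 0$, $\rho(0)\ge 0$ and then argues directly that $r(t)\ge 0$ and $\dot r(t)\ge \rho(0)$, whereas you package the same convexity as monotonicity of $r\rho$, $r^2$, and $\rho^2$; this is a cosmetic reorganization rather than a different argument.
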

\begin{proof}
We prove~\eqref{e:DR-1} under the assumption $r(0)\geq 0$, $\rho(0)\geq 0$, with the other cases handled similarly.
By~\eqref{e:assume} and~\eqref{e:ddot}, we have $r(t)\ddot r(t)\geq 0$ for all $t$.
Moreover, $\dot r(0)\geq 0$. This implies that $r(t)\geq 0$ for all $t\geq 0$
and thus $\dot r(t)\geq \dot r(0)=\rho(0)$ for $t\geq 0$. This immediately gives~\eqref{e:DR-1}.
\end{proof}
Denote by $K\subset T^*M\setminus 0$ the trapped set, see~\eqref{e:gamma-def}.
Lemma \ref{l:dynamicsRevolution} implies that
$$
K\subset\{\alpha'(r)=0,\,\rho=0\}.
$$
On the other hand, if $\rho(0)=0$ and $\alpha'(r(0))=0$, then $r\equiv r(0)$ and hence 
\begin{equation}
\label{e:trappedRevolution}
K=\{\alpha'(r)=0,\,\rho=0\}.
\end{equation}

\subsection{Example with cylindrical trapping}
  \label{s:example-1}

We now consider two special examples of manifolds of revolution. First, let $M$ be given as above with (see Figure~\ref{f:examples})
$$
\alpha(r)=\begin{cases}1,&|r|\leq 2;\\
| r|,&| r|\geq 4.
\end{cases}
$$
such that $r\alpha'(r)>0$ when $|r|>2$. Then by \eqref{e:trappedRevolution}, 
$$
K=\{|r|\leq 2,\ \rho=0\}.
$$
We estimate $\mc{V}(t)$ when $t\gg 1$. 
Fix
$$
\mc{B}:=\{|r|\leq 3\}.
$$ 
Since $\dot \rho=0$ for $|r|\leq 2$, we have
$$
\{|r|\leq 1,\ |\rho|\leq p/t\}\subset \mc{T}_{\mc{B}}(t).
$$
On the other hand, suppose that $|\rho(0)|\geq 4p/t$. Then by Lemma~\ref{l:dynamicsRevolution}, 
$$
\max(|r(t)|,|r(-t)|)\geq 4.
$$
Therefore, 
$$
\varphi_{-t}(\mc{T}_{\mc{B}}(2t))\subset \{|r|\leq 3,\ |\rho|\leq 4p/t\}.
$$
In particular, this shows that there exists $C>0$ so that 
$$
C^{-1}t^{-1}\leq \mc{V}(t)\leq Ct^{-1}.
$$

\begin{figure}
\includegraphics{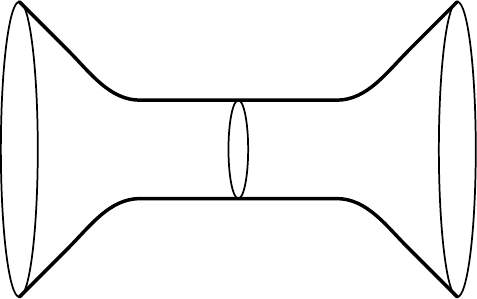}
\qquad\qquad\qquad
\includegraphics{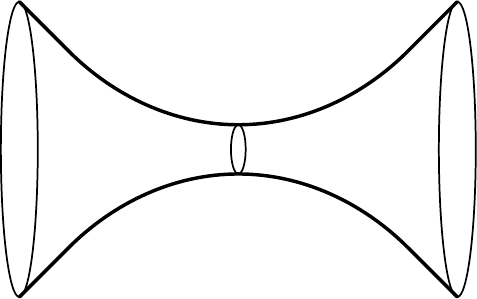}
\caption{Examples of surfaces of revolution studied in~\S\ref{s:example-1} (left)
and~\S\ref{s:example-2} (right).}
\label{f:examples}
\end{figure}

\subsection{Example with degenerate hyperbolic trapping}
  \label{s:example-2}

Next, we study a less degenerate situation. Fix an integer $ n\geq 2$ and let $M$ be given as above with (see Figure~\ref{f:examples})
$$
\alpha(r)=\begin{cases} 1+\displaystyle{r^{2n}\over 2}+\O{}(r^{2n+1}),&|r|\leq 1;\\
|r|,& |r| \geq 4.
\end{cases}
$$
such that $r \alpha'(r)>0$ for $r\neq 0$. Then by \eqref{e:trappedRevolution}
$$
K=\{r=0,\ \rho=0\}.
$$
Fix small  $\tau>0$ to be chosen later and let  
$$
\mc{B}=\{|r|\leq \tau\}.
$$
We consider the flow on $\{p=1\}=S^*M$, so that 
$$
\rho^2=1-\alpha(r)^{-2}p_0\geq 1-p_0.
$$
Recall that $p_0$ is constant on each geodesic.

We henceforth assume that $t\geq 1$.
Observe that if $p_0<1-\tau$,
then $|\rho(0)|>\tau^{1/2}$ and hence 
by Lemma~\ref{l:dynamicsRevolution}
$\max(|r(t)|,|r(-t)|)> \sqrt{\tau}\geq\tau$. Therefore
$$
\varphi_{-t}(\mc{T}_{\mc{B}}(2t))\cap S^*M\subset \{|r|\leq \tau,\ p_0 \geq 1-\tau\}.
$$
By symmetry considerations, to understand the set
$\varphi_{-t}(\mathcal T_{\mathcal B}(2t))\cap S^*M$
it suffices to consider the set of trajectories which satisfy
\begin{equation}
  \label{e:island}
p=1,\quad
p_0\geq 1-\tau,\quad
r(0)\geq 0,\quad
\rho(0)\geq 0,\quad
r(t)\leq \tau.
\end{equation}
\begin{lemma}
  \label{l:stuff}
Under the assumption~\eqref{e:island}, for $\tau>0$ fixed small enough
and large $t$ we have
\begin{gather}
  \label{e:stuff-1}
r(0)\leq Ct^{-{1\over n-1}},\\
  \label{e:stuff-2}
\rho(0)\leq Ct^{-{n\over n-1}}.
\end{gather}
\end{lemma}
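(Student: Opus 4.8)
The plan is to reduce the hypotheses \eqref{e:island} to a single integral inequality for the escape time and then estimate that integral. First I would observe that, since $\alpha'(r)\ge 0$ for $r\ge 0$ and $p_0\ge 0$, on $\{p=1\}$ one has $\ddot r=\alpha(r)^{-3}\alpha'(r)p_0\ge 0$ as long as $r\ge 0$; together with $r(0)\ge 0$, $\dot r(0)=\rho(0)\ge 0$ and the fact that $(r,\rho)=(0,0)$ is the only equilibrium, a continuity argument (as in the proof of Lemma~\ref{l:dynamicsRevolution}) shows that $r(\cdot)$ is nondecreasing on $[0,t]$ with $r(0)\le r(s)\le r(t)\le\tau$ there. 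Hence $\dot r=\rho\ge 0$ on $[0,t]$ and $\rho$ equals the positive root $\rho(r):=\bigl(1-p_0\alpha(r)^{-2}\bigr)^{1/2}$, which — using $\rho(0)^2\ge 0$, i.e.\ $p_0\le\alpha(r(0))^2$, and monotonicity of $\alpha$ — is well defined, continuous, strictly increasing on $[r(0),\tau]$ and positive on $(r(0),\tau]$. Separating variables then yields the key inequality
$$
t\ =\ \int_{r(0)}^{r(t)}\frac{dr}{\rho(r)}\ \le\ \int_{r(0)}^{\tau}\frac{dr}{\rho(r)},
$$
the integral being convergent even at a turning point $\rho(r(0))=0$, as the estimates below show. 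I would also record, from $\alpha(r)=1+\tfrac12 r^{2n}+O(r^{2n+1})$, that after fixing $\tau$ small there are constants with $r^{2n}\lesssim 1-\alpha(r)^{-2}\lesssim r^{2n}$ and $\alpha'(r)\gtrsim r^{2n-1}$ on $[0,\tau]$, and that $p_0\in[1-\tau,\alpha(r(0))^2]\subset[\tfrac12,2]$.

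To prove \eqref{e:stuff-1} I would show $\int_{r(0)}^\tau\rho(r)^{-1}\,dr\lesssim r(0)^{1-n}$; since for $t$ large the inequality above already forces $r(0)$ to be small (otherwise its right side is $O(1)$), we may assume $2r(0)\le\tau$ and split the integral at $2r(0)$. On $[r(0),2r(0)]$ I would use
$$
\rho(r)^2\ \ge\ \rho(r)^2-\rho(r(0))^2\ =\ p_0\bigl(\alpha(r(0))^{-2}-\alpha(r)^{-2}\bigr)\ \gtrsim\ \int_{r(0)}^{r}\alpha'(s)\,ds\ \gtrsim\ r(0)^{2n-1}(r-r(0)),
$$
so that $\int_{r(0)}^{2r(0)}\rho(r)^{-1}\,dr\lesssim r(0)^{-(2n-1)/2}\int_{r(0)}^{2r(0)}(r-r(0))^{-1/2}\,dr\lesssim r(0)^{1-n}$; this square-root-vanishing contribution is exactly what produces the sharp exponent. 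On $[2r(0),\tau]$ I would check $\rho(r)\gtrsim r^{n}$: when $p_0\le1$ this is immediate from $\rho(r)^2\ge p_0\bigl(1-\alpha(r)^{-2}\bigr)\gtrsim r^{2n}$, and when $p_0>1$ one uses $\rho(r(0))^2\ge0$ to get $p_0-1\lesssim r(0)^{2n}\le 2^{-2n}r^{2n}$ (for $\tau$ small the comparison constants are favourable since $r\ge2r(0)$), whence again $\rho(r)^2\gtrsim r^{2n}$; then $\int_{2r(0)}^\tau\rho(r)^{-1}\,dr\lesssim\int_{2r(0)}^\tau r^{-n}\,dr\lesssim r(0)^{1-n}$. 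Adding the two pieces gives $t\lesssim r(0)^{1-n}$, i.e.\ $r(0)\le C\,t^{-1/(n-1)}$.

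For \eqref{e:stuff-2} I would first bound $(1-p_0)_+\lesssim t^{-2n/(n-1)}$. If $p_0\ge1$ there is nothing to prove. If $\delta:=1-p_0>0$, then $\rho(r)^2=\delta+p_0\bigl(1-\alpha(r)^{-2}\bigr)\ge\tfrac12\bigl(\delta+c\,r^{2n}\bigr)$ on all of $[0,\tau]$, so $\rho(r)\gtrsim\sqrt\delta+r^{n}$ and
$$
t\ \le\ \int_{r(0)}^\tau\frac{dr}{\rho(r)}\ \lesssim\ \int_0^\infty\frac{dr}{\sqrt\delta+r^{n}}\ =\ \Bigl(\int_0^\infty\frac{du}{1+u^{n}}\Bigr)\,\delta^{-\frac{n-1}{2n}}\ \lesssim\ \delta^{-\frac{n-1}{2n}},
$$
the finiteness of $\int_0^\infty(1+u^{n})^{-1}\,du$ being where $n\ge2$ is used; rearranging gives $\delta\lesssim t^{-2n/(n-1)}$. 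Finally, from $\rho(0)^2=(1-p_0)+p_0\bigl(1-\alpha(r(0))^{-2}\bigr)\le(1-p_0)_+ +C\,r(0)^{2n}$ together with the two bounds already proved — $(1-p_0)_+\lesssim t^{-2n/(n-1)}$ and $r(0)^{2n}\le\bigl(C\,t^{-1/(n-1)}\bigr)^{2n}=C^{2n}t^{-2n/(n-1)}$ — we obtain $\rho(0)^2\lesssim t^{-2n/(n-1)}$, i.e.\ $\rho(0)\le C\,t^{-n/(n-1)}$.

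The main obstacle is the turning-point analysis near $r=r(0)$: a crude escape-time estimate (one that does not account for $\dot r=\rho$ possibly vanishing at $r(0)$) overshoots the exponent, so the $\int(r-r(0))^{-1/2}$ behaviour must be extracted quantitatively, which is the role of the lower bound $\rho(r)^2\gtrsim r(0)^{2n-1}(r-r(0))$. A secondary nuisance is that $p_0$ may lie marginally above $1$, which is why the argument is phrased through $(1-p_0)_+$ and the companion estimate $p_0-1\lesssim r(0)^{2n}$.
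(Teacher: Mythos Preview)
Your proof is correct and follows essentially the same route as the paper: both reduce to the escape-time integral $t\le\int_{r(0)}^\tau\rho(r)^{-1}\,dr$, extract the turning-point behaviour $\rho(r)^2\gtrsim r^{2n-1}(r-r(0))$ to get $t\lesssim r(0)^{1-n}$, and then split into the cases $p_0\ge1$ and $p_0<1$ to bound $\rho(0)$. The only cosmetic difference is that the paper handles the first integral in a single substitution $r=r(0)u$ (writing $t\lesssim\int_{r(0)}^\tau(r-r(0))^{-1/2}r^{1/2-n}\,dr$) rather than splitting at $2r(0)$, and in the $p_0<1$ case integrates $(\,1-p_0+r^{2n}/4\,)^{-1/2}$ directly instead of passing through $\sqrt\delta+r^n$; the estimates and exponents are identical.
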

\begin{proof}
Note that $0\leq r(0)\leq r(t)\leq\tau$.
Moreover, we have
$\alpha(r(0))\geq \sqrt{p_0}$.
Since $\dot r=\rho=\sqrt{1-\alpha(r)^{-2}p_0}$, we have
$$
t=\int_{r(0)}^{r(t)}\frac{dr}{ \sqrt{1-\alpha(r)^{-2}p_0}}
\leq \int_{r(0)}^\tau \frac{dr}{ \sqrt{1-\alpha(r)^{-2}p_0}}.
$$
Using the inequality $\alpha(r)-\alpha(s)\geq C^{-1}(r-s)r^{2n-1}$,
$0\leq s\leq r\leq \tau$, we have
$$
\begin{aligned}
t&\leq\int_{r(0)}^\tau \Big(1-{\alpha(r(0))^2\over \alpha(r)^2}\Big)^{-1/2}\,dr
\leq C\int_{r(0)}^\tau \big(\alpha(r)-\alpha(r(0))\big)^{-1/2}\,dr\\
&\leq C\int_{r(0)}^\tau (r-r(0))^{-1/2}r^{1/2-n}\,dr
\leq Cr(0)^{1-n}\int_1^\infty (u-1)^{-1/2}u^{1/2-n}\,du
\leq Cr(0)^{1-n}.
\end{aligned}
$$
This implies~\eqref{e:stuff-1}.

Next if $p_0\geq 1$ then 
$$
\rho(0)^2\leq 1-\alpha(r(0))^{-2}\leq C r(0)^{2n}
$$
and in this case~\eqref{e:stuff-1} implies~\eqref{e:stuff-2}.

Finally, consider the case $p_0<1$. Since for $0\leq r\leq \tau$,
$1-\alpha(r)^{-2}p_0\geq 1-p_0+{1\over 4}r^{2n}$, we have
$$
t\leq \int_0^{\tau}\frac{dr}{\sqrt{1-\alpha(r)^{-2}p_0}}
\leq \int_0^\infty {dr\over \sqrt{1-p_0+r^{2n}/4}}.
$$
Making the change of variables $r=(4(1-p_0))^{1\over 2n}u$, we get
$$
t\leq C(1-p_0)^{1-n\over 2n}\int_0^\infty {du\over \sqrt{1+u^{2n}}}
\leq C(1-p_0)^{1-n\over 2n}
$$
which implies
$$
1-p_0\leq Ct^{-{2n\over n-1}}.
$$
We now have by~\eqref{e:stuff-1}
$$
\rho(0)^2=1-\alpha(r(0))^{-2}p_0
\leq 1-p_0+Cr(0)^{2n}
\leq Ct^{-{2n\over n-1}}
$$
which gives~\eqref{e:stuff-2}.
\end{proof}
Applying Lemma~\ref{l:stuff}, we obtain the volume bound
$\mu_L(\varphi_{-t}(S^*M\cap\mathcal T_{\mathcal B}(2t)))\leq Ct^{-{n+1\over n-1}}$
and thus
$$
\mathcal V(t)\leq Ct^{-{n+1\over n-1}}.
$$

\bibliography{references.bib}

\newcommand{\etalchar}[1]{$^{#1}$}
\def\cprime{$'$} \def\cprime{$'$} \def\cprime{$'$}
\begin{thebibliography}{BWP{\etalchar{+}}13}

\bibitem[BL13]{BurqLebeau}
Nicolas Burq and Gilles Lebeau.
\newblock Injections de {S}obolev probabilistes et applications.
\newblock {\em Ann. Sci. \'Ec. Norm. Sup\'er. (4)}, 46(6):917--962, 2013.

\bibitem[Bon01]{Bony}
Jean-Fran{\c c}ois Bony.
\newblock R\'esonances dans des domaines de taille $h$.
\newblock {\em IMRN: International Mathematics Research Notices}, 2001(16):817
  -- 847, 2001.
\newblock URL:
  \url{http://libproxy.mit.edu/login?url=http://search.ebscohost.com/login.aspx?direct=true&db=a9h&AN=45209964&site=ehost-live}.

\bibitem[BWP{\etalchar{+}}13]{ZworskiPRL}
Sonja Barkhofen, Tobias Weich, Alexander Potzuweit, Hans-J\"urgen St\"ockmann,
  Ulrich Kuhl, and Maciej Zworski.
\newblock Experimental observation of the spectral gap in microwave $n$-disk
  systems.
\newblock {\em Phys. Rev. Lett.}, 110:164102, 2013.
\newblock URL: \url{http://link.aps.org/doi/10.1103/PhysRevLett.110.164102},
  \href {http://dx.doi.org/10.1103/PhysRevLett.110.164102}
  {\path{doi:10.1103/PhysRevLett.110.164102}}.

\bibitem[Chr13]{Hans}
Hans Christianson.
\newblock High-frequency resolvent estimates on asymptotically euclidean warped
  products.
\newblock {\em arXiv preprint}, 2013.
\newblock URL: \url{http://arxiv.org/abs/1303.6172}.

\bibitem[CP14]{FernandoEnrique}
Fernando Carneiro and Enrique Pujals.
\newblock Partially hyperbolic geodesic flows.
\newblock {\em Ann. Inst. H. Poincar\'e Anal. Non Lin\'eaire}, 31(5):985--1014,
  2014.
\newblock URL: \url{http://dx.doi.org/10.1016/j.anihpc.2013.07.009}, \href
  {http://dx.doi.org/10.1016/j.anihpc.2013.07.009}
  {\path{doi:10.1016/j.anihpc.2013.07.009}}.

\bibitem[DD13]{DaDy}
Kiril Datchev and Semyon Dyatlov.
\newblock Fractal \uppercase{W}eyl laws for asymptotically hyperbolic
  manifolds.
\newblock {\em Geometric and Functional Analysis}, 23(4):1145--1206, 2013.

\bibitem[DDZ14]{ddz}
Kiril Datchev, Semyon Dyatlov, and Maciej Zworski.
\newblock Sharp polynomial bounds on the number of
  \uppercase{P}ollicott--\uppercase{R}uelle resonances.
\newblock {\em Ergodic Theory and Dynamical Systems}, 34(4):1168--1183, 2014.

\bibitem[DG14]{DyGu}
Semyon Dyatlov and Colin Guillarmou.
\newblock Microlocal limits of plane waves and {E}isenstein functions.
\newblock {\em Ann. Sci. \'Ec. Norm. Sup\'er. (4)}, 47(2):371--448, 2014.

\bibitem[DG16]{DyGu2}
Semyon Dyatlov and Colin Guillarmou.
\newblock Pollicott--\uppercase{R}uelle resonances for open systems.
\newblock {\em Annales Henri Poincar{\'e}}, 17(11):3089--3146, 2016.
\newblock URL: \url{http://dx.doi.org/10.1007/s00023-016-0491-8}, \href
  {http://dx.doi.org/10.1007/s00023-016-0491-8}
  {\path{doi:10.1007/s00023-016-0491-8}}.

\bibitem[DJ17]{DyatlovJin}
Semyon Dyatlov and Long Jin.
\newblock Resonances for open quantum maps and a fractal uncertainty principle.
\newblock {\em Communications in Mathematical Physics}, 354(1):269--316, 2017.

\bibitem[DS99]{d-s}
Mouez Dimassi and Johannes Sj{\"o}strand.
\newblock {\em Spectral asymptotics in the semi-classical limit}, volume 268 of
  {\em London Mathematical Society Lecture Note Series}.
\newblock Cambridge University Press, Cambridge, 1999.
\newblock URL: \url{http://dx.doi.org/10.1017/CBO9780511662195}, \href
  {http://dx.doi.org/10.1017/CBO9780511662195}
  {\path{doi:10.1017/CBO9780511662195}}.

\bibitem[Dya15a]{DyBo}
Semyon Dyatlov.
\newblock Improved fractal \uppercase{W}eyl bounds for hyperbolic manifolds
  (with an appendix by {D}avid {B}orthwick, {S}emyon {D}yatlov, and {T}obias
  {W}eich).
\newblock {\em to appear in J. Europ. Math. Soc.; arXiv preprint}, 2015.
\newblock URL: \url{http://arxiv.org/abs/1512.00836}.

\bibitem[Dya15b]{DyJAMS}
Semyon Dyatlov.
\newblock Resonance projectors and asymptotics for {$r$}-normally hyperbolic
  trapped sets.
\newblock {\em J. Amer. Math. Soc.}, 28(2):311--381, 2015.
\newblock URL: \url{http://dx.doi.org/10.1090/S0894-0347-2014-00822-5}, \href
  {http://dx.doi.org/10.1090/S0894-0347-2014-00822-5}
  {\path{doi:10.1090/S0894-0347-2014-00822-5}}.

\bibitem[Dya16]{DyAIF}
Semyon Dyatlov.
\newblock Spectral gaps for normally hyperbolic trapping.
\newblock {\em Ann. Inst. Fourier (Grenoble)}, 66(1):55--82, 2016.
\newblock URL: \url{http://aif.cedram.org/item?id=AIF_2016__66_1_55_0}.

\bibitem[DZ]{ZwScat}
Semyon Dyatlov and Maciej Zworski.
\newblock {\em Mathematical theory of scattering resonances}.
\newblock URL: \url{http://math.berkeley.edu/~dyatlov/res/}.

\bibitem[DZ16]{DyZeta}
Semyon Dyatlov and Maciej Zworski.
\newblock Dynamical zeta functions for anosov flows via microlocal analysis.
\newblock {\em Ann. Sci. \'Ec. Norm. Sup\'er. (4)}, 49(3):543--577, 2016.

\bibitem[FS11]{Faure-Sjostrand}
Fr{\'e}d{\'e}ric Faure and Johannes Sj{\"o}strand.
\newblock Upper bound on the density of \uppercase{R}uelle resonances for
  \uppercase{A}nosov flows.
\newblock {\em Communications in mathematical physics}, 308(2):325--364, 2011.

\bibitem[FT17]{Faure-Tsujii-FWL}
Fr{\'e}d{\'e}ric Faure and Masato Tsujii.
\newblock Fractal \uppercase{W}eyl law for the ruelle spectrum of
  \uppercase{A}nosov flows.
\newblock {\em arXiv preprint arXiv:1706.09307}, 2017.

\bibitem[GLZ04]{GLZ}
Laurent Guillop{\'e}, Kevin~K Lin, and Maciej Zworski.
\newblock The \uppercase{S}elberg zeta function for convex co-compact schottky
  groups.
\newblock {\em Communications in mathematical physics}, 245(1):149--176, 2004.

\bibitem[H{\"o}r09]{HOV4}
Lars H{\"o}rmander.
\newblock {\em The analysis of linear partial differential operators. {IV}}.
\newblock Classics in Mathematics. Springer-Verlag, Berlin, 2009.
\newblock Fourier integral operators, Reprint of the 1994 edition.
\newblock URL: \url{http://dx.doi.org/10.1007/978-3-642-00136-9}, \href
  {http://dx.doi.org/10.1007/978-3-642-00136-9}
  {\path{doi:10.1007/978-3-642-00136-9}}.

\bibitem[JN16]{JaNa}
Dmitry Jakobson and Fr{\'e}d{\'e}ric Naud.
\newblock Resonances and convex co-compact congruence subgroups of
  $\mathop{\mathrm{psl}}_2(\mathbb z)$.
\newblock {\em Israel J. Math.}, 213(1):443--473, 2016.

\bibitem[Led01]{Led}
Michel Ledoux.
\newblock {\em The concentration of measure phenomenon}, volume~89 of {\em
  Mathematical Surveys and Monographs}.
\newblock American Mathematical Society, Providence, RI, 2001.

\bibitem[LSZ03]{LSZ}
W.~T. Lu, S.~Sridhar, and Maciej Zworski.
\newblock Fractal weyl laws for chaotic open systems.
\newblock {\em Phys. Rev. Lett.}, 91:154101, Oct 2003.
\newblock URL: \url{http://link.aps.org/doi/10.1103/PhysRevLett.91.154101},
  \href {http://dx.doi.org/10.1103/PhysRevLett.91.154101}
  {\path{doi:10.1103/PhysRevLett.91.154101}}.

\bibitem[Nau14]{Na}
Fr{\'e}d{\'e}ric Naud.
\newblock Density and location of resonances for convex co-compact hyperbolic
  surfaces.
\newblock {\em Invent. Math.}, 195(3):723--750, 2014.
\newblock URL: \url{http://dx.doi.org/10.1007/s00222-013-0463-2}, \href
  {http://dx.doi.org/10.1007/s00222-013-0463-2}
  {\path{doi:10.1007/s00222-013-0463-2}}.

\bibitem[Non11]{NoReview}
St{\'e}phane Nonnenmacher.
\newblock Spectral problems in open quantum chaos.
\newblock {\em Nonlinearity}, 24(12):R123, 2011.

\bibitem[NSZ14]{NoSjZw}
St{\'e}phane Nonnenmacher, Johannes Sj{\"o}strand, and Maciej Zworski.
\newblock Fractal {W}eyl law for open quantum chaotic maps.
\newblock {\em Ann. of Math. (2)}, 179(1):179--251, 2014.
\newblock URL: \url{http://dx.doi.org/10.4007/annals.2014.179.1.3}, \href
  {http://dx.doi.org/10.4007/annals.2014.179.1.3}
  {\path{doi:10.4007/annals.2014.179.1.3}}.

\bibitem[PZ99]{PetkovZworski}
Vesselin Petkov and Maciej Zworski.
\newblock Breit--{W}igner approximation and the distribution of resonances.
\newblock {\em Communications in Mathematical Physics}, 204(2):329--351, 1999.
\newblock URL: \url{http://dx.doi.org/10.1007/s002200050648}, \href
  {http://dx.doi.org/10.1007/s002200050648} {\path{doi:10.1007/s002200050648}}.

\bibitem[Sj{\"o}90]{SjostrandFWL}
Johannes Sj{\"o}strand.
\newblock Geometric bounds on the density of resonances for semiclassical
  problems.
\newblock {\em Duke Math. J.}, 60(1):1--57, 02 1990.
\newblock URL: \url{http://dx.doi.org/10.1215/S0012-7094-90-06001-6}, \href
  {http://dx.doi.org/10.1215/S0012-7094-90-06001-6}
  {\path{doi:10.1215/S0012-7094-90-06001-6}}.

\bibitem[Ste03]{Stefanov}
Plamen Stefanov.
\newblock Sharp upper bounds on the number of resonances near the real axis for
  trapping systems.
\newblock {\em American journal of mathematics}, 125(1):183--224, 2003.

\bibitem[SZ91]{SjoDist}
Johannes Sj{\"o}strand and Maciej Zworski.
\newblock Complex scaling and the distribution of scattering poles.
\newblock {\em J. Amer. Math. Soc.}, 4(4):729--769, 1991.
\newblock URL: \url{http://dx.doi.org/10.2307/2939287}, \href
  {http://dx.doi.org/10.2307/2939287} {\path{doi:10.2307/2939287}}.

\bibitem[SZ07]{SjZwFractal}
Johannes Sj{\"o}strand and Maciej Zworski.
\newblock Fractal upper bounds on the density of semiclassical resonances.
\newblock {\em Duke Math. J.}, 137(3):381--459, 2007.
\newblock URL: \url{http://dx.doi.org/10.1215/S0012-7094-07-13731-1}, \href
  {http://dx.doi.org/10.1215/S0012-7094-07-13731-1}
  {\path{doi:10.1215/S0012-7094-07-13731-1}}.

\bibitem[Vas12]{Vasy-AH2}
Andr{\'a}s Vasy.
\newblock Microlocal analysis of asymptotically hyperbolic spaces and high
  energy resolvent estimates.
\newblock In {\em Inverse Problems and Applications. Inside Out II, Gunther
  Uhlmann (ed.)}, volume~60. MSRI publications, Cambridge University Press,
  2012.

\bibitem[Vas13]{Vasy-AH1}
Andr{\'a}s Vasy.
\newblock Microlocal analysis of asymptotically hyperbolic and
  \uppercase{K}err-de \uppercase{S}itter spaces (with an appendix by
  \uppercase{S}emyon \uppercase{D}yatlov).
\newblock {\em Inventiones mathematicae}, 194(2):381--513, 2013.

\bibitem[You90]{YoungDyn}
Lai-Sang Young.
\newblock Large deviations in dynamical systems.
\newblock {\em Trans. Amer. Math. Soc.}, 318(2):525--543, 1990.
\newblock URL: \url{http://dx.doi.org/10.2307/2001318}, \href
  {http://dx.doi.org/10.2307/2001318} {\path{doi:10.2307/2001318}}.

\bibitem[Zwo99a]{Zworski99}
Maciej Zworski.
\newblock Dimension of the limit set and the density of resonances for convex
  co-compact hyperbolic surfaces.
\newblock {\em Invent. Math.}, 136(2):353--409, 1999.
\newblock URL: \url{http://dx.doi.org/10.1007/s002220050313}, \href
  {http://dx.doi.org/10.1007/s002220050313} {\path{doi:10.1007/s002220050313}}.

\bibitem[Zwo99b]{ZworskiRPG}
Maciej Zworski.
\newblock Resonances in physics and geometry.
\newblock {\em Notices Amer. Math. Soc.}, 46(3):319--328, 1999.

\bibitem[Zwo12]{EZB}
Maciej Zworski.
\newblock {\em Semiclassical analysis}, volume 138 of {\em Graduate Studies in
  Mathematics}.
\newblock American Mathematical Society, Providence, RI, 2012.

\bibitem[Zwo17]{ZwoReview}
Maciej Zworski.
\newblock Mathematical study of scattering resonances.
\newblock {\em Bulletin of Mathematical Sciences}, 7(1):1--85, 2017.

\end{thebibliography}
\bibliographystyle{alphaurl}

\end{document}